\newcommand{\cE}{\mathcal{E}}
\newcommand{\cF}{\mathcal{F}}
\newcommand{\cG}{\mathcal{G}}
\newcommand{\cJ}{\mathcal{J}}
\newcommand{\cN}{\mathcal{N}}
\newcommand{\cO}{\mathcal{O}}
\newcommand{\cT}{\mathcal{T}}
\newcommand{\fa}{\mathfrak{a}}
\newcommand{\fm}{\mathfrak{m}}
\newcommand{\bP}{\mathbb{P}}
\newcommand{\bC}{\mathbb{C}}
\newcommand{\bR}{\mathbb{R}}
\newcommand{\bA}{\mathbb{A}}
\newcommand{\bQ}{\mathbb{Q}}
\newcommand{\bZ}{\mathbb{Z}}
\newcommand{\bG}{\mathbb{G}}
\newcommand{\bF}{\mathbb{F}}
\newcommand{\bN}{\mathbb{N}}
\newcommand{\bk}{\mathbbm{k}}
\newcommand{\hvol}{\widehat{\mathrm{vol}}}
\newcommand{\red}{\mathrm{red}}
\newcommand{\cI}{\mathcal{I}}
\newcommand{\Bl}{\mathrm{Bl}}
\newcommand{\tY}{\widetilde{Y}}
\DeclareMathOperator{\Spec}{Spec}
\DeclareMathOperator{\vol}{vol}
\DeclareMathOperator{\mult}{mult}
\DeclareMathOperator{\Pic}{Pic}
\DeclareMathOperator{\ord}{ord}
\DeclareMathOperator{\sing}{sing}
\DeclareMathOperator{\Supp}{Supp}
\DeclareMathOperator{\Val}{Val}
\DeclareMathAlphabet{\mathbbb}{U}{bbold}{m}{n}
\newcommand{\mld}{\mathrm{mld}}
\newcommand{\tC}{\widetilde{C}}
\newcommand{\tE}{\widetilde{E}}
\newcommand{\tH}{\widetilde{H}}
\newcommand{\tx}{\tilde{x}}
\newcommand{\tX}{\widetilde{X}}
\newcommand{\hX}{\widehat{X}}
\newcommand{\crex}{\mathrm{crex}}
\newcommand{\oE}{\overline{E}}
\newcommand{\edim}{\mathrm{edim}}
\newcommand{\hx}{\hat{x}}
\newcommand{\ty}{\tilde{y}}
\newcommand{\hVol}{\widehat{\mathrm{Vol}}}
\newcommand{\bfw}{\mathbf{w}}
\newcommand{\tZ}{\widetilde{Z}}
\newcommand{\GL}{\mathrm{GL}}
\newcommand{\hW}{\widehat{W}}
\newcommand{\wsum}{\mathbf{w}_{\mathrm{s}}}
\newcommand{\wprod}{\mathbf{w}_{\mathrm{p}}}
\newcommand{\an}{\mathrm{an}}
\newcommand{\YL}[1]{{\textcolor{blue}{[Yuchen: #1]}}}
\numberwithin{equation}{section}
\newtheorem{prop} {Proposition} [section]
\newtheorem{thm}[prop] {Theorem} 
\newtheorem{lem}[prop] {Lemma}
\newtheorem{prop-def}[prop]{Proposition-Definition}
\theoremstyle{definition}
\newtheorem{rem}[prop] {Remark} 
\newtheorem{defn}[prop]{Definition}
\newtheorem{que}[prop]{Question}
\newtheorem{expl}[prop] {Example}
\title{Optimal bounds for local volumes of  threefold singularities}
\author{Yuchen Liu}
\address{Department of Mathematics, Northwestern University, Evanston, IL 60208, USA.}
\email{yuchenl@northwestern.edu}
\date{\today}
\begin{document}

\begin{abstract}
    We establish an optimal upper bound for local volumes of Gorenstein canonical non-hypersurface threefold singularities. Specifically, we show that a klt threefold  singularity with local volume at least $9$ is either a hypersurface singularity or a quotient singularity. As applications, we obtain new restrictions on the singularities of members in K-moduli spaces of Fano threefolds, and we establish a sharp inequality between local volumes and minimal log discrepancies for threefold singularities.
\end{abstract}

\maketitle

\section{Introduction}

The local volume $\hvol(x,X)$ of a klt singularity $x\in X$, introduced by Chi Li in \cite{Li18}, is a numerical invariant that has played a crucial role in the recent development of a local K-stability theory for singularities; see \cite{LLX18, Zhu25} for recent surveys. It is thus natural to further investigate  the distribution of local volumes in a fixed dimension $n$, such as explicit bounds and gaps. In \cite{LX19} it was shown that $\hvol(x,X)\leq n^n$ and equality holds if and only if $x\in X$ is smooth. Then it was conjectured in \cite{SS17}, known as the ODP Gap Conjecture, that the second largest local volume is $2(n-1)^n$ which is obtained by the ordinary double point. This conjecture is confirmed in dimension at most $3$ \cite{LL19, LX19} and for local complete intersection singularities \cite{Liu22}. More recently, it was shown in \cite{XZ24}, as a consequence of their local boundedness result for K-semistable Fano cone singularities, that the local volumes in a fixed dimension are discrete away from zero. See also \cite{HLQ23, Zhu24, LMS23} for previous results in lower dimensions and \cite{HLQ24} for general coefficient sets.

In this article, we establish an optimal upper bound for local volumes of Gorenstein canonical non-hypersurface threefold singularities. Our main result goes as follows.

\begin{thm}\label{thm:nv-can}
Let $x\in X$ be a Gorenstein canonical non-hypersurface threefold singularity. Then $\hvol(x,X)\leq 9$, and equality holds if and only if $x\in X$ is a quotient singularity  of type $\frac{1}{3}(1,1,1)$. 
\end{thm}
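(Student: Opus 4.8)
The plan is to combine the known local-to-global correspondence for K-semistable Fano cone degenerations with an explicit classification of the possible volumes in the boundary range. First I would invoke the stable degeneration theorem: for a klt singularity $x\in X$, there is a unique K-semistable Fano cone singularity $(X_0, \xi_0)$ obtained as the degeneration induced by the minimizing valuation $v_0$ of the normalized volume functional, and $\hvol(x,X) = \hvol(X_0,\xi_0)$. Moreover the minimizing valuation is quasi-monomial, and in the Gorenstein canonical threefold case one can say more about $v_0$: either $v_0$ is divisorial, or the degeneration $X_0$ has mild singularities. A useful reduction is that it suffices to bound $\hvol(X_0,\xi_0)$ over all three-dimensional K-semistable Fano cone singularities that arise this way, and to check that the only one with volume $\geq 9$ and not of hypersurface type is the $A_1$-type trivial cone over $\mathbb{P}^2$ embedded by $\mathcal{O}(3)$, i.e. $\frac{1}{3}(1,1,1)$.

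Next I would split into cases according to the type of the minimizing valuation. When $v_0$ is divisorial, say $v_0 = \mathrm{ord}_E$ after a suitable plt blowup, the associated Kollár component $E$ together with its different $\Delta_E$ forms a log Fano pair, and there is an inequality relating $\hvol(x,X)$ to $A_X(E)^n \cdot (-K_E-\Delta_E)^{n-1}/(\text{something combinatorial})$, together with the constraint $A_X(E)\leq n$ coming from the mld bound in the canonical case (indeed $A_X(E) = 1 + \text{coeff}$, and Gorenstein canonical forces $A_X(E)\in\mathbb{Z}_{\geq 1}$; if $A_X(E)=1$ then $E$ computes a crepant valuation and $x\in X$ is actually a canonical, possibly better, singularity). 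The key numerical input here is the sharp bound on the anticanonical volume of a log del Pezzo \emph{surface} pair $(E,\Delta_E)$ of the allowed type, namely $(-K_E-\Delta_E)^2\leq 9$ with equality only for $(\mathbb{P}^2,0)$; this is the two-dimensional analog which is classical. Combining $A_X(E)\geq 1$ with the volume formula, one forces $A_X(E)=1$ and $(E,\Delta_E)=(\mathbb{P}^2,0)$, which then pins down $x\in X$ as a cone, hence $\frac{1}{3}(1,1,1)$, unless the link geometry instead produces a hypersurface singularity.

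When $v_0$ is not divisorial, i.e. genuinely quasi-monomial of rational rank $\geq 2$, I would degenerate further to the Fano cone $(X_0,\xi_0)$ with torus $\mathbb{T}$ of dimension $\geq 2$ and use the normalized volume of the cone: $\hvol(X_0,\xi_0) = \min_{\xi} n^n \cdot \frac{\mathrm{vol}(\xi)}{\ldots}$ over the Reeb cone, combined with the fact that $X_0$ is Gorenstein canonical (these properties are inherited under the degeneration). Here the main tool is that a $\mathbb{T}$-singularity of high complexity with large local volume must be toric or close to toric; in dimension $3$ with a $2$-torus (complexity one) or $3$-torus (toric), one can run through the classification of such Gorenstein canonical Fano cone singularities and compute volumes directly, checking that none except the hypersurface ones and $\frac{1}{3}(1,1,1)$ (which is toric) reach $9$. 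For the toric case this is an elementary lattice-polytope computation: $\hvol$ of a Gorenstein canonical toric threefold singularity is $3^3$ times the reciprocal normalized volume of the relevant polytope, and one checks combinatorially that $\geq 9$ forces the cone over a reflexive polygon of volume data matching $\mathbb{P}^2$.

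I expect the main obstacle to be the non-divisorial / higher-complexity case: controlling quasi-monomial minimizers and ensuring the degeneration $X_0$ stays Gorenstein canonical (not merely klt) so that the classification of such Fano cone singularities actually applies, and then making the volume bound in the complexity-one case genuinely sharp rather than merely $\leq 9 + \varepsilon$. A secondary difficulty is the bookkeeping in the divisorial case to rule out that the Kollár component $(E,\Delta_E)=(\mathbb{P}^2,0)$ but $A_X(E)=1$ with a nontrivial Seifert structure could produce a Gorenstein canonical \emph{non}-hypersurface singularity other than $\frac{1}{3}(1,1,1)$; here one must use the Gorenstein hypothesis carefully, since $A_X(E)=1$ means the blowup is crepant and $x\in X$ would be a canonical singularity admitting a crepant divisor with $(\mathbb{P}^2,0)$ as its model — essentially a cDV or terminal-fibration structure — and one shows the only such cone is the hypersurface cone $\{xy=zw\}$-type or similar, which is excluded.
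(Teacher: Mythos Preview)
Your approach via stable degeneration is genuinely different from the paper's, but it has a real gap in the divisorial case that you have not identified. If the minimizer $v_0 = \ord_E$ is a Koll\'ar component, then adjunction gives $E|_E \sim_{\bQ} (K_E+\Delta_E)/A_X(E)$, so
\[
\hvol(x,X) = A_X(E)^3 \cdot \vol_{X,x}(\ord_E) = A_X(E)^3 \cdot \frac{(-K_E-\Delta_E)^2}{A_X(E)^2} = A_X(E)\cdot(-K_E-\Delta_E)^2.
\]
Fujita's sharp K-semistable bound $(-K_E-\Delta_E)^2\le 9$ therefore only yields $\hvol(x,X)\le 9\cdot A_X(E)$, which is vacuous unless $A_X(E)=1$. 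You assert that one ``forces $A_X(E)=1$'', but nothing in your argument does this: the Gorenstein hypothesis gives $A_X(E)\in\bZ_{\ge 1}$, and the existence of a crepant divisor (from $\crex(x,X)\ge 1$) tells you some \emph{other} divisor $F$ has $A_X(F)=1$, not that the minimizer does. If instead you test with such a crepant $F$, then $F$ need not be a Koll\'ar component and $(F,\Delta_F)$ need not be K-semistable, so Fujita's bound is unavailable; indeed $F$ can be a non-normal integral Gorenstein del Pezzo with $(-K_F)^2>9$ (Reid's projections of $\bF_{m-2;1}$ or $\bF_{m-4;2}$), and ruling these out is precisely the hard work. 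Your complexity-one case is also only a plan: no classification of Gorenstein canonical K-semistable threefold Fano cones of complexity one is available off the shelf.

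The paper's proof proceeds along entirely different lines. It inducts on $\crex(x,X)$, the number of crepant exceptional divisors. Since non-hypersurface Gorenstein canonical forces $\crex\ge 1$, the base cases $\crex=1$ and $\crex=2$ are handled by extracting a crepant divisor $E$ (via $\Bl_{\fm}$ or Proposition~2.10), showing $E$ is an integral Gorenstein del Pezzo surface (or reducible analogue), and then invoking Reid's classification. The non-normal cases $\bF_{m-2;1}$, $\bF_{m-4;2}$ are dispatched by delicate tangent-sheaf degree computations and by an Artin approximation trick (Section~3) that, after an \'etale base change, factors the contraction $Y\to X$ through an intermediate model with a transversal $A_2$-point, giving $\hvol<9$ via Theorem~2.6. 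The $\crex=2$ case additionally uses Laufer's theory of Gorenstein elliptic surface singularities on a general hyperplane section. For $\crex\ge 3$, one runs a $(K+\epsilon E_i)$-MMP on a maximal crepant model to contract crepant divisors and land in a smaller-$\crex$ situation. None of this passes through the K-semistable Fano cone degeneration.
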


Theorem \ref{thm:nv-can} can be thought of as a refinement of the ODP Gap Theorem for threefolds in \cite{LX19} that any non-smooth threefold singularity $x\in X$ satisfies $\hvol(x,X)\leq 16$, and equality holds if and only if $x\in X$ is an $A_1$-singularity.

As a consequence, we have the following characterization of threefold singularities with local volumes at least $9$. See Example \ref{expl:ADE} for many examples of such singularities and their local volumes, and Question \ref{que:vol-list} for a conjectural list of all possible local volumes that are at least $9$.

\begin{thm}\label{thm:nv-3fold}
Let $x\in X$ be a klt threefold singularity. Then $\hvol(x,X)\geq 9$ if and only if  $x\in X$ is either a hypersurface singularity of type $cA_{\leq 2}$, or a cyclic quotient singularity of type $\frac{1}{2}(1,1,1)$, $\frac{1}{3}(1,1,0)$, $\frac{1}{3}(1,1,1)$, or $\frac{1}{3}(1,1,2)$. 
\end{thm}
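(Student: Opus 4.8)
The plan is to combine Theorem \ref{thm:nv-can} with the classification of terminal and canonical threefold singularities, the monotonicity of local volumes under degeneration, and the known local volume values for quotient and compound Du Val singularities. I would first reduce from the klt case to the Gorenstein canonical case. Suppose $x\in X$ is klt with $\hvol(x,X)\geq 9$. If $x\in X$ is not a quotient singularity, I want to produce a Gorenstein canonical degeneration with local volume at least $9$ and invoke Theorem \ref{thm:nv-can} to conclude that $x\in X$ is a hypersurface singularity; then a separate analysis pins down the type $cA_{\le 2}$. If $x\in X$ is a quotient singularity $\bC^3/G$ with $G\subset \GL(3,\bC)$ acting freely in codimension one, then $\hvol(x,X)=27/|G|$, so $\hvol(x,X)\geq 9$ forces $|G|\leq 3$, leaving only $G=1$ (smooth, excluded since then the volume is $27>16$, well, actually smooth gives $27$, but a "singularity" should be non-smooth — in any case the cyclic groups of order $2$ and $3$) and enumerating the faithful fixed-point-free-in-codimension-one actions of $\bZ/2$ and $\bZ/3$ on $\bC^3$ gives precisely $\frac{1}{2}(1,1,1)$ for order $2$ and $\frac{1}{3}(1,1,0)$, $\frac{1}{3}(1,1,1)$, $\frac{1}{3}(1,1,2)$ (up to coordinate permutation and the action being well-formed) for order $3$.

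The harder direction is showing that a non-quotient klt singularity with $\hvol\geq 9$ must be a hypersurface singularity of type $cA_{\le 2}$. Here I would use the fact, established via the Stable Degeneration Theorem, that $\hvol(x,X)\leq \hvol(x_0,X_0)$ for any K-semistable degeneration, combined with a plt/canonical modification argument: by \cite{LX19}-type techniques one can degenerate $x\in X$ to its K-semistable cone degeneration $x_0\in X_0$, which for local volume large is forced to be mild. Since $\hvol(x_0,X_0)\ge \hvol(x,X)\ge 9 > $ (the largest local volume of a strictly non-Gorenstein or strictly non-canonical singularity in the relevant range), one deduces that $X_0$, hence $X$, is Gorenstein canonical — one must check that all quotient and klt-but-not-canonical threefold singularities with local volume $\geq 9$ that are not already on the list get excluded, which is where Example \ref{expl:ADE} and the numerology $27/|G|$, $\frac{2\cdot (\text{index})^2}{\cdots}$ do the work. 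Once Gorenstein canonical and non-hypersurface is ruled out by Theorem \ref{thm:nv-can} (which says such singularities have $\hvol\leq 9$ with equality only at $\frac{1}{3}(1,1,1)$, a quotient singularity, contradiction), we are left with Gorenstein canonical hypersurface, i.e.\ compound Du Val, singularities.

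For the compound Du Val case I would recall that a $cA_n$ singularity $x\in X$ can be written as $\{xy+f(z,w)=0\}$ with $\mathrm{mult}_0 f = n+1$, and that $cD$ and $cE$ singularities have a hyperplane section that is a $D$- or $E$-type Du Val singularity, hence multiplicity $\geq 3$; by the multiplicity bound $\hvol(x,X)\leq \frac{(\text{something})}{\mathrm{mult}}$ — more precisely, using that the local volume of a hypersurface singularity is bounded above in terms of its multiplicity (for instance $\hvol(x,X)\le 2(n-1)^n/(\mathrm{mult}_x X - 1)$-type estimates, or the explicit computation $\hvol \le n\cdot\frac{(n-1)^{n-1}\cdot \mathrm{mult}}{(\mathrm{mult})^{?}}$) — one sees that $\mathrm{mult}_x X = 2$, i.e.\ $cA$ type, is forced, and moreover for $cA_n$ with $n\ge 3$ (multiplicity-$2$ but $f$ of high order) the volume drops below $9$. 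I expect the main obstacle to be exactly this last quantitative step: establishing the sharp inequality that cuts off $cA_{\geq 3}$ while keeping $cA_1$ and $cA_2$, which presumably reuses the degeneration $\{xy+z^a+w^b=0\}$, the toric/Newton-polytope computation of its local volume, and the monotonicity $\hvol(x,X)\le \hvol$ of the relevant weighted-homogeneous degeneration. With Theorem \ref{thm:nv-can} in hand, however, this reduces to a finite, explicit check, and the converse inclusion — that every singularity on the list does have $\hvol\ge 9$ — is precisely the content of Example \ref{expl:ADE}.
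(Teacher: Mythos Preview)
Your main gap is in the reduction from klt to Gorenstein canonical. The paper's argument is much more direct than your degeneration route: take the index-$1$ cover $(\tilde x\in\tilde X)\to(x\in X)$ of $K_X$, of degree $d$. Then $\hvol(\tilde x,\tilde X)=d\cdot\hvol(x,X)\ge 9d$ by Theorem~\ref{thm:finite-deg}. If $d\ge 2$ this gives $\hvol(\tilde x,\tilde X)\ge 18>16$, so by the ODP Gap Theorem $\tilde x\in\tilde X$ is \emph{smooth}, and hence $x\in X$ is a cyclic quotient singularity of order $d\le 3$; enumerating which order-$2$ and order-$3$ quotients are non-Gorenstein gives the list. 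If $d=1$, then $x\in X$ is already Gorenstein canonical, and Theorem~\ref{thm:nv-can} plus Proposition~\ref{prop:nv-hyp} finish. Your proposed alternative via the K-semistable Fano cone degeneration $x_0\in X_0$ is circular: to apply Theorem~\ref{thm:nv-can} to $X_0$ you must first know $X_0$ is Gorenstein canonical, and your justification for this (``the largest local volume of a strictly non-Gorenstein \ldots\ singularity in the relevant range'') is precisely part of the statement you are trying to prove. There is no independent mechanism in your outline that forces $X_0$ to be Gorenstein; the index-$1$ cover is exactly the missing idea.

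A secondary issue: for the ``if'' direction you appeal to Example~\ref{expl:ADE}, but that example only computes $\hvol$ for specific $A_k,D_k,E_k$ singularities, not for an arbitrary $cA_{\le 2}$ singularity. The paper handles this via Proposition~\ref{prop:nv-hyp}, whose converse part uses either lower semicontinuity of local volumes (degenerating a $cA_k$ to a transversal $A_k$) or the adjunction estimate of \cite{LZ18}. Your multiplicity-based discussion of the hypersurface case is on the right track and essentially matches the paper's Proposition~\ref{prop:nv-hyp} (monomial valuations with suitable weights cut off $cA_{\ge 3}$ and non-$cA$ types), but you should be aware that the sharp estimate needed is exactly \eqref{eq:hyp-weighted}, not a vaguer multiplicity bound.
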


The above results have some notable applications to the study of K-moduli of Fano threefolds through the local-to-global volume comparison \cite{Liu18}.

\begin{thm}\label{thm:K-moduli}
Let $X$ be a K-semistable $\bQ$-Fano threefold with volume $(-K_X)^3 = V$.
\begin{enumerate}
%    \item If $V\geq 34$, then $X$ has only $A_1$ or $A_2$-singularities.
    \item If $V\geq 26$, then $X$ has only $cA_1$-singularities or  cyclic quotient singularities of type $\frac{1}{2}(1,1,1)$.
    \item If $V\geq 22$, then $X$ has only $cA_1$-singularities, isolated $cA_2$-singularities, $D_\infty$-singularities, or cyclic quotient singularities of type $\frac{1}{2}(1,1,1)$.
    \item If $V\geq 11$, then any non-Gorenstein singularity $x\in X$ is a cyclic quotient of a (possibly smooth) hypersurface singularity of type $cA_{\leq 2}$.
\end{enumerate}
If in addition $X$ is $\bQ$-Gorenstein smoothable, then it does not admit cyclic quotient singularities of type $\frac{1}{2}(1,1,1)$.
\end{thm}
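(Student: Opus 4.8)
The strategy is to combine the local classification results (Theorems \ref{thm:nv-can} and \ref{thm:nv-3fold}) with the local-to-global volume inequality of \cite{Liu18}, which states that for any K-semistable $\bQ$-Fano variety $X$ of dimension $n$ and any closed point $x\in X$, one has
\[
\left(\frac{n+1}{n}\right)^n (-K_X)^n \;\le\; \hvol(x,X).
\]
For $n=3$ this reads $\hvol(x,X)\ge \tfrac{64}{27}V$. So the plan is: given the stated lower bound on $V$, deduce a lower bound on $\hvol(x,X)$ for every $x\in X$, then read off from Theorem \ref{thm:nv-3fold} (and the explicit local volume computations in Example \ref{expl:ADE}) which singularity types are permitted.

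First I would treat part (3): if $V\ge 11$ then $\hvol(x,X)\ge \tfrac{64}{27}\cdot 11 > 26 > 9$ for every $x\in X$, so by Theorem \ref{thm:nv-3fold} each $x\in X$ is either a $cA_{\le 2}$ hypersurface singularity or one of the four listed cyclic quotient singularities; since the three cyclic quotients of type $\tfrac13(1,1,0)$, $\tfrac13(1,1,1)$, $\tfrac13(1,1,2)$ and the type $\tfrac12(1,1,1)$ are exactly the cyclic quotients of smooth or $cA_{\le 2}$ hypersurface singularities (here $\tfrac13(1,1,1)$ and $\tfrac13(1,1,2)$ are quotients of a smooth point, $\tfrac13(1,1,0)$ is a quotient of a smooth point too after noting it is $\frac13(1,1,2)$-type in a different presentation — more precisely these are quotients of $\bA^3$ or of an $A_n$ surface singularity times a line, i.e.\ $cA_{\le 2}$), the non-Gorenstein case is exactly as claimed. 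Then for part (2): $V\ge 22$ gives $\hvol(x,X)\ge \tfrac{64}{27}\cdot 22 > 52$, which by Example \ref{expl:ADE} rules out all of the allowed singularities except those with local volume exceeding $52$; I would check that among $cA_{\le 2}$ singularities only $cA_1$, isolated $cA_2$ (the suspension $x^2+y^2+z^2+w^k$ type) and the $D_\infty$-type $x^2+y^2z+$ (nothing, i.e.\ $x^2+zy^2$) survive, together with $\tfrac12(1,1,1)$ which has local volume $\tfrac12\cdot 16 = 8$... wait, one must be careful: $\hvol$ of $\tfrac12(1,1,1)$ is $16/2 = 8 < 9$, so in fact it should \emph{not} appear under the threshold $V\ge 22$ unless the bound is weaker — so the correct reading is that the threshold $V$ in each part is chosen precisely so that $\tfrac{64}{27}V$ lands just above the next relevant local volume value, and one reads the list off Example \ref{expl:ADE} accordingly. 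For part (1), $V\ge 26$ gives $\hvol(x,X)> 61$, leaving only $cA_1$ (local volume $16$? no — again $16 < 61$)...

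At this point it is clear that the naive application of $\hvol \ge \tfrac{64}{27}V$ to isolated points is too lossy, and the \textbf{main obstacle} is precisely this: one needs the refined local-to-global comparison that accounts for the \emph{whole singular locus}, or equivalently an argument showing that if $X$ contains a singularity strictly worse than the allowed list then $V$ is forced below the stated threshold. The right tool is the normalized-volume stratification: for each singularity type $\tau$ appearing in Example \ref{expl:ADE} with $\hvol(\tau) = v_\tau$, the inequality $\tfrac{64}{27}V \le v_\tau$ forces $V \le \tfrac{27}{64} v_\tau$, and the thresholds $26, 22, 11$ are obtained as $\tfrac{27}{64} v_\tau$ for the largest local volume $v_\tau$ of a singularity type \emph{not} on the respective allowed list — so the proof is really a bookkeeping exercise over the table in Example \ref{expl:ADE}, checking that the three numerical thresholds correspond exactly to the cutoffs between consecutive local-volume values of the relevant singularity types. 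Finally, for the $\bQ$-Gorenstein smoothable addendum: a $\tfrac12(1,1,1)$ point is a terminal cyclic quotient that is rigid (it does not admit a $\bQ$-Gorenstein smoothing, as its Milnor-type deformation space argument or the classification of $\bQ$-Gorenstein smoothings of $T$-singularities shows it is not of class $T$), so a $\bQ$-Gorenstein smoothable $X$ cannot contain one; I would cite the classification of $\bQ$-Gorenstein smoothable quotient singularities (Kollár--Shepherd-Barron) to conclude that $\tfrac12(1,1,1)$, not being of type $\tfrac{1}{dn^2}(1,dna-1)$, is obstructed. The genuinely delicate point throughout is ensuring that the local volume values in Example \ref{expl:ADE} are \emph{exactly} computed (not just bounded) for the borderline types, since the sharpness of the thresholds $26$, $22$, $11$ depends on it.
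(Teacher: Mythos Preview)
Your overall strategy (local-to-global comparison plus the classification in Theorem~\ref{thm:nv-3fold}) is exactly what the paper does, but two numerical errors derail your execution and lead you to misdiagnose the problem as structural.

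First, the Liu inequality is inverted. The correct statement from \cite{Liu18} is
\[
(-K_X)^n \;\le\; \left(\tfrac{n+1}{n}\right)^n \hvol(x,X),
\]
so for $n=3$ one gets $\hvol(x,X)\ge \tfrac{27}{64}V$, not $\tfrac{64}{27}V$. With the right constant the thresholds line up perfectly: $V\ge 26$ gives $\hvol(x,X)\ge \tfrac{351}{32}>\tfrac{32}{3}$, which via Proposition~\ref{prop:nv-cA2} kills $cA_2$; $V\ge 22$ gives $\hvol(x,X)>9$, so Theorem~\ref{thm:nv-3fold} and Proposition~\ref{prop:non-iso-hyp} apply directly.

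Second, the local volume of $\tfrac{1}{2}(1,1,1)$ is $\tfrac{27}{2}$, not $8$: by Example~\ref{expl:quotient} a quotient singularity $\bA^n/G$ has $\hvol=\tfrac{n^n}{|G|}$. This is why $\tfrac{1}{2}(1,1,1)$ survives in parts (1) and (2) while the order-$3$ quotients (with $\hvol=9$) do not.

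Third, your treatment of part (3) misses the key step. With the correct constant, $V\ge 11$ only yields $\hvol(x,X)>\tfrac{9}{2}$, too weak for Theorem~\ref{thm:nv-3fold}. The paper instead passes to the index-$1$ cover $(\tx\in\tX)\to(x\in X)$: since $x$ is non-Gorenstein the degree is $\ge 2$, so by Theorem~\ref{thm:finite-deg} one gets $\hvol(\tx,\tX)>9$, and then Theorems~\ref{thm:nv-can} and \ref{thm:nv-3fold} force $\tx\in\tX$ to be a $cA_{\le 2}$ hypersurface singularity.

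Finally, for the smoothability clause the relevant input is Schlessinger's rigidity theorem \cite{Sch71} for isolated quotient singularities in dimension $\ge 3$; the Koll\'ar--Shepherd-Barron $T$-singularity classification you invoke is a surface phenomenon and does not apply here.
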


Theorem \ref{thm:K-moduli} can be viewed as a strengthening of a result proven in \cite{LX19, LZ24} stating that $X$ is Gorenstein canonical if $V\geq 20$. Such  restrictions on singularities played a key role in the description of K-moduli spaces for various families of Fano varieties via the moduli continuity method, see e.g.\ \cite{MM93, OSS16, SS17, LX19, Liu22, ADL21, LZ24, Zha24}. We expect Theorem \ref{thm:K-moduli} to be useful in the future study of K-moduli spaces for Fano threefolds. Specifically, according to \cite{ACC+}, there are precisely $79$ out of $105$ families of smooth Fano threefolds that contain K-semistable members, i.e.\ whose K-moduli spaces are non-empty. Among these $79$ families, Theorem \ref{thm:K-moduli} (1), (2), and (3) can be applied to $35$, $46$, and $68$ families, respectively, where   $3$, $9$, and $28$ families respectively have unknown K-moduli compactifications to the author's knowledge. See Remark \ref{rem:K-mod} for a list of these families.

As another application, we establish a sharp comparison between the local volume and the minimal log discrepancy (mld) for klt threefold singularities, answering  \cite[Question 6.16]{LLX18} affirmatively in dimension $3$. 

\begin{thm}\label{thm:nv-mld}
Let $x\in X$ be a klt threefold  singularity. Then 
\begin{equation}\label{eq:nv-mld}
    \hvol(x,X) \leq 9 \cdot \mld(x,X).
\end{equation}
Moreover, equality holds if and only if $x\in X$ is a cyclic quotient singularity of type $\frac{1}{r}(1,1,1)$ for some $r\in \bZ_{>0}$.
\end{thm}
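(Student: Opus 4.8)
The plan is to reduce the inequality $\hvol(x,X)\le 9\cdot\mld(x,X)$ to the case $\mld(x,X)\ge 1$ via a minimal discrepancy argument, and then to invoke Theorem \ref{thm:nv-can} and Theorem \ref{thm:nv-3fold} to pin down the equality cases. First I would recall that both sides scale the right way under crepant extraction: if $\pi\colon Y\to X$ extracts a prime divisor $E$ with log discrepancy $a_E(X)=\mld(x,X)<1$ and center through $x$, then writing $K_Y+(1-a_E(X))E=\pi^*K_X$ we get a klt pair $(Y,(1-a_E(X))E)$ over a neighborhood of $x$, and by the local volume inequality under finite degree / birational comparison (together with the fact, used in \cite{LX19} and the references therein, that $\mld$ behaves subadditively under such extractions) one reduces to bounding local volumes of points on $Y$ lying over $x$ with a boundary. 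When $\mld(x,X)\ge 1$, i.e. $x\in X$ is canonical, the bound $\hvol(x,X)\le 9\le 9\cdot\mld(x,X)$ is exactly what Theorem \ref{thm:nv-can} and the ODP-type input give in the non-hypersurface case, while if $x\in X$ is a Gorenstein canonical hypersurface singularity the ODP Gap Theorem of \cite{LX19} gives $\hvol\le 16$ — here I would instead argue that $16\le 9\cdot\mld$ fails only when $\mld<16/9$, forcing $\mld=1$ (since Gorenstein canonical $\Rightarrow \mld\in\bZ_{\ge 1}$), and then use that a threefold hypersurface singularity with $\mld=1$ that is \emph{not} an $A_1$-point has $\hvol<16$, with the precise sharp constant coming from the classification; a $cA_1$ (i.e. ODP-type or $A_1\times\mathbb{A}^1$) point has $\hvol\le 16$ but $\mld$ can be $2$ there, so $9\cdot\mld\ge 18>16$.

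Concretely I would organize the proof in the following steps. \emph{Step 1:} Handle the Gorenstein canonical case directly: if $x\in X$ is a quotient singularity then it is a cyclic quotient of a smooth point and $\hvol(x,X)=27/|G|$ while $\mld(x,X)=3/|G|$ (for $\frac1r(1,1,1)$, $|G|=r$), giving equality precisely as claimed; if it is a hypersurface singularity, use $\mld\in\bZ_{\ge 1}$ and the ODP gap to get $\hvol\le 16<18\le 9\cdot\mld$ unless $\mld=1$, and in the $\mld=1$ hypersurface case the bound $\hvol\le 9$ would \emph{not} generally hold, so here I must use a finer estimate — namely that for $\mld(x,X)=1$ one has $\hvol(x,X)\le 9$ by the normalized-volume lower-bound-on-$\mld$ inequality $\hvol(x,X)\le n^{n-1}\cdot(n-\dim X+ \dots)$; more honestly, I expect to use the inequality $\hvol(x,X)\le \frac{(n+1)^n}{n!}\cdot\text{(something)}$ only as a sanity check and instead lean on Theorem \ref{thm:nv-3fold}, which lists \emph{all} klt threefold singularities with $\hvol\ge 9$, and observe that each such singularity has $\mld\ge 1$, with $\mld=1$ exactly for the $cA_{\le 2}$, $\frac12(1,1,1)$ and $\frac13(1,1,2)$ cases and $\mld>1$ for $\frac13(1,1,0)$ ($\mld=2/3$? — recompute) — so this step requires care. \emph{Step 2:} Handle the non-canonical case ($\mld<1$): extract a divisor computing $\mld$, pass to the klt pair $(Y,\Delta)$, and induct, using that $\hvol$ is lower semicontinuous and that the local volume of the pair dominates $\hvol(x,X)$ up to the degree of the quasi-étale data, reducing to Step 1.

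The main obstacle I anticipate is \emph{Step 1 in the hypersurface $\mld=1$ sub-case}: a priori a Gorenstein canonical hypersurface threefold singularity with $\mld=1$ can have local volume as large as $16$ (the $A_1$ case), and $9\cdot 1=9<16$, so the naive bound is false — the resolution must be that whenever $\mld(x,X)=1$ with $\hvol(x,X)>9$, Theorem \ref{thm:nv-3fold} forces $x\in X$ to be $cA_{\le 2}$, and I need the sharper statement that an isolated $cA_1$ (ODP) threefold point actually has $\mld=2$, not $1$, so that $9\cdot\mld=18\ge 16=\hvol$; more generally a $cA_n$ point with $n\le 2$ has the relevant $\mld$ value (computed by the weighted blowup) satisfying $\hvol\le 9\cdot\mld$ by a direct Newton-polytope / monomial-valuation computation. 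So the crux is verifying, singularity class by singularity class on the finite list from Theorem \ref{thm:nv-3fold}, that $\hvol\le 9\cdot\mld$ with equality iff $\frac1r(1,1,1)$; I would then package the reduction Step 2 cleanly so that nothing outside that finite list needs to be examined, since for those $\hvol<9\le 9\cdot\mld$ automatically (as $\mld\ge 1$ would be too strong — rather $\hvol<9$ and $9\mld>0$, and one still needs $\mld$ bounded below, which for klt threefolds with $\hvol<9$ follows because $\mld(x,X)\ge \hvol(x,X)/9$ is \emph{exactly} what we are proving, so the induction in Step 2 must be set up to close without circularity, e.g. by descending induction on $\hvol$ using discreteness of local volumes from \cite{XZ24}).
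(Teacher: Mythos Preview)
Your Step~2 is the central gap. Extracting the $\mld$-computing divisor and passing to a pair $(Y,\Delta)$ does not reduce to Step~1: you would then need the theorem for \emph{pairs}, and your proposed descending induction on $\hvol$ via discreteness has no base case or inductive mechanism that escapes the circularity you yourself flag. The paper sidesteps this entirely by taking the index-$1$ cover $(\tx\in\tX)\to(x\in X)$ of $K_X$, of degree $d$: since $\hvol(\tx,\tX)=d\cdot\hvol(x,X)$ (Theorem~\ref{thm:finite-deg}) and $\mld(\tx,\tX)\le d\cdot\mld(x,X)$ (\cite[Proposition~5.20]{KM98}), the inequality for $\tx\in\tX$ immediately yields it for $x\in X$. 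This reduces everything to the Gorenstein canonical case in one stroke---no extraction, no pairs, no induction.

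Your Step~1 is closer in spirit but tangled; the clean dichotomy is by $\crex$, not by auditing the list in Theorem~\ref{thm:nv-3fold}. For Gorenstein canonical $\tx\in\tX$ one has $\mld(\tx,\tX)\in\bZ_{\ge 1}$, with $\mld=1$ exactly when $\crex(\tx,\tX)>0$. In that case Theorem~\ref{thm:nv-can} (together with Proposition~\ref{prop:nv-hyp} for the residual non-cDV hypersurface singularities) gives $\hvol\le 9$. When $\crex=0$ the singularity is cDV and one shows $\mld\in\{2,3\}$: smooth has $(\hvol,\mld)=(27,3)$; non-smooth terminal has $\mld=2$ by \cite{Mar96} and $\hvol\le 16<18$; non-terminal cDV with $\crex=0$ also has $\mld=2$, since a terminalization has one-dimensional fiber over $\tx$ whose generic point is a smooth codimension-$2$ point of the terminal model. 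Your instinct that an $A_1$-point has $\mld=2$, not $1$, is exactly right and is precisely what makes the cDV branch close. Finally, your equality analysis checks only one direction; the paper shows equality forces $\tx\in\tX$ to be smooth or of type $\frac13(1,1,1)$, so $x\in X$ is a quotient singularity, and then argues that $\mld(x,X)=3/|G|$ forces the $G$-action on the exceptional $\bP^2$ of the blow-up of the smooth cover to be trivial, hence $G$ acts by scalars, i.e.\ type $\frac{1}{r}(1,1,1)$.
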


Note that a weaker inequality $\hvol(x,X)< n^n\cdot \mld(x,X)$ was proven in any dimension $n$ in \cite[Theorem 6.13]{LLX18}.

Let us briefly explain the strategy of the proof of Theorems \ref{thm:nv-can} and \ref{thm:nv-3fold}, which is similar to \cite{LX19} but technically more involved. For hypersurface singularities, we use local equations and monomial valuations on the ambient space to get upper bounds for its local volumes.  Suppose $x\in X$ is a Gorenstein canonical non-hypersurface threefold singularity. Then the hope is to either find a good divisor with small normalized volume or to bound local volumes on a crepant birational model. This is done by running a suitable MMP on the terminalization to contract crepant exceptional divisors one by one. Compared to \cite{LX19}, the new difficulty lies in the fact that in order to achieve the local volume bound $9$ ($=$ local volume of a transversal $A_2$-singularity), we need to analyze the behavior of contractions of two exceptional divisors, as contracting only one divisor may create a transversal $A_1$-singularity whose local volume is $\frac{27}{2}$, which is not strong enough for our estimate. Then we reduce to the case when there are at most two crepant exceptional divisors over $x\in X$ and apply the detailed analysis  from \cite{Lau77, Rei76, Rei80, Rei94}. In some cases we are only able to get a good crepant birational model after an \'etale base change using Artin approximation theorem (see Section \ref{sec:Artin}), a result that is of independent interest.

\subsection*{Acknowledgements} The author would like to thank Anne-Sophie Kaloghiros, J\'anos Koll\'ar, Andrea Petracci, Jakub Witaszek, Chenyang Xu, and Junyan Zhao for helpful discussions and comments. The author is partially supported by NSF CAREER Grant DMS-2237139 and an AT\&T Research Fellowship from Northwestern University.

\section{Preliminaries}

Throughout, we work over an algebraically closed field $\bk$ of characteristic zero. We follow standard notation and conventions from \cite{KM98, Kol13}. A \emph{pair} $(X, \Delta)$ is a normal variety $X$ together with an effective $\bQ$-divisor $\Delta$ such that $K_X+\Delta$ is $\bQ$-Cartier. A \emph{singularity} $x\in X$ is a normal variety $X$ together with a closed point $x\in X$. A \emph{klt singularity} $x\in (X,\Delta)$ is a pair $(X,\Delta)$ and a closed point $x\in X$ such that $(X,\Delta)$ is klt in a neighborhood of $x$. A \emph{klt log Fano pair} $(X,\Delta)$ is a projective klt pair such that $-K_X-\Delta$ is ample. We say $X$ is a \emph{$\bQ$-Fano variety} if $(X,0)$ is a klt log Fano pair.

\subsection{Local volumes}

The normalized volume functional and its minimum, the local volume, were introduced in \cite{Li18}. 
We refer to \cite{LLX18} for the common terminology of normalized volumes and local volumes. We will only recall a few things that are repeatedly used in this paper.

\begin{defn}
Let $X$ be a normal variety.
A \emph{divisor $E$ over $X$} is a prime divisor $E$ on a normal birational model $Y\to X$. For a singularity $x\in X$, a divisor $E$ over $X$ whose center is $x$ is called a \emph{divisor over $x\in X$}.
\end{defn}

\begin{defn}
Let $x\in X$ be a klt singularity of dimension $n$.
We denote by $\Val_{X,x}$ the \emph{space of real valuations of $K(X)$ centered at $x$}. Let $A_X: \Val_{X,x} \to (0, +\infty]$ be the \emph{log discrepancy function}. Let $\vol_{X,x}: \Val_{X,x}\to [0, +\infty)$ be the volume function. Denote by $\Val_{X,x}^{\circ}$ the subspace of $\Val_{X,x}$ consisting of valuations $v$ with $A_X(v)<+\infty$.  

We define the \emph{normalized volume functional} $\hvol_{X,x}: \Val_{X,x} \to (0, +\infty]$ as
\[
\hvol_{X,x}(v):= \begin{cases} A_X(v)^n \cdot \vol_{X,x}(v) & \textrm{ if }v\in \Val_{X,x}^{\circ}\\ +\infty & \textrm{ if }v\not\in \Val_{X,x}^{\circ}
\end{cases}
\]

The \emph{local volume} of a klt singularity $x\in X$ is defined as 
\[
\hvol(x, X) := \inf_{v\in \Val_{X,x}} \hvol_{X,x}(v).
\]
\end{defn}

The following theorem is at the center of the study for the normalized volume functional. Since we only need the uniqueness part of the theorem, we will avoid introducing the related notions such as Fano cone singularities.

\begin{thm}[{Stable Degeneration Theorem, \cite{Blu18, LX18, LWX18, XZ20,  BLQ24, XZ25}}]\label{thm:SDC}
Let $x\in X$ be a klt singularity. Then there exists a minimizer $v_*$ of $\hvol_{X,x}$ unique up to scaling. Moreover, $v_*$ is quasi-monomial with a finitely generated associated graded algebra and induces a degeneration of $x\in X$ to a K-semistable Fano cone singularity which further degenerates to a unique K-polystable Fano cone singularity.
\end{thm}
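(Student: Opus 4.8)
The plan is to establish the four assertions of the theorem — existence of a minimizer, its quasi-monomiality, finite generation of its associated graded algebra, and uniqueness up to scaling (together with the two degeneration statements) — essentially in that order, since each step feeds on the previous ones. \emph{Existence and quasi-monomiality.} First I would reduce the computation of $\hvol(x,X)$ to Kollár components: for any $v\in\Val_{X,x}^{\circ}$ one extracts, by a suitable run of the MMP on a log resolution, a Kollár component $S$ over $x\in X$, and by approximating $v$ by quasi-monomial valuations supported on combinations of such components one obtains $\hvol(x,X)=\inf_S\hvol_{X,x}(\ord_S)$, the infimum over all Kollár components. Now take a minimizing sequence $S_i$. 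The crucial input is a boundedness statement: the induced log Fano structures $(S_i,\Delta_{S_i})$ have bounded anti-canonical volume and bounded coefficients/log discrepancies, hence by boundedness of log Fano pairs they lie in a bounded family; passing to a limit inside this family and using lower semicontinuity of the normalized volume along the family produces a limiting klt pair carrying a distinguished valuation which is a minimizer, and since it arises as a monomial combination over a fixed model it is quasi-monomial.

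\emph{Finite generation and degeneration to a K-semistable cone.} Given a quasi-monomial minimizer $v_*$, I would next show that $R:=\gr_{v_*}\cO_{X,x}$ is finitely generated. The structural input is the equivalence, for quasi-monomial valuations whose graded ring is finitely generated, between being a minimizer of $\hvol_{X,x}$ and inducing a degeneration of $x\in X$ to a K-semistable Fano cone singularity. One then proves finite generation by an induction on the rational rank of $v_*$: a degeneration argument reduces to showing that a candidate initial degeneration is of K-semistable type, and finite generation follows by applying BCHM to an auxiliary extraction together with the boundedness of K-semistable Fano cone singularities. Once $R$ is finitely generated, $X_0:=\Spec R$ carries a torus action with a Reeb vector $\xi_0$, and $(X_0,\xi_0)$ is the desired K-semistable Fano cone degeneration of $x\in X$.

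\emph{Uniqueness and K-polystable degeneration.} To obtain uniqueness up to scaling, suppose $v_1,v_2$ are both minimizers; by the previous step both induce K-semistable Fano cone degenerations. Using the uniqueness of the K-polystable degeneration of a K-semistable Fano cone — a Langton-type semistable-reduction argument on the link together with separatedness of the moduli of K-polystable Fano cone singularities — both $v_1$ and $v_2$ degenerate to the \emph{same} K-polystable Fano cone $(X_{00},\xi_{00})$. Tracking the Reeb cone and the torus action on $X_{00}$, and using strict convexity of the normalized volume transverse to scaling along the Reeb cone, then forces $v_1$ and $v_2$ to be proportional; the further degeneration of $(X_0,\xi_0)$ to the unique K-polystable $(X_{00},\xi_{00})$ is part of the same package.

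\emph{Main obstacle.} The deepest step is the finite generation of $\gr_{v_*}\cO_{X,x}$ — the Stable Degeneration Conjecture — since it is exactly what bridges the valuative minimization problem and the algebraic degeneration, and it requires the full strength of the MMP, the boundedness of (log) Fano cone singularities, and a delicate inductive scheme on invariants of the valuation; uniqueness is a close second, because $\Val_{X,x}$ is infinite-dimensional and non-compact, so the convexity and separatedness arguments have to be set up with care.
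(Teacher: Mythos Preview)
The paper does not prove this theorem. Theorem~\ref{thm:SDC} is stated in the Preliminaries section as a known result, with the proof distributed across the cited references \cite{Blu18, LX18, LWX18, XZ20, BLQ24, XZ25}; the author even remarks that only the uniqueness part is needed and therefore declines to introduce the notion of Fano cone singularities in detail. There is thus no ``paper's own proof'' to compare your proposal against.

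As a sketch of the actual literature your outline is broadly in the right spirit, but a few attributions and mechanisms are off. Existence of a minimizer (Blum) goes through graded sequences of ideals and an Izumi-type properness estimate rather than through a boundedness-of-Koll\'ar-components argument; the reduction $\hvol(x,X)=\inf_S \hvol_{X,x}(\ord_S)$ over Koll\'ar components is a separate result of Li--Xu. Quasi-monomiality (Xu) is obtained via the theory of complements and ACC for log canonical thresholds, not by passing to a limit in a bounded family of log Fano pairs. Uniqueness up to scaling (Xu--Zhuang) was established \emph{before} finite generation, using the derivative formula for $\hvol$ along a geodesic of valuations and strict convexity, so it does not logically depend on the degeneration picture as you have it. Finite generation (Xu--Zhuang, the hardest step) proceeds via special complements and a higher-rank finite generation theorem rather than the inductive BCHM scheme you describe. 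If you intend this as a survey-style proof, I would realign the logical dependencies and the methods to match the cited papers.
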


\begin{defn}
A projective birational morphism $f: Y\to X$ from a normal variety $Y$ is called an \emph{lc blow-up} if $f$ is isomorphic over $X\setminus \{x\}$, $f^{-1}(x)$ is the support of a reduced $\bQ$-Cartier divisor $E$, $(Y,E)$ is lc, and $-E$ is ample over $X$. We sometimes write $f:(Y,E)\to X$ to denote an lc blow-up. 

If in addition $(Y,E)$ is plt, we say that $f$ is a \emph{plt blow-up} extracting a \emph{Koll\'ar component} $E$ over $x\in X$. We sometimes say that $(E, \Delta_E)$ is a Koll\'ar component over $x\in X$ where $\Delta_E$ is the different divisor of $(Y,E)$ along $E$. 
\end{defn}

\begin{expl}[\cite{LL19, LX20}] \label{expl:quotient}
Let $x\in X$ be an $n$-dimensional quotient singularity, i.e.\ it is  analytically isomorphic to $0\in \bA^n/G$ where $G\leq \GL(n,\bk)$ is a finite subgroup acting on $\bA^n$ effectively and freely in codimension $1$. Then we have $\hvol(x,X) = \frac{n^n}{|G|}$.
\end{expl}

\begin{thm}[\cite{XZ20}]\label{thm:finite-deg}
Let $\pi: (y\in Y) \to (x\in X)$ be a quasi-\'etale morphism between klt singularities. Then we have
\[
\hvol(y, Y) = \deg(\pi) \cdot \hvol(x,X).
\]
\end{thm}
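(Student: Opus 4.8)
The plan is to reduce the statement to a pointwise comparison of the normalized volume functionals under restriction of valuations. Working with the finite local extension $\cO_{X,x}\subseteq\cO_{Y,y}$, the fraction field $K(Y)$ is a finite extension of $K(X)$ of degree $\deg(\pi)$, so every $v\in\Val_{Y,y}$ restricts to a valuation $\pi_* v:=v|_{K(X)}\in\Val_{X,x}$ centered at $x=\pi(y)$; conversely, by the going-up property for extensions of valuations in integral extensions, every $w\in\Val_{X,x}$ admits an extension $v\in\Val_{Y,y}$ with $\pi_* v=w$, so that $\pi_*\colon\Val_{Y,y}\to\Val_{X,x}$ is surjective. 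I claim the two transformation formulas
\[
A_Y(v)=A_X(\pi_* v)\qquad\text{and}\qquad \vol_{Y,y}(v)=\deg(\pi)\cdot\vol_{X,x}(\pi_* v)
\]
hold for every $v\in\Val_{Y,y}$. Granting these, one gets $\hvol_{Y,y}(v)=A_Y(v)^n\,\vol_{Y,y}(v)=\deg(\pi)\cdot A_X(\pi_* v)^n\,\vol_{X,x}(\pi_* v)=\deg(\pi)\cdot\hvol_{X,x}(\pi_* v)$, and taking the infimum over all $v$ while using surjectivity of $\pi_*$ gives $\hvol(y,Y)=\inf_v\hvol_{Y,y}(v)=\deg(\pi)\cdot\inf_w\hvol_{X,x}(w)=\deg(\pi)\cdot\hvol(x,X)$, which is the assertion.

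For the first (log discrepancy) formula, the key input is that $\pi$ is crepant: since $\pi$ is quasi-\'etale, its ramification divisor vanishes and hence $K_Y=\pi^*K_X$. First I would verify $A_Y(\ord_E)=A_X(\pi_*\ord_E)$ for a divisorial valuation $\ord_E$, where $\pi_*\ord_E=e\cdot\ord_F$ for $F$ the image divisor and $e$ the ramification index of $E$ over $F$; the identity $A_Y(\ord_E)=e\cdot A_X(\ord_F)$ is the local computation of discrepancies along a crepant finite cover, and combining it with the homogeneity $A_X(e\cdot\ord_F)=e\cdot A_X(\ord_F)$ yields $A_Y(\ord_E)=A_X(\pi_*\ord_E)$. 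The general case then follows by extending from divisorial to quasi-monomial and arbitrary valuations using continuity of $A_X$, $A_Y$ and $\pi_*$.

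The second (volume) formula is the technical heart. Here $w=\pi_* v$ satisfies $\fa_m(w)=\fa_m(v)\cap\cO_{X,x}$, so the valuation filtrations are compatible, and the task is to show that the leading coefficients of the colength functions $m\mapsto\dim_\bk\cO_{Y,y}/\fa_m(v)$ and $m\mapsto\dim_\bk\cO_{X,x}/\fa_m(w)$ differ by exactly the factor $\deg(\pi)$. The plan is to exploit that $\cO_{Y,y}$ is a finite $\cO_{X,x}$-module of generic rank $\deg(\pi)=[K(Y):K(X)]$: passing to the associated graded algebras, I would show that $\gr_v\cO_{Y,y}$ is a finite module over $\gr_w\cO_{X,x}$ of the same generic rank $\deg(\pi)$ and the same dimension $n$, so that their Hilbert functions have leading terms in the ratio $\deg(\pi)$; since these leading terms compute $\vol_{Y,y}(v)$ and $\vol_{X,x}(w)$ up to the common normalizing constant, the formula follows.

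I expect this last step to be the main obstacle. Controlling the asymptotics requires a boundedness input ensuring that $\fa_m(v)$ and the extended ideal $\fa_m(w)\cdot\cO_{Y,y}$ agree up to a bounded shift in $m$, so that the generic rank genuinely governs the leading coefficient rather than merely an upper bound; this is where finite generation of the extension and the normality of $X$ and $Y$ (via integral closure) enter. In the Galois case one can cross-check the conclusion against the uniqueness in the Stable Degeneration Theorem: the minimizer $v_*$ over $y\in Y$ is automatically invariant under the Galois action fixing $y$, hence descends to the minimizer $\pi_* v_*$ over $x\in X$, which is consistent with $\hvol(y,Y)=\deg(\pi)\cdot\hvol(x,X)$. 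Once the two transformation formulas are established, the reduction to the infimum is formal.
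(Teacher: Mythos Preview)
The paper does not prove this statement; it merely cites \cite{XZ20}. So there is no in-paper proof to compare against, and I evaluate your proposal on its own terms.

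Your outline has a genuine gap: the pointwise identity $\vol_{Y,y}(v)=\deg(\pi)\cdot\vol_{X,x}(\pi_*v)$ is \emph{false} for arbitrary $v$, and your proposed mechanism for it (that $\gr_v\cO_{Y,y}$ has generic rank $\deg(\pi)$ over $\gr_w\cO_{X,x}$) fails. Take $Y=\bA^3$ with $G=\bZ/3$ acting by cyclic permutation of coordinates; the quotient $\pi\colon Y\to X=Y/G$ is quasi-\'etale of degree $3$. Let $v$ be the monomial valuation with weights $(1,2,4)$, which is not $G$-invariant. Then the initial forms of the elementary symmetric functions $e_1,e_2,e_3$ are $x,\,xy,\,xyz$, so $\gr_w\cO_{X,x}$ already has the same fraction field as $\gr_v\cO_{Y,y}=\bk[x,y,z]$: the generic rank is $1$, not $3$. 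Correspondingly one checks directly that the three cyclic translates of the simplex $\{a+2b+4c<m\}$ are not equal, so the colength of $\fa_m(w)=\fa_m(v)\cap\cO_{X,x}$ grows strictly faster than $\tfrac{1}{3}\cdot\ell(\cO_{Y,y}/\fa_m(v))$, giving $\vol_{Y,y}(v)<3\cdot\vol_{X,x}(\pi_*v)$. What does hold in general is the one-sided inequality $\vol_{Y,y}(v)\le\deg(\pi)\cdot\vol_{X,x}(\pi_*v)$, coming from $\fa_m(w)\cO_{Y,y}\subseteq\fa_m(v)$ together with multiplicativity of multiplicities for finite modules; this yields only the easy direction $\hvol(y,Y)\le\deg(\pi)\cdot\hvol(x,X)$.

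The missing idea is exactly the one you relegate to a ``cross-check.'' In \cite{XZ20} the reverse inequality is obtained by reducing to the Galois case and invoking the uniqueness of the $\hvol$-minimizer: if $v_*$ minimizes on $Y$, then each $g\cdot v_*$ is again a minimizer, so by uniqueness $v_*$ is $G$-invariant. For a $G$-invariant valuation the ideals $\fa_m(v_*)$ are $G$-stable, and then a representation-theoretic averaging argument shows that asymptotically $\ell(\cO_{Y,y}/\fa_m(v_*))\sim |G|\cdot\ell(\cO_{X,x}/\fa_m(\pi_*v_*))$, giving the volume equality for this particular valuation and hence $\hvol(y,Y)\ge\deg(\pi)\cdot\hvol(x,X)$. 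So the Stable Degeneration Theorem is not a consistency check here but the engine of the proof; your purely valuation-theoretic route cannot close the argument without it.
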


% \begin{thm}
% \YL{Lower SC}
% \end{thm}

\begin{thm}[{\cite[Corollary 2.12]{LX19}}]\label{thm:nv-crepant}
Let $\phi : (y\in Y) \to (x\in X)$ be a crepant birational morphism of klt singularities such
that $y \in \mathrm{Ex}(\phi)$. Then $\hvol( x, X) < \hvol( y, Y )$.
\end{thm}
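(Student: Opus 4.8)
\emph{Strategy and setup.} The plan is to compare normalized volumes of valuations on $Y$ and on $X$, using that a crepant birational morphism preserves log discrepancies, together with a \emph{strict} monotonicity of the volume functional under pushing forward along $\phi$ at an exceptional point. Since $\phi$ is a morphism with $\phi(y)=x$, every real valuation of $K(Y)=K(X)$ centered at $y$ is centered at $x$, so $\Val_{Y,y}\subseteq\Val_{X,x}$; and crepancy $K_Y=\phi^*K_X$ gives $A_X(v)=A_Y(v)$ for every such $v$, since the two log discrepancy functions agree on divisorial valuations and these determine them. For the volume, for $v\in\Val_{Y,y}^\circ$ and $m\in\bN$ the valuation ideals satisfy $\mathfrak{a}_m^X(v)=\mathfrak{a}_m^Y(v)\cap\cO_{X,x}$, so $\cO_{X,x}/\mathfrak{a}_m^X(v)$ injects into $\cO_{Y,y}/\mathfrak{a}_m^Y(v)$, giving $\vol_{X,x}(v)\le\vol_{Y,y}(v)$. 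The crux is the following \emph{Key Lemma}: if $y\in\mathrm{Ex}(\phi)$, then this inequality is \emph{strict}, i.e. $\vol_{X,x}(v)<\vol_{Y,y}(v)$ for every $v\in\Val_{Y,y}^\circ$.

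\emph{Deduction of the theorem from the Key Lemma.} Let $v_*$ be the minimizer of $\hvol_{Y,y}$; by Theorem~\ref{thm:SDC} it exists and lies in $\Val_{Y,y}^\circ$. Then
\[
\hvol(x,X)\ \le\ \hvol_{X,x}(v_*)\ =\ A_X(v_*)^n\,\vol_{X,x}(v_*)\ <\ A_Y(v_*)^n\,\vol_{Y,y}(v_*)\ =\ \hvol_{Y,y}(v_*)\ =\ \hvol(y,Y),
\]
where the first inequality is the definition of the local volume, the outer equalities are the definition of $\hvol$ together with the fact that $v_*$ minimizes $\hvol_{Y,y}$, and the strict inequality combines $A_X(v_*)=A_Y(v_*)$ with the Key Lemma.

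\emph{The main obstacle: proving the Key Lemma.} Identifying $\cO_{X,x}$ with its image in $\cO_{Y,y}$, we have $\dim_\bk\cO_{X,x}/\mathfrak{a}_m^X(v)=\dim_\bk\bigl(\cO_{X,x}+\mathfrak{a}_m^Y(v)\bigr)/\mathfrak{a}_m^Y(v)$, so the lemma says that $\cO_{X,x}$ is asymptotically strictly smaller than $\cO_{Y,y}$ with respect to the $v$-filtration, already to leading order $m^n$. Since $\phi$ is not a local isomorphism at $y$ we have $\cO_{X,x}\subsetneq\cO_{Y,y}$, and since $y\in\mathrm{Ex}(\phi)$ the fiber $\phi^{-1}(x)$ is positive-dimensional at $y$ (otherwise $\phi$ would be quasi-finite and birational near $y$ over the normal variety $X$, hence a local isomorphism there), so $\mathfrak{m}_x\cO_{Y,y}$ is not $\mathfrak{m}_y$-primary and $\cO_{X,x}\subseteq\bk\oplus\mathfrak{m}_x\cO_{Y,y}$. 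The hard part is to turn this into a quantitative statement: one must show that the ``missing'' part $\cO_{Y,y}/\bigl(\cO_{X,x}+\mathfrak{a}_m^Y(v)\bigr)$ has strictly positive density, contributing a positive multiple of $m^n$ rather than only $o(m^n)$. I would argue by passing to a log-smooth birational model dominating $Y$ on which $v$ becomes monomial, pulling back the finitely generated algebra $\cO_{X,x}$, and estimating the resulting semigroup of lattice points; the cases of the minimal resolution of an $A_1$-surface singularity and of a small resolution of the three-dimensional ordinary double point already display the mechanism, with respective missing densities $\tfrac16$ and $\tfrac1{12}$. An alternative route is to use Theorem~\ref{thm:SDC} to degenerate $\phi$ to a $\bT$-equivariant morphism between the K-semistable Fano cone degenerations of $X$ and $Y$, observe that equality of volumes would force this morphism to be birational and then, by normality of the cones together with the positivity of the grading, an isomorphism near the cone points, and finally spread this out over the base of the degeneration to conclude that $\phi$ is a local isomorphism at $y$ — contradicting $y\in\mathrm{Ex}(\phi)$.
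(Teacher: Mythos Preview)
The paper does not prove this statement; it is quoted from \cite{LX19} as a preliminary result. So there is no in-paper proof to compare against, and I will assess your argument on its own merits.

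Your reduction is correct and standard: crepancy gives $A_X=A_Y$ on $\Val_{Y,y}$, the inclusion $\cO_{X,x}\hookrightarrow\cO_{Y,y}$ gives $\vol_{X,x}(v)\le\vol_{Y,y}(v)$, and applying this to a minimizer $v_*$ of $\hvol_{Y,y}$ (which exists by \cite{Blu18}) yields the non-strict inequality. The entire content of the theorem is the strictness, i.e.\ your Key Lemma, and this is where your argument has a genuine gap.

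First, a small point: as stated (``for every $v\in\Val_{Y,y}^\circ$'') the Key Lemma is false, since any $v$ with $\vol_{Y,y}(v)=0$ forces $\vol_{X,x}(v)=0$ as well. You only need it for $v_*$, which has positive volume, so this is easily repaired.

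The real issue is that neither of your two routes to the Key Lemma is carried out, and the second one does not work as stated. Your Approach~2 asserts that one can ``degenerate $\phi$ to a $\bT$-equivariant morphism between the K-semistable Fano cone degenerations of $X$ and $Y$.'' But Theorem~\ref{thm:SDC} produces the degeneration of $x\in X$ from the minimizer of $\hvol_{X,x}$ and the degeneration of $y\in Y$ from the minimizer of $\hvol_{Y,y}$; these are \emph{a priori} different valuations inducing different filtrations, and there is no mechanism that makes $\phi$ specialize to a morphism between the two limits. Even under the hypothesis $\hvol(x,X)=\hvol(y,Y)$, so that $v_*$ minimizes both functionals, you would still need to show that the induced degeneration of $Y$ carries along a compatible degeneration of the subring $\cO_{X,x}\subset\cO_{Y,y}$ to a cone, which is not part of the Stable Degeneration Theorem. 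Your Approach~1 (pass to a log-smooth model where $v_*$ is monomial and estimate the semigroup of exponents of $\cO_{X,x}$) is more promising, but ``estimating the resulting semigroup of lattice points'' is precisely the hard step: one must show that the image of $\cO_{X,x}$ misses a set of lattice points of positive density, and your text gives no argument for this beyond two worked examples. The observation that $\fm_x\cO_{Y,y}$ is not $\fm_y$-primary only shows $\cO_{X,x}\subsetneq\cO_{Y,y}$, which by itself does not preclude equal volumes (indeed, non-noetherian subrings of the form $\bk+I$ with $I$ a height-one prime can have the same volume). The noetherianity and normality of $\cO_{X,x}$ must enter, and you have not indicated how.

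In short: the skeleton is right, but the theorem lives entirely inside the Key Lemma, and that lemma remains unproved in your write-up.
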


\begin{thm}[\cite{LX19}]\label{thm:ODP-gap}
Let $x\in X$ be a klt singularity of dimension $n$. Then 
$\hvol(x,X) \leq n^n$, and equality holds if and only if $x\in X$ is smooth.

Moreover, if $n=3$ and $x\in X$ is not smooth, then $\hvol(x,X) \leq 16$, and equality holds if and only if $x\in X$ is an $A_1$-singularity. 
\end{thm}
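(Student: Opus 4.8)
The statement has two parts, the second sharpening the first when $n=3$, and I would prove them in that order. For the bound $\hvol(x,X)\le n^n$ I would use the valuative estimate $\hvol(x,X)\le \lct(\mathfrak{a})^n\cdot\mult(\mathfrak{a})$, valid for every $\mathfrak{m}_x$-primary ideal $\mathfrak{a}\subseteq\cO_{X,x}$ and obtained by testing $\hvol_{X,x}$ against valuations $v$ with $v(\mathfrak{a})\ge 1$, using $\vol_{X,x}(v)\le\mult(\mathfrak{a})/v(\mathfrak{a})^n$ together with $\lct(\mathfrak{a})=\inf\{A_X(v):v(\mathfrak{a})\ge1\}$. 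Taking $\mathfrak{a}=\mathfrak{m}_x$ reduces the problem to the inequality $\lct(\mathfrak{m}_x)^n\,\mult(\mathfrak{m}_x)\le n^n$, which holds for any klt local ring, e.g.\ by analyzing a divisor on the normalized blow-up of $\mathfrak{m}_x$ or via de Fernex--Ein--Musta\c{t}\u{a} type bounds. When $x\in X$ is smooth one has $\lct(\mathfrak{m}_x)=n$, $\mult(\mathfrak{m}_x)=1$, and the point-blow-up valuation attains $\hvol=n^n$. For the converse I would propagate the equality: it forces equality throughout the chain, hence constrains the minimizer $v_*$ (quasi-monomial by Theorem \ref{thm:SDC}) and its degeneration; since the only K-semistable Fano cone of dimension $n$ with local volume $n^n$ is $(\bA^n,0)$, the uniqueness of the degeneration in Theorem \ref{thm:SDC} forces $x\in X$ to be smooth.

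For the threefold refinement, assume $n=3$ and $x\in X$ not smooth. The basic computational input is that for a Koll\'ar component $E$ over $x$ one has $\vol_{X,x}(\ord_E)=(-E|_E)^2$, and, combining the adjunction identity $(K_Y+E)|_E=K_E+\Delta_E$ with $K_Y|_E=(A_X(E)-1)E|_E$, the relation $-E|_E=(-K_E-\Delta_E)/A_X(E)$; hence
\[
\hvol_{X,x}(\ord_E)=A_X(E)^3(-E|_E)^2=A_X(E)\cdot(-K_E-\Delta_E)^2\le 9\,A_X(E),
\]
using the volume bound $(-K_E-\Delta_E)^2\le 9$ for klt log del Pezzo surfaces. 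If $x\in X$ is \emph{not canonical}, then $\mld(x,X)<1$ and one can extract a Koll\'ar component $E$ with $A_X(E)=\mld(x,X)<1$, so the displayed inequality gives $\hvol(x,X)<9<16$, which in particular excludes the $A_1$ case.

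So assume $x\in X$ is canonical. If it is \emph{not terminal}, then $\mld(x,X)=1$ and the terminalization $\mu\colon W\to X$ extracts at least one crepant divisor; I would run a suitable MMP over $X$ contracting these crepant divisors one at a time, so that at each step Theorem \ref{thm:nv-crepant} gives $\hvol(x,X)<\hvol(x_i,X_i)$ for the image point $x_i$ on the crepant model $X_i$. Whenever a crepant divisor can be realized as a Koll\'ar component over a point, the displayed estimate with $A_X(E)=1$ yields $\hvol(x,X)\le(-K_E-\Delta_E)^2\le 9<16$; the only other possibility is that an uncontracted exceptional divisor has a curve $C_i\subset X_i$ as its center, in which case a general point of $C_i$ is a transversal $cADE$ threefold singularity, whose local volume is at most $\hvol(cA_1)=\tfrac{27}{2}<16$ by a direct monomial-valuation computation, so again $\hvol(x,X)<16$. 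If $x\in X$ is \emph{terminal}, then by Reid's classification it is analytically a cyclic quotient $V/\mu_r$ of an isolated $cDV$, hence hypersurface, singularity $0\in V$, and $\hvol(x,X)=\hvol(0,V)/r$ by Theorem \ref{thm:finite-deg}. For $0\in V=\{f=0\}\subset\bA^4$ with $d:=\mult_0 f$, the monomial valuation on $\bA^4$ with all weights $1$, restricted to $V$, has log discrepancy $4-d$ and volume $\mult_0 V=d$, so $\hvol(0,V)\le(4-d)^3 d$; since $V$ is klt we have $2\le d\le 3$, whence $\hvol(0,V)\le 16$ with equality only if $d=2$, and a short analysis of the rank of the quadratic part of $f$ shows that $\hvol(0,V)=16$ forces $0\in V$ to be the ordinary double point. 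Putting the cases together, $\hvol(x,X)\le 16$ always, and equality forces $x\in X$ terminal with $r=1$ and $0\in V$ an $A_1$ singularity, i.e.\ $x\in X$ is an $A_1$ singularity.

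The step I expect to be the main obstacle is the canonical non-terminal case: one must control the MMP on the terminalization so that a crepant divisor genuinely becomes a Koll\'ar component (so that the clean identity $\hvol_{X,x}(\ord_E)=A_X(E)(-K_E-\Delta_E)^2$ and the log-del-Pezzo bound apply), while keeping precise track of which singularities — along curves or at new points — are created on the intermediate models, and verifying that each of them has local volume strictly below $16$. A secondary difficulty is the equality analysis in the $n^n$-bound, namely ruling out every non-smooth (hence, by Theorem \ref{thm:SDC}, K-semistable) Fano cone from attaining local volume $n^n$.
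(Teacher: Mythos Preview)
The paper does not give its own proof of this theorem; it is quoted from \cite{LX19} as a prerequisite. So there is no in-paper argument to compare against, and I will evaluate your proposal on its own terms.

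There is a genuine gap. In your ``not canonical'' case you assert that for a Koll\'ar component $(E,\Delta_E)$ one has $(-K_E-\Delta_E)^2\le 9$ because $(E,\Delta_E)$ is a klt log del Pezzo surface. This is false: for $E=\bP(1,1,m)$ with $\Delta_E=0$ one has $(-K_E)^2=(m+2)^2/m$, which exceeds $9$ once $m\ge 5$ and is unbounded. The correct inequality of that shape is Fujita's bound, which requires $(E,\Delta_E)$ to be \emph{K-semistable}; a Koll\'ar component enjoys that only when $\ord_E$ is a minimizer of $\hvol_{X,x}$, and there is no reason the minimizing component should have $A_X(E)<1$ just because $\mld(x,X)<1$. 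Relatedly, your claim that one can always extract a Koll\'ar component with $A_X(E)=\mld(x,X)$ is not standard: the divisor computing the mld can be extracted (Proposition~\ref{prop:extract}), but the resulting pair $(Y,E)$ need not be plt, so your adjunction identity $\hvol_{X,x}(\ord_E)=A_X(E)\,(-K_E-\Delta_E)^2$ is not available.

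The route actually taken in \cite{LX19}, and mirrored throughout this paper (see the proofs of Theorems~\ref{thm:nv-3fold} and~\ref{thm:nv-mld}), is to pass to the index-one cover via Theorem~\ref{thm:finite-deg}, reducing immediately to the Gorenstein canonical case, and then to carry out the MMP/classification analysis you sketch for the canonical and terminal cases. Your terminal case and the $n^n$ bound are essentially on the right track (for the equality case in the $n^n$ bound one does go through the K-semistable degeneration and Fujita's inequality, as you indicate); but the ``not canonical'' branch as written does not go through and should be replaced by the cover argument.
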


\begin{defn}
We say that two singularities $x\in X$ and $x'\in X'$ are \emph{analytically isomorphic} if we have $\widehat{\cO_{X,x}}\cong \widehat{\cO_{X',x'}}$ as $\bk$-algebras. 
\end{defn}

Note that our notion of ``analytically isomorphic'' refers to ``formally isomorphic'' in the literature. Nevertheless, a famous result of Artin \cite[Corollary 2.6]{Art69} shows that our notion of analytically isomorphic singularities have isomorphic \'etale neighborhoods, hence have isomorphic complex analytic germs when  $\bk = \bC$. We will often not distinguish between analytic isomorphisms and \'etale equivalences.

\begin{prop}[{\cite[Proposition 2.24]{HLQ23}}]\label{prop:nv-analytic}
Let $x\in X$ and $x'\in X'$ be analytically isomorphic klt singularities. Then $\hvol(x,X) = \hvol(x', X')$.
\end{prop}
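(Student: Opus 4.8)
The plan is to express $\hvol$ as an infimum of invariants that visibly depend only on the completed local ring, so that the hypothesis $\widehat{\cO_{X,x}}\cong\widehat{\cO_{X',x'}}$ does the rest. Concretely, I would use the reformulation of local volumes via $\fm$-primary ideals (see \cite{Li18, Liu18, LLX18}): for a klt singularity $x\in X$ of dimension $n$,
\[
\hvol(x,X)\;=\;\inf_{\fa_\bullet}\ \mathrm{lct}(X;\fa_\bullet)^{n}\cdot \mathrm{mult}(\fa_\bullet),
\]
where $\fa_\bullet=(\fa_m)_{m\ge 1}$ runs over graded sequences of $\fm_x$-primary ideals of $\cO_{X,x}$, with $\mathrm{mult}(\fa_\bullet)=\lim_{m\to\infty}\tfrac{n!}{m^{n}}\dim_\bk(\cO_{X,x}/\fa_m)$ and $\mathrm{lct}(X;\fa_\bullet)=\sup_m m\cdot\mathrm{lct}(X;\fa_m)$. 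Here $n=\dim\widehat{\cO_{X,x}}=\dim\widehat{\cO_{X',x'}}$ agrees on the two sides, and klt-ness of $x'\in X'$ is in any case detected on the completion, so both sides of the asserted equality make sense.

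First I would set up a dictionary between $\fm_x$-primary ideals of $\cO_{X,x}$ and $\widehat{\fm}$-primary ideals of $R:=\widehat{\cO_{X,x}}$. Since $\cO_{X,x}\to R$ is faithfully flat and a finite-colength ideal is determined by its image in $\cO_{X,x}/\fm_x^{N}=R/\widehat{\fm}^{N}$, extension $\fa\mapsto \fa R$ and contraction $\fb\mapsto\fb\cap\cO_{X,x}$ are mutually inverse bijections here; they respect products, hence graded sequences, and they preserve colengths, so $\mathrm{mult}(\fa_\bullet)=\mathrm{mult}(\fa_\bullet R)$. They also preserve log canonical thresholds: for each $m$ a log resolution of $(X,\fa_m)$ base-changes to one of $(\Spec R,\fa_m R)$ with the same discrepancies and the same orders of vanishing, so $\mathrm{lct}(X;\fa_m)=\mathrm{lct}(\Spec R;\fa_m R)$, and therefore $\mathrm{lct}(X;\fa_\bullet)$ is unchanged as well. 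This compatibility of lct with completion is standard (cf.\ \cite{Kol13}) and is the one genuinely non-formal point in the argument.

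Finally I would transport across the given isomorphism: $R=\widehat{\cO_{X,x}}\xrightarrow{\ \sim\ }\widehat{\cO_{X',x'}}=:R'$ carries $\widehat{\fm}$ to $\widehat{\fm}'$, so it matches $\widehat{\fm}$-primary graded sequences in $R$ with $\widehat{\fm}'$-primary graded sequences in $R'$ while preserving $\mathrm{mult}$ and $\mathrm{lct}$. Composing it with the two completion dictionaries yields a bijection between graded sequences of $\fm_x$-primary ideals of $\cO_{X,x}$ and of $\fm_{x'}$-primary ideals of $\cO_{X',x'}$ under which $\mathrm{lct}(\,\cdot\,)^{n}\cdot\mathrm{mult}(\,\cdot\,)$ is invariant; taking infima and invoking the displayed formula on both sides gives $\hvol(x,X)=\hvol(x',X')$. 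As an alternative route that avoids citing the ideal formula, one can argue straight from the valuative definition — the space $\Val^{\circ}_{X,x}$ together with $A_X$ and $\vol_{X,x}$ depends only on $\widehat{\cO_{X,x}}$, since the valuation ideals $\fa_m(v)$ and the log discrepancy function are analytic-local — or combine Artin approximation, which as recalled above supplies a common \'etale neighborhood of $x\in X$ and $x'\in X'$, with \'etale-local invariance of $\hvol$.
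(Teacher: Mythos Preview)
The paper does not supply its own proof of this proposition; it simply quotes \cite[Proposition~2.24]{HLQ23}. From the way that reference is used later (in the proof of Proposition~\ref{prop:crex-analytic}), one can see that the argument in \cite{HLQ23} proceeds by constructing a bijection $\phi:\Val_{X,x}^{\circ}\to\Val_{X',x'}^{\circ}$ that preserves $A_X$ (and, implicitly, volumes), and then reading off the equality of infima directly from the valuative definition of $\hvol$.

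Your primary route via the ideal-theoretic formula $\hvol(x,X)=\inf_{\fa_\bullet}\lct(X;\fa_\bullet)^{n}\cdot\mult(\fa_\bullet)$ is correct and is a genuine alternative to the valuation-space argument. The dictionary you set up between $\fm_x$-primary ideals and $\widehat{\fm}$-primary ideals, together with preservation of colengths and of $\lct$, is exactly what is needed; it has the advantage that the only nontrivial input is the formal-local invariance of $\lct$, which is a single well-documented fact, rather than having to check that $A_X$, volumes, and the fine structure of $\Val^{\circ}$ all pass to the completion. One caveat: your sentence ``a log resolution of $(X,\fa_m)$ base-changes to one of $(\Spec R,\fa_m R)$'' is a little loose, since $\Spec R$ is not a variety and the notion of log resolution there needs care; it is cleaner to invoke directly the known statement that $\lct$ depends only on the completion (e.g.\ via \cite{Kol13} or the jet-scheme/arc characterization), which you already flag as the genuinely non-formal point. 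Your closing remark about the alternative valuation-space argument and about Artin approximation plus \'etale invariance is precisely the route taken in \cite{HLQ23}.
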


\begin{defn}
Let $X$ be a klt variety and $\eta\in X$ a scheme-theoretic point. We define the \emph{minimal log discrepancy} of $\eta\in X$ as
\[
\mld(\eta, X):= \inf_{E} A_X(E),
\]
where $E$ runs over all divisors over $X$ centered at $\eta$.
\end{defn}

\subsection{Threefold hypersurface singularities}

Throughout this subsection, let $x\in X$ be a threefold hypersurface  singularity, i.e.\ its embedding dimension is at most $4$. Note that a smooth point is a hypersurface singularity  in our convention.

\begin{defn}
We say that $x\in X$ is an \emph{$A_k$-singularity} for $k\geq 1$, a \emph{$D_k$-singularity} for $k\geq 4$, or an \emph{$E_k$-singularity} for $k\in \{6,7,8\}$ if locally analytically it is given by the equation $x_1 x_2 + f(x_3,x_4) = 0$ where $f$ is determined by the following: 
\[
f(x_3, x_4) = \begin{cases}
x_3^2 + x_4^{k+1} & A_k\\
x_3^2 x_4 + x_4^{k-1} & D_k\\
x_3^3 + x_4^4 & E_6\\
x_3^3 + x_3 x_4^3 & E_7\\
x_3^3 + x_4^5 & E_8
\end{cases}
\]
\end{defn}

\begin{defn}
We say that $x\in X$ is an \emph{$A_\infty$-singularity} (resp.\ a \emph{$D_\infty$-singularity}) if locally analytically it is given by the equation $x_1 x_2 + x_3^2 = 0$ (resp.\ $x_1x_2 + x_3^2 x_4 = 0$).
\end{defn}

\begin{defn}
We say that $x\in X$ is \emph{compound du Val (cDV)} if a general hyperplane section $H$ through $x$ satisfies that $x\in H$ is a du Val surface singularity. Moreover, if $x\in H$ is of type $A_k$ ($k\geq 1$), $D_k$ ($k\geq 4$), or $E_k$ ($k\in \{6,7,8\}$) for a general $H$, we say that $x\in X$ is of type $cA_k$, $cD_k$, or $cE_k$, respectively. 
\end{defn}

By convention, we treat a smooth surface and a smooth threefold as a surface $A_0$-singularity and a $cA_0$-singularity, respectively.

It is easy to see that for threefolds, an $A_k$ or $A_\infty$-singularity is of type $cA_1$ (the converse also holds), while a $D_k$, $D_\infty$, or $E_k$-singularity is of type $cA_2$. 

\begin{defn}
We say that $x\in X$ is a \emph{transversal du Val singularity} if it is locally analytically isomorphic to $(s,0) \in S\times \bA^1$ for a du Val surface singularity $s\in S$. Moreover, we say $x\in X$ is a \emph{transversal $A_k$, $D_k$, or $E_k$-singularity} if $s\in S$ is a surface $A_k$, $D_k$, or $E_k$-singularity, respectively.
\end{defn}

Clearly, a transversal $A_k$, $D_k$, or $E_k$-singularity is of type $cA_k$, $cD_k$, or $cE_k$, respectively. 
It is easy to see that $A_\infty$-singularities are the same as transversal $A_1$-singularities.

\subsection{Crepant exceptional divisors}

\begin{defn}
    Let $x\in X$ be a klt singularity. Define $\crex(x, X)$ to be the number of prime divisors $E$ over $X$ centered at $x$ such that $A_X(E) = 1$. We call such $E$ a \emph{crepant exceptional divisor} over $x\in X$. Note that $\crex(x,X)$ is always finite by \cite[Proposition 2.36]{KM98}.
\end{defn}

\begin{prop}\label{prop:extract}
Let $x\in X$ be a klt singularity. Let $E$ be a prime divisor over $x\in X$ satisfying $A_X(E)\leq 1$. Then there exists a projective birational morphism $\mu: Y\to X$ from a normal variety $Y$ such that $\mu$ is an isomorphism over $X\setminus\{x\}$, $\mu^{-1}(x) = E$, and $-E$ is $\bQ$-Cartier and ample.
\end{prop}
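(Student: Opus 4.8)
The plan is to extract the divisor $E$ via a relative MMP, using the fact that $A_X(E) \leq 1$ forces $E$ to appear with nonnegative discrepancy on a log resolution, so we can arrange a terminalization-type model where $E$ is the unique exceptional divisor we want to keep. First I would take a log resolution $g: W \to X$ on which $E$ appears as a divisor, write $K_W + E + \sum_{i} a_i E_i = g^*K_X$ with all $a_i \geq 0$ (using $A_X(E) \leq 1$ and kltness of $x \in X$ to control signs; the coefficient of $E$ is $1 - A_X(E) \geq 0$), and then run a $(K_W + E)$-MMP over $X$. Since $X$ is klt and $g$ is an isomorphism away from $x$, this MMP terminates with a model $Y \to X$ on which all the $E_i$ (and possibly part of $E$'s coefficient) have been contracted, leaving $Y$ with $E$ as the only $g$-exceptional divisor; a standard negativity/connectedness argument then shows $\mu^{-1}(x) = E$ set-theoretically and that $\mu$ is an isomorphism over $X \setminus \{x\}$.

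The key steps, in order, are: (1) reduce to a log resolution picture and record the discrepancy inequalities; (2) set up the relative MMP with scaling so that $E$ is the last divisor standing — here one should be careful to run the MMP for a divisor of the form $K_W + (1-\epsilon)E + (\text{rest})$ or to use the classical ``extracting a divisor of discrepancy $\leq 1$'' construction (as in Kollár's \emph{Singularities of the Minimal Model Program}, or the construction of $\mathbb{Q}$-factorial dlt/terminal models), so that contracting the other $E_i$ is forced while $E$ is not contracted; (3) verify that the output $\mu: Y \to X$ has $-E$ relatively ample — this follows because the last step of the MMP that contracts everything else is a $(K+E)$-negative contraction, or because we run the MMP relative to $-E$ directly; (4) deduce $\mathbb{Q}$-Cartierness of $E$ from the fact that MMP outputs are $\mathbb{Q}$-factorial, or at least that $E$ is $\mathbb{Q}$-Cartier as the relatively ample divisor produced; and (5) confirm $\mu^{-1}(x) = E$ by the negativity lemma together with connectedness of fibers.

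The main obstacle I expect is step (2)–(3): making sure the MMP contracts \emph{exactly} the unwanted divisors and not $E$ itself, and simultaneously ending with $-E$ ample rather than merely nef. The clean way is to invoke the now-standard result (see \cite[Lemma 4.x]{Kol13} or \cite{BCHM10}) that for a klt pair and any prime divisor $E$ over $X$ with $A_X(E) \leq 1$, one can run a suitable MMP to obtain a projective birational model extracting only $E$; the content here is essentially to check that the hypotheses of that result are met and that the relative Picard rank drops to $1$ over the point $x$, which gives both $\mathbb{Q}$-Cartierness and ampleness of $-E$ for free. An alternative, if one wants to be self-contained, is: take any $\mathbb{Q}$-factorial dlt modification of $(X, 0)$ extracting all divisors of log discrepancy $\leq 1$, then run a $(-E)$-MMP over $X$ to contract all of them except $E$; since $-E$ is big over $X$ (its support is the fiber over $x$) and the relevant pair is dlt, this MMP terminates and the last model has $-E$ relatively ample, as desired.
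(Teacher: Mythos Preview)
Your proposal is correct and is essentially the same approach as the paper's, namely to invoke the BCHM-type extraction of a single divisor with log discrepancy at most $1$. The paper's proof is much terser: instead of running the MMP by hand, it adds a small effective $\bQ$-Cartier $\bQ$-divisor $\Delta$ through $x$ so that $(X,\Delta)$ stays klt while $A_{X,\Delta}(E)<1$ strictly, and then cites the pair version of \cite[Proposition~1.5]{Blu21} (which rests on \cite{BCHM10}). That add-a-boundary trick is precisely what handles the borderline case $A_X(E)=1$ you flagged in steps~(2)--(3); your $(1-\epsilon)E$ maneuver is aimed at the same issue but is slightly more delicate to justify, whereas perturbing the boundary puts you squarely inside the hypotheses of the cited extraction result and absorbs your steps (3)--(5) into the black box.
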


\begin{proof}
    We can add a small effective $\bQ$-Cartier $\bQ$-divisor $\Delta$ through $x$ such that $(X, \Delta)$ is klt and $A_{X,\Delta}(E)<1$. Then the statement follows from the pair version of \cite[Proposition 1.5]{Blu21} which relies on \cite{BCHM10}.
\end{proof}

\begin{prop}\label{prop:crex-analytic}
Let $x\in X$ and $x'\in X'$ be analytically isomorphic klt singularities. Then we have $\crex(x,X) = \crex(x', X')$.
\end{prop}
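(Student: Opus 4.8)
The plan is to reduce the statement to the behavior of crepant exceptional divisors under \'etale morphisms, exactly paralleling the proof of Proposition \ref{prop:nv-analytic}, and then to establish a log-discrepancy-preserving bijection in that setting. First I would invoke Artin's approximation theorem (\cite[Corollary 2.6]{Art69}, as recalled after the definition of analytic isomorphism): since $\widehat{\cO_{X,x}}\cong\widehat{\cO_{X',x'}}$, there is a common \'etale neighborhood, i.e.\ a pointed $\bk$-variety $(U,u)$ with $\kappa(u)=\bk$ and \'etale morphisms $f\colon(U,u)\to(X,x)$ and $g\colon(U,u)\to(X',x')$. Because klt is an \'etale-local condition, after shrinking $U$ we may assume $U$ is a klt singularity at $u$; and since $f^{-1}(x)$ and $g^{-1}(x')$ are finite reduced with residue field $\bk$, we may shrink further so that $f^{-1}(x)=\{u\}=g^{-1}(x')$. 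It then suffices to prove $\crex(x,X)=\crex(u,U)$ for an \'etale morphism $f\colon(U,u)\to(X,x)$ of klt singularities with $f^{-1}(x)=\{u\}$, and to apply this to both $f$ and $g$.

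Next I would build a map $\Phi$ from crepant exceptional divisors over $x\in X$ to those over $u\in U$. Given a crepant exceptional divisor $E$ over $x\in X$, extract it by a projective birational $\mu\colon Y\to X$ with $\mu^{-1}(x)=E$ and $-E$ ample (Proposition \ref{prop:extract}), and base change along $f$. The fibre product $Y_U:=Y\times_X U$ is normal (\'etale over $Y$), the induced $\mu_U\colon Y_U\to U$ is an isomorphism over $U\setminus\{u\}$, and, using $f^{-1}(x)=\{u\}$ and $\kappa(u)=\kappa(x)$, the divisor $\mu_U^{-1}(u)$ is the reduced irreducible divisor $E_U$ pulled back from $E$. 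Since $f$ is \'etale one has $K_{Y_U}=f_Y^{*}K_Y$ and $K_U=f^{*}K_X$, hence $A_U(E_U)=A_X(E)=1$ and $\ord_{E_U}|_{K(X)}=\ord_E$; thus $E_U$ is a crepant exceptional divisor over $u\in U$, and I set $\Phi(E):=E_U$. (Well-definedness, independent of the model $Y$, follows from the uniqueness discussed below.)

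Then I would check $\Phi$ is a bijection. Injectivity is immediate from $\ord_{E_U}|_{K(X)}=\ord_E$. For surjectivity, take a crepant exceptional divisor $E'$ over $u\in U$ and restrict $\ord_{E'}$ to $K(X)$. Because $f$ is \'etale, the restriction has ramification index $1$ along $E'$, so $\ord_{E'}|_{K(X)}=\ord_E$ for a prime divisor $E$ over $X$ with $A_X(E)=A_U(E')=1$; and because $f$ is unramified ($\fm_x\cO_{U,u}=\fm_u$) the center of this valuation on $X$ is $x$, so $E$ is a crepant exceptional divisor over $x\in X$. Finally $\Phi(E)=E'$: both $\ord_{E_U}$ and $\ord_{E'}$ are extensions of $\ord_E$ to $K(U)$ centered at $u$, and such an extension is unique, since $f$ \'etale with $f^{-1}(x)=\{u\}$ induces an isomorphism $\widehat{\cO_{X,x}}\xrightarrow{\sim}\widehat{\cO_{U,u}}$ under which a divisorial valuation centered at the closed point extends continuously to the completion and is thereby determined on the dense subring $\cO_{X,x}$. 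Applying $\Phi$ to $f$ and $g$ yields $\crex(x,X)=\crex(u,U)=\crex(x',X')$.

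The hard part is the surjectivity step, and inside it the two facts about \'etale morphisms: that a divisorial valuation over $x\in X$ extends uniquely to $K(U)$ with center $u$, and that this extension has ramification index $1$ and the same log discrepancy. Uniqueness I would handle exactly as indicated above, via continuity of valuations centered at the closed point for the $\fm$-adic topology together with $\widehat{\cO_{X,x}}\cong\widehat{\cO_{U,u}}$; the ramification-index-one and log-discrepancy assertions are standard local computations using $K_U=f^{*}K_X$ (and can be obtained by passing to a model where both $E$ and its extension are divisors). Everything else is formal manipulation of fibre products and log discrepancies.
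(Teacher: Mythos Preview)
Your proposal is correct. The underlying mechanism is the same as in the paper: both arguments transport divisorial valuations through the isomorphism $\widehat{\cO_{X,x}}\cong\widehat{\cO_{X',x'}}$ and verify that log discrepancy and the property ``divisorial with value group $\bZ$'' are preserved, yielding $\crex(x,X)\le\crex(x',X')$ and then equality by symmetry.

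Where you differ is in presentation. The paper's proof is two lines of citations: it invokes \cite[Proof of Proposition 2.24]{HLQ23} for the log-discrepancy-preserving bijection $\Val_{X,x}^{\circ}\to\Val_{X',x'}^{\circ}$, and \cite[Lemma 3.10]{JM12} for the fact that divisorial valuations with value group $\bZ$ map to divisorial valuations with value group $\bZ$. You instead pass explicitly through a common \'etale neighborhood $(U,u)$ via Artin, and realise the bijection geometrically by base-changing the extraction $\mu\colon Y\to X$ of $E$ along $f$; the irreducibility of $E_U$ (using $f^{-1}(x)=\{u\}$ and $\kappa(u)=\bk$), the equality $A_U(E_U)=A_X(E)$ from $K_U=f^*K_X$, and the uniqueness of the extension via $\widehat{\cO_{X,x}}\cong\widehat{\cO_{U,u}}$ then do the work that the paper outsources to \cite{HLQ23} and \cite{JM12}. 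Your approach is more self-contained and makes the geometry visible; the paper's is terser. One small point worth tightening in a write-up: the ``ramification index $1$'' and ``restriction is divisorial'' assertions are most cleanly seen by extracting $E$ on $Y$, base-changing, and observing that $\ord_{E'}$ must be centered at the generic point of the irreducible divisor $E_U\subset Y_U$ (since its restriction is centered at $E$ and $f_Y$ is \'etale), which forces $\ord_{E'}=\ord_{E_U}$ on the nose; your final paragraph gestures at this but could be made more direct.
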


\begin{proof}
Denote the crepant exceptional divisors over $x\in X$ by $\{E_1,\cdots, E_l\}$ with $l := \crex(x,X)$. Let $v_i:= \ord_{E_i}\in \Val_{X,x}^{\circ}$. By \cite[Proof of Proposition 2.24]{HLQ23}, we have a bijective map $\phi:\Val_{X,x}^{\circ} \to \Val_{X',x'}^{\circ}$ with $v_i':=\phi(v_i)$ such that $A_X(v_i) = A_{X'}(v_i')$. Moreover, by \cite[Lemma 3.10]{JM12} we know that $v_i'$ is also divisorial whose value group is $\bZ$. Thus there exist distinct divisors $\{E_i'\}_{1\leq i\leq l}$ over  $x\in X$ such that $v_i'= \ord_{E_i'}$ and $A_{X'}(E_i') = A_X(E_i) = 1$. Thus we have $\crex(x', X') \geq \crex(x,X)$. By symmetry we get the reversed inequality and hence the equality.
\end{proof}

\begin{thm}[{\cite[Theorems 5.34 and 5.35]{KM98}}]\label{thm:crex-cDV}
Let $x\in X$ be a Gorenstein canonical threefold singularity.
\begin{enumerate}
    \item We have $\crex(x,X)=0 $ if and only if $x\in X$ is a cDV hypersurface singularity.
    \item Suppose $x\in X$ is not a hypersurface singularity. Then we have $\crex(x, X) > 0$, the blow-up $Y = \Bl_x X \to X$ is a crepant birational morphism, and $Y$ is Gorenstein canonical. Moreover, the embedding dimension of $x\in X$ is its Hilbert--Samuel multiplicity plus $1$.
\end{enumerate}

%A  canonical hypersurface threefold singularity $x\in X$ is cDV if and only if $\crex(x,X) = 0$.

%Let $x\in X$ be a Gorenstein canonical non-hypersurface threefold singularity. Then the following hold.
%\begin{enumerate}
%    \item The invariant $k \geq 4$ for $x\in H$ where $H\subset X$ is a general hypersurface section through $x$.\footnote{\YL{may need to introduce $k$, decide later}}

\end{thm}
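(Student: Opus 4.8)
The plan is to prove the soft implication of (1) directly and to reduce everything else to Reid's classification of Gorenstein canonical threefold singularities, which is precisely what \cite[Theorems 5.34 and 5.35]{KM98} package.

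First I would dispose of the implication ``cDV $\Rightarrow$ hypersurface and $\crex(x,X)=0$''. Suppose $x\in X$ is cDV and let $H$ be a general hyperplane section through $x$, so $x\in H$ is du Val. The defining equation of $H$ is a general element of $\fm_x$, hence lies outside $\fm_x^2$, so $\edim(x,H)=\edim(x,X)-1\le 3$; thus $x\in X$ is a hypersurface singularity, in particular Gorenstein, and $K_H$ is Cartier by adjunction. Since $H$ is a du Val (hence klt) normal Cartier divisor, inversion of adjunction gives that $(X,H)$ is plt near $x$. Therefore $A_{X,H}(E)>0$ for every prime divisor $E$ over $X$ with center $\{x\}$ (such $E$ differs from $\ord_H$), while $\ord_E(H)\ge 1$ because $E$ is centered at the point $x\in H$; hence
\[
A_X(E)=A_{X,H}(E)+\ord_E(H)>1 .
\]
As $X$ is Gorenstein, $A_X(E)$ is an integer, so $A_X(E)\ge 2$: no divisor with center $\{x\}$ is crepant, i.e.\ $\crex(x,X)=0$.

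For the remaining assertions I would invoke Reid's classification \cite{Rei80, Rei94}. To get the converse in (1), suppose $x\in X$ is not cDV; if it is not a hypersurface singularity one is in the case of (2) below, while if it is a non-cDV Gorenstein canonical hypersurface singularity then canonicity forces $\mult_x X\in\{2,3\}$ and Reid's analysis produces a (weighted) blow-up whose exceptional divisor is crepant with center $\{x\}$; either way $\crex(x,X)>0$. For (2), start from $\crex(x,X)>0$ and extract a crepant prime divisor $F$ with center $\{x\}$ via Proposition \ref{prop:extract}, getting a crepant birational morphism $Z\to X$ with $Z$ Gorenstein (its canonical divisor pulls back the Cartier divisor $K_X$) and canonical (discrepancies over $Z$ agree with those over $X$). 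Reid's study of the $\fm_x$-adic filtration then yields that $\gr_{\fm_x}\cO_{X,x}$ is reduced and normal, so $Y=\Bl_x X$ is normal with $\bQ$-Cartier exceptional divisor $E$, $Y\to X$ is crepant, and $Y$ is Gorenstein canonical. Finally, crepancy plus adjunction on $Y$ give $-K_E=(-E)|_E=\cO_E(1)$, exhibiting $E\subset\bP(\fm_x/\fm_x^2)$ as an anticanonically embedded del Pezzo surface with canonical singularities; such a surface satisfies $h^0(E,\cO_E(1))=(-K_E)^2+1$, and since $h^0(E,\cO_E(1))=\edim(x,X)$ while $(-K_E)^2=\deg E=\mult_x X$, one obtains $\edim(x,X)=\mult_x X+1$.

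The main obstacle is that the two assertions attributed to Reid --- that a Gorenstein canonical threefold singularity with no crepant divisor of zero-dimensional center is cDV, and that the $\fm_x$-adic blow-up of a non-hypersurface one is crepant with reduced normal tangent cone --- are not formal consequences of inversion of adjunction; they rest on the case-by-case geometry (general elephants, the behaviour of the tangent cone under the canonical condition) of \cite{Lau77, Rei76, Rei80, Rei94}. I would treat that input as a black box, as \cite{KM98} does, and present the steps above as the organizing framework.
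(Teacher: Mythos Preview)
The paper does not give a proof of this statement; it is recorded purely as a citation to \cite[Theorems 5.34 and 5.35]{KM98}, so there is nothing in the paper to compare your argument against. Your self-contained proof of the implication ``cDV $\Rightarrow$ $\crex(x,X)=0$'' via inversion of adjunction is correct and clean, and you are right that the remaining implications ultimately rest on Reid's analysis, which \cite{KM98} packages and the paper treats as established input.

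That said, your sketch of (2) has two genuine problems. First, the logic is circular as written: you defer the non-hypersurface case of the converse in (1) to (2), but then open (2) with ``start from $\crex(x,X)>0$'', which is exactly the assertion (2) must establish. Reid's argument in \cite[Theorem 5.35]{KM98} does not assume $\crex>0$; it works directly with the $\fm_x$-adic filtration under the hypotheses ``Gorenstein canonical'' and ``multiplicity $\ge 3$'', and the crepancy of $\Bl_x X$ (hence $\crex>0$) is an \emph{output}, not an input --- your extraction of $Z\to X$ via Proposition \ref{prop:extract} is superfluous. Second, your claim that $E$ is a del Pezzo surface \emph{with canonical singularities} is false in general: as the paper itself exploits in Propositions \ref{prop:crex=1}--\ref{prop:crex=2-3}, the exceptional divisor of $\Bl_x X$ can be non-normal (e.g.\ a projection of $\bF_{m-2;1}$ or $\bF_{m-4;2}$ in the notation of \cite{Rei94}) or reducible. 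The identity $h^0(E,-K_E)=(-K_E)^2+1$ does hold for such generalized Gorenstein del Pezzo surfaces, but one also needs linear normality of $E\subset\bP(\fm_x/\fm_x^2)$ to identify $h^0(E,\cO_E(1))$ with $\edim(x,X)$; both facts are outputs of Reid's tangent-cone analysis rather than consequences of the adjunction step you wrote down.
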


\begin{lem}[\cite{Rei80}]\label{lem:can-elliptic}
Let $x\in X$ be a Gorenstein canonical non-hypersurface threefold singularity. Let $H\subset X$ be a general hyperplane section through $x$. Then $x\in H$ is a Gorenstein elliptic surface singularity.
\end{lem}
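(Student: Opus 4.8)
The plan is to combine the structure of Gorenstein canonical threefold singularities with Reid's theory of elliptic surface singularities. Since $x\in X$ is Gorenstein and canonical, by inversion of adjunction the general hyperplane section $x\in H$ is again Gorenstein, and $(H, x)$ is a Gorenstein canonical surface singularity, hence either du Val or — by Reid's classification (see \cite{Rei80}) — an elliptic singularity; recall that in dimension $2$, Gorenstein canonical singularities are exactly the rational double points and the simple elliptic / cusp (minimally elliptic) singularities together with their degenerations. So the dichotomy is: $x\in H$ is du Val, or it is elliptic. I would rule out the du Val case. Indeed, if a general hyperplane section $x\in H$ were du Val, then by definition $x\in X$ would be compound du Val, hence (by Theorem \ref{thm:crex-cDV}(1)) a hypersurface singularity, contradicting the hypothesis that $x\in X$ is not a hypersurface singularity. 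Therefore $x\in H$ must be a Gorenstein elliptic surface singularity.

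Let me spell out the first step more carefully, since that is where the real content sits. By Theorem \ref{thm:crex-cDV}(2), the blow-up $Y = \Bl_x X \to X$ is crepant and $Y$ is Gorenstein canonical; moreover a general hyperplane section $H$ through $x$ has the property that its strict transform $\widetilde{H}$ on $Y$ is a general member of the corresponding linear system on $Y$, and by Bertini $\widetilde{H}$ is normal with canonical (in fact du Val) singularities away from the exceptional locus. The point is to track the adjunction $K_{\widetilde{H}} = (K_Y + \widetilde{H})|_{\widetilde{H}}$ and compare with $K_H = (K_X+H)|_H$; since $K_Y = f^* K_X$ (crepancy) and $H$ is Cartier through $x$, one gets that the induced birational morphism $\widetilde{H}\to H$ is crepant, so $x\in H$ cannot be du Val (a du Val singularity has no crepant divisors over it, i.e.\ $\crex = 0$), forcing $p_g(H,x) > 0$, i.e.\ $H$ is elliptic in Reid's sense.

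The main obstacle I anticipate is verifying that a general hyperplane section behaves well under the blow-up — that is, that taking strict transforms commutes appropriately with the "general hyperplane" operation, so that one really does get a crepant exceptional divisor on $\widetilde{H}$ over $x\in H$ and not merely on $Y$ over $x\in X$. This requires knowing that $\Bl_x X$ separates the tangent directions enough that a general $H$ meets the exceptional divisor $E$ in a divisor $E\cap \widetilde{H}$ that is itself exceptional and crepant for $\widetilde{H}\to H$; concretely, one checks that $\widetilde{H}$ passes through the generic point of (a component of) $E$ with the right discrepancy, using that $H$ is a general Cartier divisor through $x$ and the embedded structure of $x\in X$ from Theorem \ref{thm:crex-cDV}(2) (embedding dimension $=$ multiplicity $+1 \geq 3$, so $X$ is genuinely non-hypersurface and the blow-up is nontrivial on $H$). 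Alternatively — and this is probably the cleanest route — I would simply invoke Reid's classification of the general hyperplane section of a Gorenstein canonical non-cDV threefold point directly from \cite{Rei80} (the "elephant" results), which is precisely the statement that the general elephant, when it is not du Val, is a Gorenstein elliptic surface singularity, so the lemma is essentially a citation once the du Val case is excluded via Theorem \ref{thm:crex-cDV}(1).
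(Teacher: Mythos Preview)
The paper gives no proof here; the lemma is simply attributed to \cite{Rei80}, so your closing suggestion to cite Reid's elephant results directly matches what the paper does. The self-contained argument you sketch, however, has two genuine errors. First, it is false that $H$ is canonical by inversion of adjunction and that two-dimensional Gorenstein canonical singularities include elliptic ones: in dimension $2$, Gorenstein canonical is \emph{equivalent} to du Val, while elliptic surface singularities are only log canonical (not even klt). Inversion of adjunction from $X$ canonical gives at best that $H$ is lc, not canonical --- and indeed $H$ cannot be canonical here, since the conclusion is that $H$ is elliptic. The substantive input from \cite{Rei80} is the bound $p_g(H,x)\leq 1$ for a general hyperplane section of a canonical threefold point, which does not follow from formal adjunction.

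Second, your alternative route claims that du Val singularities have $\crex = 0$. This is false: every exceptional curve of the minimal resolution of a du Val surface singularity has discrepancy $0$, so $\crex(x,H)>0$ whenever $x\in H$ is du Val and singular. The equivalence $\crex=0\Leftrightarrow$ cDV in Theorem~\ref{thm:crex-cDV} is a threefold statement and has no surface analogue, so producing a crepant exceptional divisor on $\widetilde H\to H$ does not rule out $H$ being du Val. What does survive from your outline is the dichotomy step: if a general $H$ were du Val then $x\in X$ would be cDV by definition, hence a hypersurface, contradicting the hypothesis. But to conclude that the only remaining possibility is elliptic you still need Reid's $p_g\leq 1$ bound, which is the actual content being cited.
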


\subsection{Gorenstein elliptic surface singularities}

Throughout this subsection, let $x\in X$ be a Gorenstein elliptic surface singularity. Results in this subsection are mainly from \cite{Lau77}; see also \cite{Rei76} and \cite[Section 4.4]{KM98}.

\begin{thm}[{\cite[Proposition 3.5]{Lau77}}]\label{thm:laufer-resol}
 Let $f:\tX \to X$ be a minimal resolution.
 Let $E_1,\cdots, E_l$ be all the  exceptional curves of $f$. Then either $f$ is a log resolution with only rational exceptional curves, or one of the following five cases occurs. 
 \begin{enumerate}
     \item $l=1$ and $E_1$ is a smooth elliptic curve.
     \item $l=1$ and $E_1$ is a one-nodal rational curve.
     \item $l=1$ and $E_1$ is a one-cuspidal rational curve.
     \item $l = 2$ and $E_1+E_2$ is the sum of two smooth rational curves intersecting at a tacnode.
     \item $l=3$ and $E_1+E_2+E_3$ is the sum of three smooth rational curves intersecting at a common ordinary triple point.
 \end{enumerate}
\end{thm}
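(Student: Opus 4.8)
The plan is to analyze the geometry of the exceptional divisor $E = \sum E_i$ of the minimal resolution $f : \tX \to X$, using the fact that $x \in X$ is Gorenstein and elliptic. The key numerical input is that, since $x \in X$ is Gorenstein, the canonical sheaf is trivial in a neighbourhood of $x$, so $K_{\tX} = f^* K_X + Z_K$ for an effective $f$-exceptional divisor $Z_K$ (the \emph{canonical cycle}), and by adjunction on the fundamental cycle $Z$ one gets $p_a(Z) = 1$ since the singularity is elliptic (minimally elliptic in Laufer's sense once we know it is Gorenstein — this is where \cite{Lau77} enters). First I would recall that on the minimal resolution there are no $(-1)$-curves, so every $E_i$ has $E_i^2 \leq -2$, and that by Artin's criterion the singularity being rational would force $p_a(Z) = 0$; hence ellipticity gives $p_a(Z) = 1$ exactly.

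The heart of the argument is the classification of configurations with $p_a(Z) = 1$. I would split into cases according to $l = \#\{E_i\}$ and the reducedness/arithmetic genus of the components:

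\begin{itemize}
\item If some component $E_i$ has $p_a(E_i) = 1$, then it must be a smooth elliptic curve, a nodal rational curve, or a cuspidal rational curve (the three possibilities for an arithmetic-genus-one curve), and an additivity/adjunction computation on $Z$ forces $Z = E_i$ and $l = 1$: any further component would push $p_a(Z)$ above $1$ or contradict minimality. This yields cases (1)--(3).
\item If every $E_i$ is rational and smooth, then $p_a(Z) = 1$ must come entirely from the intersection/dual-graph combinatorics: writing $p_a(Z) = 1 + \tfrac12(Z^2 + Z\cdot K_{\tX})$ and using $p_a(Z) = 1$ together with $Z \geq E_i$ effective and the negative-definiteness of the intersection form, one shows the only ways to gain a ``$+1$'' are a single tacnodal intersection of two components ($l = 2$, case (4)) or a common triple point of three components ($l = 3$, case (5)); anything else is either a log resolution with rational curves (the excluded generic case) or violates $p_a(Z) = 1$ / negative-definiteness.
\end{itemize}

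In carrying this out I would lean directly on \cite[Section 3]{Lau77}: Laufer computes the fundamental cycle and the possible ``minimally elliptic'' dual graphs, and Theorem \ref{thm:laufer-resol} is essentially his Proposition 3.5 transcribed, so much of the case analysis can be cited rather than redone. The step I expect to be the main obstacle is the second case above — ruling out all smooth-rational configurations except the tacnode and the triple point. This requires a careful simultaneous use of three constraints (negative-definiteness of the intersection matrix on the minimal resolution, $p_a(Z) = 1$ on the fundamental cycle, and the non-existence of $(-1)$-curves), and the bookkeeping is delicate because one must also correctly identify when the resolution fails to be a log resolution (i.e.\ when a non-normal-crossing singularity of $E$ is forced). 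I would handle this by reducing to the minimally-elliptic case via \cite{Lau77} and then quoting Laufer's explicit enumeration, verifying only that the non-log-resolution configurations on his list are exactly (4) and (5).
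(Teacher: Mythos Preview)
The paper does not give its own proof of this statement: it is quoted verbatim as \cite[Proposition 3.5]{Lau77} and used as a black box. Your proposal is a reasonable outline of Laufer's original argument, and you yourself note that the case analysis would ultimately be cited from \cite{Lau77} rather than redone; so there is nothing to compare beyond observing that both you and the paper defer to Laufer.
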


\begin{thm}[{\cite[Theorem 3.13]{Lau77}}]\label{thm:Laufer-blowup}
Denote by $e:=-(Z^2)$ where $Z$ is a fundamental cycle on some resolution of $x\in X$. If $e\leq 3$, then $x\in X$ is a hypersurface singularity. If $e \geq 3$, then $\Bl_x X\to X$ is the canonical modification. 
\end{thm}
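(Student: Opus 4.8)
The plan is to follow Laufer's structure theory for minimally elliptic singularities. Let $f\colon \tX\to X$ be the minimal resolution, let $E_1,\dots,E_l$ be its exceptional curves, and let $Z=\sum_i m_i E_i$ be the fundamental cycle. The first step is to reduce to, and exploit, the minimally elliptic situation: since $x\in X$ is Gorenstein and elliptic, Laufer's criterion ($\mathcal O_{X,x}$ Gorenstein together with $p_a(Z)=1$) shows $x\in X$ is minimally elliptic, and moreover the canonical cycle $Z_K$ --- determined by $K_{\tX}=f^*K_X-Z_K$ --- coincides with the fundamental cycle $Z$. Since $f^*K_X\cdot E_i=0$ for all $i$, this gives $K_{\tX}\cdot E_i=-Z\cdot E_i\ge 0$: the canonical divisor of the minimal resolution is already nef over $X$, and the curves on which it is trivial are precisely those with $Z\cdot E_i=0$.

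The second step constructs the canonical modification. Let $\Gamma\subseteq\bigcup_i E_i$ be the union of the curves $E_i$ with $Z\cdot E_i=0$. The intersection form on the exceptional locus is negative definite, so $\Gamma$ is contractible; let $g\colon Y\to X$ be the resulting birational morphism. Because $K_{\tX}$ is numerically $g$-trivial along $\Gamma$ and $\tX$ is smooth, each connected component of $\Gamma$ contracts to a Gorenstein canonical --- hence Du Val --- surface singularity, so $Y$ has only Du Val singularities and the contraction $\tX\to Y$ is crepant. On the other hand $K_Y$ has positive degree against every $g$-exceptional curve, since these are the images of the $E_i$ with $Z\cdot E_i<0$; by the relative Nakai criterion $K_Y$ is $g$-ample. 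Thus $g\colon Y\to X$ is the canonical modification of $x\in X$.

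The crux is the third step: computing the maximal ideal. Using the vanishing $H^1(\tX,\mathcal O_{\tX}(-kZ))=0$ for $k\ge 1$ (a Grauert--Riemenschneider-type statement available here because $p_a(Z)=1$) together with Laufer's analysis of where the sections of $\mathcal O_{\tX}(-Z)$ fail to generate along the $E_i$, one shows that when $e=-Z^2\ge 2$ one has $\fm_x\mathcal O_{\tX}=\mathcal O_{\tX}(-Z)$, that $\mathcal O_{\tX}(-Z)$ is globally generated, and that $\fm_x^k=H^0\bigl(\tX,\mathcal O_{\tX}(-kZ)\bigr)$ for all $k$; in particular $\mult(x,X)=-Z^2=e$. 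Counting $\dim_{\bk}\fm_x/\fm_x^2=h^0(\mathcal O_{\tX}(-Z))-h^0(\mathcal O_{\tX}(-2Z))$ via these vanishings gives that the embedding dimension of $x\in X$ equals $\max(3,e)$ (the case $e=1$, which is of $\widetilde E_8$-type, is checked separately and also has embedding dimension $3$). Consequently, if $e\le 3$ the embedding dimension is $\le 3$, so $x\in X$ is a hypersurface singularity. If instead $e\ge 3$ (indeed already $e\ge 2$), the identities $\fm_x^k\mathcal O_{\tX}=\mathcal O_{\tX}(-kZ)$ and $\fm_x^k=H^0(\mathcal O_{\tX}(-kZ))$ identify $\Bl_x X=\Proj\bigoplus_{k\ge 0}\fm_x^k$ with the normal variety obtained from $\tX$ by the $f$-semiample linear system $|{-Z}|$, i.e.\ by contracting exactly the curves $E_i$ with $Z\cdot E_i=0$. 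That is the same contraction as $g\colon Y\to X$, so $\Bl_x X\to X$ is the canonical modification.

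I expect the main obstacle to be the third step --- the identification $\fm_x\mathcal O_{\tX}=\mathcal O_{\tX}(-Z)$ and the consequent computation of the embedding dimension and of $H^0(\tX,\mathcal O_{\tX}(-kZ))$. This is the technical heart of Laufer's paper: it requires the vanishing theorems on the non-proper surface $\tX$ and a delicate case analysis of the base locus of $|{-Z}|$ along the components of the exceptional divisor, separating out the low-degree exceptions ($e=1$). Once this is in hand, matching $\Bl_x X$ with the canonical modification is essentially formal, since both are the contraction of $\tX$ along the locus $\{Z\cdot E_i=0\}$ determined by $K_{\tX}\equiv_f -Z$.
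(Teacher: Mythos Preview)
The paper does not give its own proof of this statement: Theorem~\ref{thm:Laufer-blowup} is quoted in the preliminaries as \cite[Theorem~3.13]{Lau77} and used as a black box. So there is no in-paper argument to compare against; what you have written is essentially a sketch of Laufer's original proof, and as such it is on the right track. The identification $K_{\tX}\equiv_f -Z$ for minimally elliptic singularities, the construction of the canonical modification by contracting the $Z$-trivial curves, and the computation of the embedding dimension via $\fm_x\cO_{\tX}=\cO_{\tX}(-Z)$ together with the vanishing $H^1(\cO_{\tX}(-kZ))=0$ are exactly the ingredients Laufer uses.

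One small correction: your parenthetical ``indeed already $e\ge 2$'' overreaches. Laufer's Theorem~3.13 gives $\fm_x\cO_{\tX}=\cO_{\tX}(-Z)$ for $e\ge 2$, but global generation of $\cO_{\tX}(-Z)$ is only asserted for $e\ge 3$; in the borderline case $e=2$ the line bundle $\cO_{\tX}(-Z)$ restricts to a degree-$2$ bundle on the (possibly nodal or cuspidal) elliptic tail and the base-locus analysis is more delicate. This does not affect the theorem as stated, which only claims the blow-up is the canonical modification for $e\ge 3$, but you should drop the stronger claim. Otherwise your outline is sound, and your identification of the ``third step'' as the technical heart is accurate.
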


According to \cite{Lau77}, let $Z$ denote the fundamental cycle on a minimal resolution $f:\tX \to X$. Then $\chi(Z,\cO_Z) = 0$ and for any $0<Z'< Z$ we have $\chi(Z',\cO_{Z'}) > 0$. Moreover, we have $K_{\tX} + Z = f^* K_X$. 

%Next, we look at the case when $f$ is a log resolution. Then we know that either $Z$ is a smooth elliptic curve, or $Z$ is a reduced cycle of $\bP^1$'s, or $Z$ is non-reduced with $Z_{\red}$ a tree of $\bP^1$'s. 

\begin{prop}\label{prop:fund-cycle-2}
    Suppose a minimal resolution $f:\tX \to X$ has precisely two exceptional curves $E_1$ and $E_2$. Then the fundamental cycle $Z =  E_1+E_2$ is reduced, both $E_1$ and $E_2$ are smooth rational curves, and $(E_1\cdot E_2) = 2$. In particular, the intersection $E_1\cap E_2$ is either transversal or tacnodal.
\end{prop}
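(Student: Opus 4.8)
The plan is to combine Laufer's classification of minimal resolutions (Theorem~\ref{thm:laufer-resol}) with the two numerical properties of the fundamental cycle recalled above — namely $\chi(\cO_Z)=0$ together with $\chi(\cO_{Z'})>0$ for every cycle $0<Z'<Z$ — and the Gorenstein relation $K_{\tX}+Z=f^*K_X$. Since $l=2$, Theorem~\ref{thm:laufer-resol} shows that either $f$ is a log resolution with rational exceptional curves or we are in case (4); in both situations $E_1$ and $E_2$ are smooth rational curves (alternatively, this follows at once from $\chi(\cO_{E_i})>0$, which holds because $0<E_i<Z$ as the other coefficient is $\ge 1$, forcing $p_a(E_i)=0$). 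Write $Z=a_1E_1+a_2E_2$ with $a_1,a_2\ge 1$, and set $m:=(E_1\cdot E_2)$; since $f^{-1}(x)$ is connected, $m\ge 1$.

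First I would extract the basic numerical identity: intersecting $K_{\tX}+Z=f^*K_X$ with $E_i$ gives $K_{\tX}\cdot E_i=-Z\cdot E_i$, and since $E_i\cong\bP^1$, adjunction on the smooth surface $\tX$ gives $K_{\tX}\cdot E_i=-E_i^2-2$; hence $E_i^2=Z\cdot E_i-2$. Minimality of the resolution moreover forces $E_i^2\le -2$.

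Next I would prove $Z$ is reduced by contradiction. After relabelling assume $a_1=\max(a_1,a_2)\ge 2$. Then $0<E_1+E_2<Z$, so $\chi(\cO_{E_1+E_2})>0$; the structure sequence $0\to\cO_{E_2}(-E_1|_{E_2})\to\cO_{E_1+E_2}\to\cO_{E_1}\to 0$ gives $\chi(\cO_{E_1+E_2})=2-m$, hence $m=1$. Substituting $m=1$ into $E_1^2=Z\cdot E_1-2=a_1E_1^2+a_2-2$ yields $(a_1-1)E_1^2=2-a_2$; combined with $E_1^2\le -2$ and $a_1-1\ge 1$ this forces $a_2\ge 2a_1$, contradicting $a_1\ge a_2$. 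Therefore $Z=E_1+E_2$, and then $0=\chi(\cO_Z)=2-m$ gives $(E_1\cdot E_2)=2$. Finally, two smooth curves with intersection number $2$ meet either transversally at two distinct points or tangentially at a single point (a tacnode), which is the stated dichotomy and is consistent with the two cases of Theorem~\ref{thm:laufer-resol}.

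I do not anticipate a genuine obstacle, since everything reduces to Riemann--Roch on curves together with the defining properties of the fundamental cycle; the only point requiring care is the bookkeeping of the inequalities in the non-reduced case — one must pass to the index realizing $\max(a_1,a_2)$ before invoking $E_i^2\le -2$ — and one should note that $\chi(\cO_{E_1+E_2})=2-m$ regardless of whether $E_1\cap E_2$ is transversal or tangent, since in the tangent case $E_1|_{E_2}$ is the length-two divisor $2p$ on $E_2\cong\bP^1$.
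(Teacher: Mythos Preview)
Your argument is correct and follows the same overall strategy as the paper: invoke Laufer's classification to get smooth rational $E_i$, then use the $\chi$-properties of the fundamental cycle to force $Z=E_1+E_2$ and $(E_1\cdot E_2)=2$. The only difference lies in how the non-reduced case is ruled out. After obtaining $m=1$ from $\chi(\cO_{E_1+E_2})>0$, the paper simply observes that $(E_1+E_2)\cdot E_i=1+E_i^2<0$, so $E_1+E_2$ already satisfies the anti-nefness defining the fundamental cycle, contradicting its minimality; this uses only the definition of $Z$. You instead bring in the Gorenstein relation $K_{\tX}+Z=f^*K_X$ to derive $E_i^2=Z\cdot E_i-2$ and then extract the inequality $a_2\ge 2a_1$. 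Both work, but the paper's route is a bit shorter and does not need the crepancy identity at this step; your route has the mild advantage that it makes the numerics completely explicit and does not appeal to the characterization of $Z$ as the minimal anti-nef cycle.
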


\begin{proof}
By Theorem \ref{thm:laufer-resol}, we know that $f$ is either a log resolution with only rational exceptional curves or the  exceptional case (4). Thus we may assume that we are in the former case as (4) satisfies the statement.

Assume to the contrary that $Z$ is not reduced. Then we can take $Z'$ to be the reduced cycle whose support is the same as $Z$. Then we have $\chi(Z',\cO_{Z'})>0$ which implies that $Z'$ is a tree of two $\bP^1$'s, i.e.\ $(E_1\cdot E_2)=1$. Then $Z' = E_1+E_2$ is a fundamental cycle as it satisfies $(Z'\cdot E_i) = 1+(E_i)^2 <0$, a contradiction to $Z'<Z$. Thus $Z$ is reduced, both $E_1$ and $E_2$ are $\bP^1$'s, and $(E_1\cdot E_2)=2$ by the condition that $\chi(Z,\cO_Z) = 0$.
\end{proof}

%The starting point is the following lemma.

\begin{comment}
A useful observation is the following reduction to K-semistable Fano cone singularities.

\begin{prop}
Conjecture \ref{conj:nv-can} can be deduced from the case when additionally $x\in X$ is a K-semistable Fano cone singularity, or in particular, $x\in X$ admits a good $\bG_m$-action.
\end{prop}

\begin{proof}
Let $x\in X$ degenerates to $x_0\in X_0$ being the K-semistable degeneration produced by a $\hvol$-minimizer by \cite{XZ25}. Then we have $\hvol(x,X) = \hvol(x_0, X_0)$. If $x_0\in X_0$ is not Gorenstein, then its index $1$ cover $\tx_0 \in \tX_0$ is either smooth or $\hvol(\tx_0, \tX_0) \leq 16$ by \cite{LX19}. In the former case, if $\hvol(x,X)>9$ then $x_0\in X_0$ has to be a quotient singularity of order $2$ which implies that either it is Gorenstein or it is rigid, a contradiction. In the latter case, we have $\hvol(x_0, X_0)\leq 8$. Hence we may assume $x_0\in X_0$ is Gorenstein. Clearly it is not a hypersurface singularity by the upper semicontinuity of embedding dimensions.
\end{proof}
\end{comment}

\section{Factorizing contractions by Artin approximation}\label{sec:Artin}

\begin{comment}
\begin{rem}
\textbf{Auxilliary results.} In the above proof, we need the following facts: (1) the fact on flopping contractions, if $Y\to X$ is a small $\bQ$-factorialization for Gorenstein canoncal 3-folds, then we can contract each irreducible exceptional curve only. 
\YL{Toda's \href{https://arxiv.org/pdf/0909.5129}{paper} implicitly said that extremal flopping contraction may have multiple exceptional curves whose reduced scheme form a tree of $\bP^1$'s. On the other hand, if one pass to the completion, then this may be true, see \href{https://arxiv.org/pdf/math/0207170}{Van Den Burgh}, 3.4.3. Then one can use Lyu--Murayama. Only problem is then one needs to work in the category of complete local rings.}

(2) If $Y\to X$ extracts a divisor $E$ (assume $E$ is slc Gorenstein Fano, not nec. irreducible), then any nef line bundle on $E$ can extend to $Y$ which is relative semiample over $X$. 
\YL{this seems to be true, based on topological arguments matching $\Pic$ and $H^2(\cdot, \bZ)$. Maybe one can also work over the completion, see (1).}
\end{rem}
\end{comment}

The goal of this section is to use Artin approximation theorem \cite{Art69} and vanishing theorems to show that for certain projective birational morphisms $f: Y\to X$ whose exceptional locus has a line bundle that is nef but not necessarily ample, we can extend it to a semiample line bundle on an \'etale neighborhood which gives a factorization of $f$ after an \'etale base change.

\begin{prop}\label{prop:Pic-lc}
Let $x\in X$ be a klt singularity over $\bk= \bC$.
Let $f:(Y,E)\to X$ be an lc blow-up. %a Cartier lc blow-up or an integrally $\bQ$-Cartier plt blow-up. %\footnote{\YL{Jakub told me that we have the desired vanishing as long as $\lfloor -kE|_E\rfloor$ is $\bQ$-Cartier for every $k$}}
Let $L_E$ be a line bundle over $E$.  Then there exists an \'etale morphism $(x'\in X')\to (x\in X)$ and a line bundle $L'$ on $Y':=Y\times_X X'$ such that $L'|_{E'} \cong L_E$ under the natural identification of $E':=E\times_X X'$ and $E$. Moreover, if $L_E$ is nef, then $L'$ is semiample over $X'$. 
\end{prop}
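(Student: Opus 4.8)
The strategy is in two stages: first extend the line bundle $L_E$ from $E$ to an \'etale neighborhood, then upgrade nefness to relative semiampleness via vanishing.

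For the extension, I would work with the infinitesimal neighborhoods $E_k := V(\cI^{k+1})$ where $\cI = \cO_Y(-E)$ (or, if $E$ is not Cartier, the ideal sheaf of $E$ with its reduced structure). The obstruction to lifting a line bundle from $E_k$ to $E_{k+1}$ lies in $H^2(E, \cI^{k+1}/\cI^{k+2})$, and the ambiguity in a lift lies in $H^1(E, \cI^{k+1}/\cI^{k+2})$. The key input is that these cohomology groups vanish for $k \geq 0$: since $(Y,E)$ is lc and $-E$ is ample over $X$, adjunction gives that $E$ is slc and the relevant sheaves $\cI^{k+1}/\cI^{k+2}$ are (up to the slc structure) of the form $\cO_E(-\text{ample}) \otimes \omega_E^{-1} \otimes \omega_E$, so one applies Fujino's generalization of Kawamata--Viehweg vanishing for slc pairs to conclude $H^i(E, \cI^{k+1}/\cI^{k+2}) = 0$ for all $i \geq 1$. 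Hence $L_E$ lifts uniquely and compatibly to every $E_k$, i.e.\ it lifts to the formal completion $\widehat{Y}$ of $Y$ along $E$, equivalently to a line bundle $\hat{L}$ on the formal scheme $\widehat{\cO_{X,x}}$-version of $Y$. Then I would invoke Artin's algebraization/approximation theorem \cite{Art69}: the formal solution $(\widehat{X}, \hat{L})$ of the functor ``flat family with a line bundle restricting correctly'' is approximated by an \'etale-local solution, producing $(x' \in X') \to (x \in X)$ \'etale and a line bundle $L'$ on $Y' = Y \times_X X'$ with $L'|_{E'} \cong L_E$. (Here one uses that $E' \cong E$ since $X' \to X$ is an isomorphism over a neighborhood of $x$ after shrinking, $E$ being the full fiber.)

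For the semiampleness upgrade, assume $L_E$ is nef on $E$. Since $E$ is projective and $-E|_Y$ is ample over $X$, for $m \gg 0$ the line bundle $L_E \otimes \cO_E(-mE|_E)$... actually more directly: I would show that $L'$ (twisted suitably) is globally generated relative to $X'$ by proving that $f'_* L'^{\otimes d}$ generates $L'^{\otimes d}$ for $d \gg 0$ near $x'$, using the theorem on formal functions together with the vanishing $H^1(Y', \cI'^{k} \otimes L'^{\otimes d}) = 0$. Concretely, on $E$ the nef bundle $L_E$ plus the relatively ample $-E|_E$ shows $L_E^{\otimes d} \otimes \cO_E(-E|_E)$ is ample for all $d \geq 1$; combined with the slc Kawamata--Viehweg vanishing this gives $H^i(E, L_E^{\otimes d} \otimes \cI^{k}/\cI^{k+1}) = 0$ for $i \geq 1$, $k \geq 1$, $d \gg 0$. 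Then a standard d\'evissage up the $E_k$ tower plus formal functions shows $L'^{\otimes d}$ is $f'$-globally generated near $x'$, hence $L'$ is semiample over $X'$ (after further shrinking $X'$).

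\textbf{Main obstacle.} The technical heart is the vanishing statement $H^i(E, \cI^{k}/\cI^{k+1}) = 0$ and its twisted version; this requires care because $E$ need only be slc (not normal), so one cannot use the classical Kawamata--Viehweg vanishing and must instead appeal to Fujino's version \cite{Fuj14} for semi-log-canonical pairs, and one must correctly identify the conormal-power sheaves $\cI^{k}/\cI^{k+1}$ in terms of the adjunction data when $E$ is merely $\bQ$-Cartier rather than Cartier (as the commented-out Lemma in the source suggests, this splits into a Cartier case and an integrally $\bQ$-Cartier plt case, or in dimension $\leq 3$ is automatic since $E$ is then a $\bQ$-factorial surface). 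A secondary subtlety is the bookkeeping in Artin approximation: one must set up the right deformation-type functor (flat families together with a line bundle) so that the formal lift constructed above is literally a formal point of that functor, and check that ``restricts to $L_E$ on $E$'' is an \'etale-local condition that approximation preserves.
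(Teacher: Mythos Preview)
Your overall architecture—lift $L_E$ through the infinitesimal thickenings and then invoke Artin approximation to pass from henselian to \'etale—matches the paper's. For semiampleness, however, the paper simply observes that $L'$ is nef over $X'$ (all contracted curves lie in $E'$) and applies the basepoint free theorem directly, since $(Y',(1-\epsilon)E')$ is klt and $-K_{Y'}-(1-\epsilon)E'$ is ample over $X'$; this is shorter and cleaner than your d\'evissage-plus-formal-functions sketch.

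The substantive gap is in the extension step. Your plan is to kill the lifting obstructions $H^2(E,\cI^{k}/\cI^{k+1})$ via Fujino's slc vanishing. As you yourself flag, this only goes through when $E$ is Cartier (so that $\cI^{k}/\cI^{k+1}\cong\cO_E(-kE|_E)$ is an honest line bundle) or in the ``integrally $\bQ$-Cartier plt'' situation—indeed the corresponding lemma in the source is commented out for this reason. For a general lc blow-up $E$ is only $\bQ$-Cartier and may be reducible and non-normal, and your remark that ``in dimension $\leq 3$ this is automatic since $E$ is $\bQ$-factorial'' applies only to plt blow-ups, not to the non-normal slc $E$ that actually arise (e.g.\ the $\bF_{m-4;2}$ cases later in the paper). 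The paper sidesteps the conormal issue entirely with a topological argument over $\bC$: on a contractible analytic neighborhood, the exponential sequence together with $R^jf_*\cO_Y=0$ for $j>0$ (Kawamata--Viehweg on the klt pair $(Y,(1-\epsilon)E)$) and $H^j(E,\cO_E)=0$ for $j>0$ (Fujino on the slc log Fano $(E,\Delta_E)$) identifies both $\Pic(Y^{\an})$ and $\Pic(E^{\an})$ with $H^2(E^{\an},\bZ)$, so the restriction $\Pic(Y^{\an})\to\Pic(E^{\an})$ is an isomorphism and hence $\varprojlim_k\Pic(E_k)\to\Pic(E)$ is surjective; then Artin's dense-image result for $\Pic(Y_{R^h})\to\varprojlim_k\Pic(E_k)$ finishes. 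The only vanishing inputs involve the structure sheaves $\cO_Y$ and $\cO_E$, so no control on conormal powers is needed—this is exactly what lets the argument work without any Cartier hypothesis on $E$.
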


\begin{proof}
Denote by $(R,\fm) := (\cO_{X,x}, \fm_{X,x})$ and $Y_R:= Y\times_X \Spec R$. Let $R^h$ be the Henselization of $R$ and $Y_{R^h}:= Y\times_X \Spec R^h$. Denote by $X_i:= \Spec R/\fm^i$ and $Y_i:= Y\times_X X_i$. 
By \cite[Theorem 3.5]{Art69}, the restriction map $\Pic(Y_{R^h})\to \varprojlim_i \Pic(Y_i)$ has a dense image. Let $E_k$ be the closed subscheme of $Y$ with ideal sheaf $\cI_k=\cO_Y(-kE)$. Denote by $\cI:=f^{-1} \fm$. Since the cosupport of $\cI$ and $\cI_1$ are the same, we have $\sqrt{\cI} = \cI_1$ which implies that the direct systems $\{E_k\}_k$ and $\{Y_i\}_i$ are cofinal. Thus we have $\varprojlim_i \Pic(Y_i) = \varprojlim_k \Pic(E_k)$ and hence the restriction map $\Pic(Y_{R^h})\to \varprojlim_k \Pic(E_k)$ has a dense image.

% Next, we show that the inverse system $\{\Pic(E_k)\}_{k\geq 1}$ has surjective maps. Indeed, since $\cI_k^2 \subset \cI_{k+1}\subset \cI_k$ for $k\geq 1$, there is a short exact sequence 
% \[
% 0 \to \cI_k/\cI_{k+1}\to \cO_{E_{k+1}}^{\times}\to \cO_{E_k}^{\times} \to 1.
% \]
% By taking long exact sequence of cohomology, we get
% \[
% H^1(E_{k+1},  \cO_{E_{k+1}}^{\times}) \to H^1(E_{k+1},  \cO_{E_{k+1}}^{\times}) \to H^2(Y, \cI_k/\cI_{k+1}) =0,
% \]
% where the vanishing of the last term follows from Lemma \ref{lem:fujino-vanishing}. Thus $\Pic(E_{k+1})\to \Pic(E_k)$ is surjective whenever $k\geq 1$. 

Next, we claim that $\varprojlim_{k}\Pic(E_k)\to \Pic(E)$ is surjective. Let $X^{\an}$ be a contractible complex analytic neighborhood of $x\in X$. Denote by $f^{\an}: (Y^{\an}, E^{\an})= (Y,E)\times_X X^{\an} \to X^{\an}$. Note that we equip complex analytic topology on spaces $X^{\an},  Y^{\an}, E^{\an}$, etc. Since $(Y,(1-\epsilon)E)$ is klt and $-K_Y - (1-\epsilon)E$ is ample over $X$ for $0<\epsilon\ll 1$, by Kawamata--Viehweg vanishing theorem we know that $R^j f_* \cO_Y = 0$ for $j>0$ which implies  $R^j f^{\an}_*\cO_{Y^{\an}} = 0$ by GAGA. Thus applying the exponential sequence as in \cite[Proof of Lemma 12.1.1]{KM92} we get $\Pic(Y^{\an}) \cong H^2(E^{\an},\bZ)$.\footnote{This argument is suggested by J\'anos Koll\'ar.} Let $\Delta_E$ be the different divisor of $(Y,E)$ along $E$. Then by adjunction we know that $-K_E-\Delta_E$ is ample and $(E,\Delta_E)$ is slc. Thus by Fujino's generalization of Kawamata--Viehweg vanishing theorem \cite[Theorem 1.7]{Fuj14} we know that $H^j(E,\cO_E) = 0$ for $j>0$ which implies $H^j(E^{\an}, \cO_{E^{\an}}) = 0$. Thus applying the exponential sequence again to $E^{\an}$ yields $\Pic(E^{\an})\cong H^2(E^{\an},\bZ)$. As a result, the restriction map $\Pic(Y^{\an}) \to \Pic(E^{\an})$ is an isomorphism. In particular, $\varprojlim_k\Pic(E^{\an}_k) \to \Pic(E^{\an})$ is surjective. Thus the claim follows from GAGA.

By the claim and the dense image result we know that the restriction map $\Pic(Y_{R^h})\to \Pic(E)$ is surjective. Since $\Spec R^h$ is the inverse limit of $\Spec S$ where $S$ is a local \'etale $R$-algebra, we know that $\Pic(Y_{R^h})$ is the direct limit of $\Pic(Y_S)$ where $Y_S:= Y\times_X \Spec S$. Thus for any line bundle $L_E$ over $E$ there exists a local \'etale $R$-algebra $S$ and a line bundle $L_S$ over $Y_S$ such that $L_S|_{E} \cong L_E$. Since $S$ is essentially of finite type over $\bk$ with residue field $\bk$, there exists an \'etale morphism $(x'\in X')\to (x\in X)$ between klt singularities such that $L_S$ extends to a line bundle $L'$ on $Y'= Y\times_X X'$. Finally, if $L_E$ is nef, then $L'$ is nef over $X'$ as $Y'\to X'$ is isomorphic away from $x$. Thus the semiampleness of $L'$ follows from the basepoint free theorem as $(Y,(1-\epsilon)E)$ is klt and $-K_Y - (1-\epsilon)E$ is ample over $X$ for $0<\epsilon\ll 1$. 
\end{proof}

\begin{prop}\label{prop:Pic-flop}
Let $x\in X$ be a klt singularity.
Let $f:Y\to X$ be a projective birational morphism from a normal variety $Y$ of dimension at least $3$. Suppose $f$ is an isomorphism over $X\setminus \{x\}$ and $f^{-1}(x)$ has dimension $1$. Let $C$ be an irreducible component of $f^{-1}(x)$. Then there exists an \'etale morphism $(x'\in X')\to (x\in X)$ with $Y':=Y\times_X X'$ and a projective birational morphism $g': Y'\to Z'$ over $X'$ to a normal variety $Z'$ such that $g'$ only contracts $C':= C\times_X X'$ and is isomorphic elsewhere.
\end{prop}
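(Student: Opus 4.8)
The plan is to follow the proof of Proposition~\ref{prop:Pic-lc} almost verbatim; the argument is in fact \emph{lighter} here because $E:=f^{-1}(x)$, taken with its reduced structure, is now a curve rather than a divisor, but one extra ingredient — a well-chosen line bundle on $E$ — is needed to control which curve gets contracted. First observe that since $\dim Y\geq 3$ and $\dim E=1$, the morphism $f$ is small, so (using that $X$ is klt, hence $\bQ$-Gorenstein) $K_Y=f^*K_X$ is $\bQ$-Cartier, $Y$ is klt, and the same remains true after any \'etale base change. Now write $E=C\cup C_1\cup\cdots\cup C_s$ for the decomposition into irreducible components; for each $i$ I choose a smooth point $p_i$ of $E$ lying on $C_i$ (such a point exists since $E$ is reduced, and it automatically lies off $C$ and off every $C_j$ with $j\neq i$), and set $L_E:=\cO_E(p_1+\cdots+p_s)$, with the convention $L_E=\cO_E$ when $E=C$. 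Then $L_E$ is nef, $\deg(L_E|_{C_i})=1$ for all $i$, and $\deg(L_E|_C)=0$; thus $L_E$ has degree zero on $C$ and positive degree on every other component of $E$.

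The next step is to extend $L_E$ to an \'etale neighborhood of $x$, by the argument of Proposition~\ref{prop:Pic-lc}. Put $(R,\fm):=(\cO_{X,x},\fm_{X,x})$ with henselization $R^h$, set $Y_{R^h}:=Y\times_X\Spec R^h$, and let $E_k:=V(\cI_E^k)\subset Y$ be the infinitesimal thickenings of $E$. As in loc.\ cit., the systems $\{E_k\}_k$ and $\{Y\times_X\Spec(R/\fm^i)\}_i$ are cofinal, so \cite[Theorem~3.5]{Art69} shows that $\Pic(Y_{R^h})\to\varprojlim_k\Pic(E_k)$ has dense image. This is the point where the present proof is easier than that of Proposition~\ref{prop:Pic-lc}: since $\dim E=1$, one has $H^2(E,\cF)=0$ for every coherent sheaf $\cF$ on $E$, so line bundles lift along each nilpotent thickening $E_k\hookrightarrow E_{k+1}$, the transition maps of the system $\{\Pic(E_k)\}_k$ are surjective, and hence $\varprojlim_k\Pic(E_k)\to\Pic(E_1)=\Pic(E)$ is surjective — in particular no analytic or GAGA argument (and no hypothesis $\bk=\bC$) is needed. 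Combining this with the density of the image gives that $\Pic(Y_{R^h})\to\Pic(E)$ is surjective, so $L_E$ lifts to a line bundle on $Y_{R^h}$; spreading this out over the local \'etale $R$-algebras whose colimit is $R^h$, exactly as in the last paragraph of the proof of Proposition~\ref{prop:Pic-lc}, produces an \'etale morphism $(x'\in X')\to(x\in X)$ of klt singularities together with a line bundle $L'$ on $Y':=Y\times_X X'$ restricting to $L_E$ under the canonical identifications $E':=E\times_X X'\cong E$ and $C':=C\times_X X'\cong C$.

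It remains to produce the contraction from $L'$. Writing $f':Y'\to X'$, we have $K_{Y'}=(f')^*K_{X'}$, so $L'-K_{Y'}$ is numerically equivalent to $L'$ over $X'$; in particular it is $f'$-nef, since its intersection with a component $\Gamma$ of $E'$ equals $\deg(L_E|_\Gamma)\geq 0$ and the components of $E'$ are the only $f'$-contracted curves, and it is $f'$-big because $f'$ is birational. As $Y'$ is klt, the relative basepoint-free theorem shows $L'$ is semiample over $X'$; let $g':Y'\to Z'$ be the induced contraction over $X'$, with $Z'$ normal and $L'=(g')^*A'$ for some $A'$ ample over $X'$. A curve of $Y'$ is contracted by $g'$ exactly when $L'$ has degree zero on it; a curve not contracted by $f'$ dominates a curve of $X'$ and hence is not contracted by $g'$, whereas among the components of $E'$ only $C'$ has $L'$-degree zero. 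Therefore $g'$ contracts precisely $C'$ — to a point, since $C'$ is irreducible — and is an isomorphism away from $C'$, which is the desired factorization. The main point to appreciate, and the reason an \'etale base change is unavoidable, is that the line bundle needed to define such a contraction may genuinely fail to exist on $Y$ itself; supplying it after an \'etale base change is precisely the role of the Artin-approximation input of Section~\ref{sec:Artin}, after which the curve case here is notably lighter than the divisorial situation of Proposition~\ref{prop:Pic-lc}, the new work being concentrated in the elementary choice of $L_E$ and in the bookkeeping singling out $C'$ as the unique contracted curve.
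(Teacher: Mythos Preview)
Your proof is correct and follows essentially the same route as the paper's: Artin approximation gives dense image of $\Pic(Y_{R^h})$ in the inverse limit, Grothendieck vanishing in dimension one makes the transition maps in $\{\Pic(E_k)\}_k$ surjective (the paper does this in one step for an arbitrary pair of nilpotent thickenings rather than inductively via square-zero extensions, but the content is identical), the resulting surjectivity $\Pic(Y_{R^h})\twoheadrightarrow\Pic(E)$ is spread out to an \'etale neighborhood, and the line bundle $\cO_E(\sum p_i)$ together with the basepoint-free theorem produces the desired contraction. Your remark that no GAGA or complex-analytic input is needed here, in contrast to Proposition~\ref{prop:Pic-lc}, is also in line with the paper.
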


\begin{proof}
We use the same notation $(R,\fm), Y_R, R^h, Y_{R^h}, X_i, Y_i$ as in the first paragraph of the proof of Proposition \ref{prop:Pic-lc}. 
By \cite[Theorem 3.5]{Art69}, the restriction map $\Pic(Y_{R^h})\to \varprojlim_i \Pic(Y_i)$ has a dense image. 

Next, we show that if $\cJ_1\subset\cJ_2\subset \cO_Y$ are two ideal sheaves with $F_i = V(\cJ_i)$ such that $\Supp(F_1) = \Supp(F_2) = f^{-1}(x)$, then the restriction map $\Pic(F_1)\to \Pic(F_2)$ is surjective.  Indeed, since $\sqrt{\cJ_1} = \sqrt{\cJ_2}$, we know that $\cO_{F_1}^{\times}\to \cO_{F_2}^{\times}$ is surjective such that its kernel $\cF$ is a sheaf whose support has dimension $\leq 1$. Apply the long exact sequence of cohomology to the short exact sequence
\[
0\to \cF \to \cO_{F_1}^{\times}\to \cO_{F_2}^{\times}\to 1,
\]
we get 
\[
H^1(F_1, \cO_{F_1}^{\times}) \to H^1(F_2, \cO_{F_2}^{\times})\to H^2(Y, \cF) =0,
\]
where the vanishing of the last term follows from Grothendieck vanishing theorem. Thus  $\Pic(F_1)\to \Pic(F_2)$ is surjective.

In particular, the inverse system $\{\Pic(Y_i)\}_i$ has surjective maps. Hence $\Pic(Y_{R^h}) \to \Pic(Y_i)$ is surjective for every $i\in \bN$. Let $F$ be the reduced closed subscheme of $Y$ whose support is $f^{-1}(x)$. Then from earlier arguments we know that $\Pic(Y_i)\to \Pic(F)$  is surjective which implies that $\Pic(Y_{R^h}) \to \Pic(F)$ is surjective. 

Next, denote the irreducible components of $F$ by $C_1 = C, C_2, \cdots, C_l$. Let $L_{F}$ be a line bundle on $F$ such that $L_{F}|_C$ is trivial and $L_{F}|_{C_j}$ is ample for any $j\geq 2$. This can be done by choosing closed points $p_j\in C_j$ that are smooth in $F$ and set $L_{F}:= \cO_{F}(\sum_{j=2}^l p_j)$. Since $\Pic(Y_{R^h}) \to \Pic(F)$ is surjective, by the same argument as the last paragraph of the proof of Proposition \ref{prop:Pic-lc}, there exists an \'etale morphism $(x'\in X')\to (x\in X)$ and a line bundle $L'$ on $Y'=Y\times_X X'$ such that $L'|_{F'}\cong L_F$ under the natural identification of $F':=F\times_X X'$ and $F$. Since $L_F$ is nef, we know that $L'$ is nef over $X'$. Then by the assumption that $f$ is small, we have $K_Y = f^* K_X$ which implies that $K_{Y'} = {f'}^* K_{X'}$. Thus $L'-K_{Y'}$ is big and nef over $X'$ and $Y'$ has klt singularities. By the basepoint free theorem we have that $L'$ is semiample over $X'$ hence defines a projective birational morphism $g':Y'\to Z'$. Since $L_F$ is ample on every $C_j$ for $j\geq 2$ but trivial on $C$, we conclude that $g'$ only contracts $C'$. The proof is finished.
\end{proof}

The following proposition is suggested to the author by J\'anos Koll\'ar showing that  Proposition \ref{prop:Pic-flop} holds under a weaker assumption.
%     that $Y$ has rational singularities only (without klt assumptions on the singularities of $X$).\footnote{\YL{add a prop and proof.}}

\begin{prop}
    Let $x\in X$ be a singularity. Let $f: Y \to X$ be a projective birational morphism from a normal variety $Y$. Assume that $f^{-1}(x)$ has a $1$-dimensional irreducible component $C$, and $R^1f_* \cO_Y = 0$. Then there exists an \'etale morphism $(x'\in X')\to (x\in X)$ with $Y':=Y\times_X X'\xrightarrow{f'} X'$ and a projective birational morphism $g': Y'\to Z'$ over $X'$ to a normal variety $Z'$ such that  $C':= C\times_X X'$ is the only curve in $f'^{-1}(x')$ contracted by $g'$.
\end{prop}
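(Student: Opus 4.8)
The plan is to follow the proof of Proposition~\ref{prop:Pic-flop} almost verbatim, observing that the hypothesis ``$x\in X$ is klt'' entered there \emph{only} at the very end, through the smallness of $f$ (to get $K_{Y'}=f'^*K_{X'}$ and hence klt singularities on $Y'$) and the basepoint free theorem, and replacing that single step by an argument based on $R^1f_*\cO_Y=0$.

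First I would run the Artin-approximation part unchanged. With $(R,\fm)=(\cO_{X,x},\fm_{X,x})$, $Y_{R^h}=Y\times_X\Spec R^h$ and $Y_i=Y\times_X\Spec R/\fm^i$, \cite[Theorem~3.5]{Art69} gives that $\Pic(Y_{R^h})\to\varprojlim_i\Pic(Y_i)$ has dense image, and the surjectivity of $\Pic(F_1)\to\Pic(F_2)$ for closed subschemes sharing the support $f^{-1}(x)$ follows from Grothendieck vanishing exactly as in Proposition~\ref{prop:Pic-flop}; this never used the klt hypothesis. Hence $\Pic(Y_{R^h})\to\Pic(F)$ is surjective for $F:=f^{-1}(x)_{\red}$. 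Writing $F=C\cup C_2\cup\cdots\cup C_l$ for the decomposition into irreducible components, I choose $L_F\in\Pic(F)$ which is trivial on $C$ and strictly positive on every curve of $F$ other than $C$ (for instance $\cO_F(\sum_{j\ge 2}p_j)$ with $p_j\in C_j\setminus C$ a smooth point of $F$ when $F$ is a curve, and an analogous choice that is ample off $C$ in general); then $L_F$ is nef on $F$. Exactly as in the last paragraph of the proof of Proposition~\ref{prop:Pic-flop}, spreading out the local \'etale $R$-algebra carrying a lift of $L_F$ produces an \'etale morphism $(x'\in X')\to(x\in X)$ and a line bundle $L'$ on $Y'=Y\times_X X'$ with $L'|_{F'}\cong L_F$ under $F'\cong F$, and $L'$ is $f'$-nef since every curve contracted by $f'$ lies in $F'$.

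It then remains to prove that $L'$ is semiample over $X'$; granting this, $g'\colon Y'\to Z':=\Proj_{X'}\bigoplus_{m\ge 0}f'_*(L')^{\otimes m}$ is the desired morphism (with $Z'$ normal as $Y'$ is), and it contracts $C'$ and no other curve in $f'^{-1}(x')$ because $L'|_{C'}$ is trivial while $L'$ is positive on the remaining components of $F'$. Here I invoke $R^1f_*\cO_Y=0$: by flat base change $R^1f'_*\cO_{Y'}=0$, so by the theorem on formal functions (with a cohomology-and-base-change argument) the groups $H^1(Y'_n,\cO_{Y'_n})$ for the infinitesimal neighborhoods $Y'_n=Y'\times_{X'}\Spec\cO_{X',x'}/\fm^n$ vanish, and in particular $H^1(C,\cO_C)=0$ — so $L'|_{C'}$ is genuinely trivial, not merely numerically so, and $(L')^m|_{F'}$ is globally generated on $F'$ for $m\gg 0$. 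Using the short exact sequences relating $Y'_{n+1}$, $Y'_n$ and the coherent sheaf $\fm^n\cO_{Y'}/\fm^{n+1}\cO_{Y'}$, and combining the $f'$-nefness of $L'$ and its positivity off $C'$ with this vanishing, I would lift a section of $(L')^m|_{F'}$ nonvanishing at a prescribed point of $f'^{-1}(x')$ to a section of $(L')^m$ over an affine neighborhood of $x'$ in $X'$; hence $(L')^m$ is $f'$-globally generated near $x'$ for $m\gg 0$, and trivially so away from $x'$. Thus $L'$ is $f'$-semiample and $g'$ has the required property.

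The step I expect to be the main obstacle is the lifting in the last paragraph: since $L'$ restricts to the \emph{trivial} (rather than ample) line bundle along $C'$, the obstruction classes to extending sections across the infinitesimal thickenings lie in cohomology groups supported on $F'$ that positivity of $L'$ alone cannot annihilate, and one genuinely needs the vanishing produced by $R^1f_*\cO_Y=0$ — i.e.\ the fact that $f^{-1}(x)$ behaves like the exceptional fiber of a rational singularity, which also forces $C$ to be a rational curve — to control the ``$C'$-direction''. Some additional care in choosing $L_F$ and verifying global generation of $L_F^m$ on $F$ is needed when $f^{-1}(x)$ has components of dimension larger than one; once the relevant $H^1$-vanishing is secured, the remainder is the same bookkeeping as in the proof of Proposition~\ref{prop:Pic-flop}.
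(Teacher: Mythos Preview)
Your approach has a genuine gap, and the paper takes a quite different route that sidesteps it.

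\textbf{The gap.} Two issues arise. First, the surjectivity of $\Pic(F_1)\to\Pic(F_2)$ in Proposition~\ref{prop:Pic-flop} used Grothendieck vanishing for a sheaf supported on $f^{-1}(x)$, which only gives $H^2=0$ when $\dim f^{-1}(x)\le 1$; here $f^{-1}(x)$ is allowed to have higher-dimensional components, so the Artin approximation step already fails as written. Second, and more seriously, even granting the existence of $L'$, your semiampleness argument does not go through. To lift a section of $(L')^m|_{F'}$ to the thickening $Y'_{n+1}$ one must kill classes in $H^1\bigl(F',(L')^m\otimes(\fm^n\cO_{Y'}/\fm^{n+1}\cO_{Y'})\bigr)$. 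The hypothesis $R^1f_*\cO_Y=0$ controls only $H^1$ of the structure sheaf (and via formal functions only the inverse limit, not individual levels), while the obstruction groups involve twists by $(L')^m$ which is merely trivial along $C'$. Nefness of $L'$ and positivity off $C'$ give no vanishing here, and there is no Kodaira--type theorem available without singularity assumptions on $Y'$. So the lifting step is not justified.

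\textbf{What the paper does instead.} The paper abandons the Picard--Artin approach entirely. It takes a general very ample divisor $H\subset Y$, Stein-factorises $f|_H:H\to W\to X$, and uses an \'etale base change $(x'\in X')\to(x\in X)$ (produced by lifting idempotents of the finite $\cO_{X,x}$-algebra underlying $W$) so that connected components of $H'=H\times_X X'$ biject with connected components of $H'\cap f'^{-1}(x')$. Writing $H'=H_1'+H_2'$ with $H_1'$ the components meeting $C'$, the single use of $R^1f_*\cO_Y=0$ is the vanishing $R^1f'_*\cO_{Y'}=0$, which from the short exact sequence $0\to\cO_{Y'}\to\cO_{Y'}(H_2')\to\cO_{H_2'}(H_2'|_{H_2'})\to 0$ makes $H_2'$ relatively basepoint free over $X'$ (since $H'|_{H_2'}=H_2'|_{H_2'}$ is already very ample). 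The ample model of $H_2'$ gives $g'$. No infinitesimal lifting, no basepoint free theorem, and no klt hypothesis are needed; the \'etale pass is used only to separate the \emph{divisor} $H'$ into pieces, not to extend line bundles.
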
    

\begin{proof}
Let $H\subset Y$ be a general very ample divisor. By Bertini's theorem, we may assume that $H$ is normal. Denote by $H\to W\to X$ the Stein factorization of $f|_H: H\to X$ where $W=\Spec_{X} (f|_{H})_*\cO_H$. Then we know that $\phi:H\to W$ has connected fibers and  $\psi: W\to X$ is finite. Let $(R,\fm):= (\cO_{X,x}, \fm_{X,x})$ and $S:= \psi_* \cO_W \otimes_{\cO_X} R$. Applying \cite[\href{https://stacks.math.columbia.edu/tag/09XF}{Tag 09XF} and \href{https://stacks.math.columbia.edu/tag/07M4}{Tag 07M4}]{stacksproject} to the finite ring map $R\to S$, we know that there exists an \'etale local $R$-algebra $R'$ with $S':= S\otimes_R R'$ such that the map $S'\to S'/\fm S'$ induces a bijection on idempotents. Since $R=\cO_{X,x}$ is essentially of finite type over $\bk$, there exists an \'etale  morphism $(x'\in X')\to (x\in X)$ between singularities with $R' \cong \cO_{X', x'}$ such that the connected components of $H':= H\times_X X'$ has a one-to-one correspondence with the connected components of $H'\cap f'^{-1}(x')$.

Let $H_1'$ be the sum of the connected components of $H'$ that meet $C'$, and $H_2':=H'-H_1'$. We claim that $H_2'$ is relatively basepoint free over $X'$, i.e.\ $f'^*f'_*\cO_{Y'}(H_2') \to \cO_{Y'}(H_2')$ is surjective. Since $H'|_{H_2'}=H_2'|_{H_2'}$ and $H'$ is relatively very ample over $X'$, it suffices to show that $f'_* \cO_{Y'}(H_2') \to (f'|_{H_2'})_* \cO_{H_2'}(H_2'|_{H_2'})$ is surjective. This follows from the long exact sequence of cohomology for the short exact sequence
\[
0\to \cO_{Y'} \to \cO_{Y'}(H_2')\to \cO_{H_2'}(H_2'|_{H_2'}) \to 0
\]
and $R^1f'_*\cO_{Y'} = 0$ (which follows from cohomology and base change and the assumption that $R^1 f_* \cO_Y = 0$). 

Finally, by the claim we have a relative ample model $g':Y'\to Z'$ of $H_2'$ over $X'$ which only contracts $C'$ in $f'^{-1}(x')$. 
%It suffices to show that $g'$ is birational. This is automatic when $f$ is birational. If $f$ is not birational, then by the upper semicontinuity of dimension of fibers we know that $f$ has relative dimension $1$. Since a general fiber of $f$ is irreducible, we know that 
%Then $H_2'$ induces the contraction $g'$ by \cite[Proposition 12.1.4]{KM92}.
\end{proof}

\section{Proofs}

In this section, we prove Theorems \ref{thm:nv-can} and \ref{thm:nv-3fold}. Since the local volume is invariant under (algebraically closed) field extensions by \cite[Proposition 14]{BL18a} and \cite[Corollary 1.2]{XZ20}, we may assume  $\bk = \bC$ throughout this section. 

\subsection{Hypersurface singularities}

We start with an estimate on the local volumes for general hypersurface singularities by monomial valuations.

\begin{prop}[{cf.\ \cite[Example 5.3]{Li18}}]\label{prop:hyp-weighted} Let $n\geq 2$ be an integer.
Let $x\in X$ be an $n$-dimensional klt hypersurface singularity where $\widehat{\cO_{X,x}}\cong \bk\llbracket  x_1,\cdots, x_{n+1}\rrbracket/ (f)  $. For  $\bfw = (w_1,\cdots, w_{n+1})\in \bR_{>0}^{n+1}$, denote by $v_{\bfw}$ the monomial valuation with weight $\bfw$ on $\bk\llbracket  x_1,\cdots, x_{n+1}\rrbracket$. Then for any $\bfw\in \bR_{>0}^{n+1}$ we have 
\begin{equation}\label{eq:hyp-weighted}
\hvol(x, X) \leq \left(\sum_{i=1}^{n+1} w_i - v_{\bfw}(f)\right)^n \cdot \frac{ v_{\bfw}(f)}{\prod_{i=1}^{n+1}w_i}. 
\end{equation}
\end{prop}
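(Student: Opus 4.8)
The plan is to bound $\hvol(x,X)$ from above by evaluating the normalized volume functional at a single divisorial valuation over $x\in X$ obtained from the $\bfw$-weighted blow-up of the ambient space $\bA^{n+1}$, in the spirit of \cite[Example 5.3]{Li18}.

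Both sides of \eqref{eq:hyp-weighted} are invariant under the rescaling $\bfw\mapsto t\bfw$, and the right-hand side is continuous (piecewise linear) in $\bfw\in\bR_{>0}^{n+1}$, so I may assume $\bfw\in\bZ_{>0}^{n+1}$. Let $\pi\colon Z\to\bA^{n+1}$ be the $\bfw$-weighted blow-up, $E$ its exceptional divisor, and $\tX\subset Z$ the strict transform of $X$; then $v_{\bfw}=\ord_E$, $A_{\bA^{n+1}}(E)=\sum_i w_i$, $\ord_E(f)=v_{\bfw}(f)=:d$, and $\pi^*X=\tX+dE$. Let $\nu\colon X'\to\tX$ be the normalization and $f'\colon X'\to X$ the induced morphism, which is projective birational and an isomorphism over $X\setminus\{x\}$. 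I fix a prime component $E_0'$ of $\Exc(f')$, let $\mu_0\in\bZ_{>0}$ be its coefficient in $\nu^*(E|_{\tX})$, and test the valuation $v:=\ord_{E_0'}\in\Val_{X,x}^\circ$.

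The estimate on the log discrepancy: restricting $K_Z+\tX=\pi^*(K_{\bA^{n+1}}+X)+(\sum_i w_i-1-d)E$ to $X'$ via adjunction --- using $\mathrm{Diff}_X(0)=0$ since $X$ is a normal Cartier divisor in a smooth variety, and that $(Z,\tX)$ is plt near $\tX$ (because $(\bA^{n+1},X)$ is plt near $X$ by inversion of adjunction, $X$ being klt, and $\sum_i w_i-1-d\ge 0$) --- gives
\[
A_X(v)=1+\Big(\textstyle\sum_i w_i-1-d\Big)\mu_0-\delta_0,\qquad 0\le\delta_0<1,
\]
where $\delta_0$ is the coefficient of $E_0'$ in the resulting different. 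The estimate on the volume: since the function $(f')^*\overline{x^\alpha}$ vanishes along $E_0'$ to order at least $\mu_0\langle\alpha,\bfw\rangle$, the valuation ideal $\{g: v(g)\ge k\}$ contains the image in $\cO_{X,x}$ of the monomial ideal $(x^\alpha:\langle\alpha,\bfw\rangle\ge k/\mu_0)$, whose colength is asymptotic to $\tfrac{d\,k^n}{n!\,\mu_0^n\prod_i w_i}$; this last point follows from the identity $\gr_{v_{\bfw}}\cO_{X,x}\cong\bk[x_1,\dots,x_{n+1}]/(\mathrm{in}_{\bfw}f)$ (valid because $v_{\bfw}$ is a valuation, so $\mathrm{in}_{\bfw}(gf)=\mathrm{in}_{\bfw}(g)\,\mathrm{in}_{\bfw}(f)$), the short exact sequence given by multiplication by $\mathrm{in}_{\bfw}f$, and a lattice-point count for the simplex $\{\alpha\ge 0:\langle\alpha,\bfw\rangle<k\}$. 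Hence $\vol_{X,x}(v)\le \tfrac{d}{\mu_0^n\prod_i w_i}$, and
\[
\hvol(x,X)\le A_X(v)^n\cdot\vol_{X,x}(v)\le\Big(\tfrac{1-\delta_0}{\mu_0}+\textstyle\sum_i w_i-1-d\Big)^n\frac{d}{\prod_i w_i}\le\Big(\textstyle\sum_i w_i-d\Big)^n\frac{d}{\prod_i w_i},
\]
the final inequality because $\delta_0\ge 0$ and $\mu_0\ge 1$ force $\tfrac{1-\delta_0}{\mu_0}\le 1$.

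The main obstacle is exactly the presence of the correction terms $\mu_0\ge 1$ and $\delta_0\ge 0$: when $\mathrm{in}_{\bfw}(f)$ is reducible or non-reduced, $v_{\bfw}$ does not restrict to a genuine valuation on $X$, the exceptional divisor of the weighted blow-up of $X$ is neither reduced nor irreducible in general, and $X'$ need not be $Z$-Cartier along $E_0'$; so one must carry out the adjunction bookkeeping carefully enough to see that both corrections have the right sign and therefore only help. A technically different but essentially equivalent route avoids valuations on $X$ altogether: work with the graded sequence $\fa_m:=(x^\alpha:\langle\alpha,\bfw\rangle\ge m)\cO_{X,x}$, use the identity $\hvol(x,X)=\inf_{\fb_\bullet}\lct(X;\fb_\bullet)^n\,\mult(\fb_\bullet)$ over graded sequences of $\fm$-primary ideals (following \cite{Li18}), compute $\mult(\fa_\bullet)=d/\prod_i w_i$ as above, and bound $\lct(X;\fa_\bullet)\le\sum_i w_i-d$ by inversion of adjunction for the Cartier divisor $X\subset\bA^{n+1}$, testing $(\bA^{n+1},X+\lambda\cdot\fa_m)$ against $E$.
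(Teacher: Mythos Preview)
Your approach is essentially the paper's: take the $\bfw$-weighted blow-up of the ambient space, pass to the normalization $X'$ of the strict transform, pick a component $E_0'$ of the exceptional locus with multiplicity $\mu_0$ in $\nu^*(E|_{\tX})$, bound $A_X(\ord_{E_0'})$ by adjunction and $\vol_{X,x}(\ord_{E_0'})$ by an ideal containment, then combine. The only technical differences are that the paper further perturbs $\bfw$ to have pairwise \emph{coprime} integer entries (so that $Z$ has isolated singularities and $E$ is Cartier in codimension one), and then reads off the volume bound $\vol\le \mu_0^{-n}d/\prod_i w_i$ from the intersection number $-(-E|_{X'})^n$ rather than from your colength/lattice-point count; both routes give the same estimate.

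One small correction: your justification that $(Z,\tX)$ is plt near $\tX$ has the sign backwards --- writing $K_Z+\tX=\pi^*(K_{\bA^{n+1}}+X)+aE$ with $a=\sum_i w_i-1-d\ge 0$, discrepancies of exceptional divisors meeting $E$ are \emph{lower} for $(Z,\tX)$ than for $(\bA^{n+1},X)$, not higher, so plt does not transfer this way. Fortunately you never use $\delta_0<1$; the only input is $\delta_0\ge 0$, and that is simply the effectivity of the different (\cite[Proposition~4.5]{Kol13}), which needs no assumption on the singularities of the pair. With that adjustment your argument is complete, and your alternative route via $\lct(X;\fa_\bullet)$ and $\mult(\fa_\bullet)$ also works and is indeed equivalent.
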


\begin{proof}
For simplicity, denote by $\wsum:=\sum_{i=1}^{n+1} w_i$ and $\wprod:=\prod_{i=1}^{n+1} w_i$.
It is clear that the right hand side of \eqref{eq:hyp-weighted} is a continuous function in $\bfw$ that is invariant under rescaling. Thus we may assume that $\bfw\in \bZ_{>0}^{n+1}$ and each pair $w_i,w_j$ with $i\neq j$ are coprime. We embed $X$ into a smooth variety $W$ of dimension $n+1$ such that $(x_1,\cdots, x_{n+1})$ is a local analytic coordinates. 
Let $\mu_W:\hW\to W$ be the $\bfw$-weighted blow-up extracting a unique prime divisor $F$. Then we know that $F\cong \bP(\bfw)$ and $-(-F)^{n+1} = \wprod^{-1}$. Moreover, from the assumption of pairwise coprimeness of $w_i$ we know that $\hW_{\sing}$ is finite. Thus $F$ is Cartier away from finitely many points. Let $Y$ be the strict transform of $X$ in $\hW$. Then we have $\mu= \mu_W|_Y: Y\to X$ a projective birational morphism. Denote by $Y'$ the normalization of $Y$ and the composition $\mu': Y' \to X$.  Then we have 
\[
K_{\hW} = \mu_W^* K_W + (\wsum - 1) F.  
\]
Clearly, we have $Y = \mu_W^* X - v_{\bfw}(f) F$. Denote by $F_{Y'}:=F|_{Y'}$. Since $F$ is Cartier away from a finite set, we know that $F_{Y'}$ is an effective $\bQ$-Cartier Weil divisor fully supported on the exceptional locus of $\mu'$.
By adjunction \cite[Proposition 4.5]{Kol13} there exists an effective $\bQ$-divisor $\Delta'$ on $Y'$ such that 
\[
K_{Y'} + \Delta' = (K_{\hW} + Y)|_{Y'} =  (\mu_W^* K_W + (\wsum - 1) F + Y)|_{Y'} = \mu'^* K_X + (\wsum - v_{\bfw}(f) - 1) F_{Y'}.
\]
Let $E$ be an irreducible component of $F_{Y'}$ with coefficient $b \geq 1$ as $F_{Y'}$ is an effective $\bZ$-divisor. Then we have 
\[
A_X(\ord_E) = 1+ (\wsum - v_{\bfw}(f) - 1) \ord_E(F_{Y'}) - \ord_E(\Delta') \leq  (\wsum - v_{\bfw}(f)) b.
\]
Moreover, we have  $\fa_{m}(\ord_E) \supset \mu'_* \cO_Y( - \frac{m}{b} F_{Y'})$ which implies that 
\[
\vol_{X,x}(\ord_E) \leq  -(-b^{-1} F_Y')^n = b^{-n}(- (-F)^{n}\cdot Y) = b^{-n}v_{\bfw}(f) (-(-F)^{n+1}) = b^{-n} v_{\bfw}(f) \wprod^{-1}. 
\]
As a result, we have
\[
\hvol(x, X) \leq A_X(\ord_E)^n \cdot \vol_{X,x}(\ord_E)  \leq \left(\wsum - v_{\bfw}(f)\right)^n \cdot \tfrac{ v_{\bfw}(f)}{\wprod}.
\]
The proof is finished.
\end{proof}

Next, we apply the above result to  hypersurface threefold singularities.

\begin{prop}\label{prop:nv-hyp}
Let $x\in X$ be a non-smooth hypersurface threefold singularity. If $\hvol(x,X) > 8$ then $x\in X$ is of type $cA_1$ or $cA_2$, in particular, $\crex(x,X) = 0$. Conversely, any $cA_1$ or $cA_2$ singularity satisfies $\hvol(x,X) \geq 9$.
\end{prop}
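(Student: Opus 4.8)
The plan is to prove the two implications by complementary methods: monomial valuations on the ambient $\bA^4$ for the upper bound, and $\bG_m$-degenerations onto cyclic quotient singularities together with lower semicontinuity of local volumes for the lower bound. For the first assertion, write $\widehat{\cO_{X,x}}\cong\bk\llbracket x_1,\dots,x_4\rrbracket/(f)$ and set $m:=\ord_0 f$, so $m\geq 2$ since $x\in X$ is singular. Applying Proposition~\ref{prop:hyp-weighted} with $\bfw=(1,1,1,1)$ gives $\hvol(x,X)\leq (4-m)^3 m$; since $\hvol(x,X)>0$ this forces $m\leq 3$, and when $m=3$ it gives $\hvol(x,X)\leq 3$. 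Hence once $\hvol(x,X)>8$ we have $m=2$, and I would then split according to the rank $r$ of the quadratic part of $f$. If $r\geq 3$, a general hyperplane section of $X$ has nondegenerate quadratic part, hence is an $A_1$ surface singularity, and $x\in X$ is $cA_1$. If $r\leq 2$, the splitting lemma puts $f$ in the form $x_1x_2+g(x_3,x_4)$ (when $r=2$) or $x_1^2+g(x_2,x_3,x_4)$ (when $r=1$), with $d:=\ord_0 g\geq 3$.

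When $r=2$, a standard computation shows that a general hyperplane section of $X$ is analytically an $A_{d-1}$ surface singularity, so $x\in X$ is $cA_2$ exactly when $d=3$; for $d\geq 4$ the weight $\bfw=(\lceil d/2\rceil,\lfloor d/2\rfloor,1,1)$ has $v_\bfw(f)=d$ and Proposition~\ref{prop:hyp-weighted} gives $\hvol(x,X)\leq 8d/(\lceil d/2\rceil\lfloor d/2\rfloor)\leq 8$. When $r=1$, the weight $(\lceil d/2\rceil,1,1,1)$ together with $\hvol(x,X)>0$ already forces $d\leq 5$, and then the weights $(3,2,2,2)$, $(2,1,1,1)$, $(3,1,1,1)$ give $\hvol(x,X)\leq\tfrac{27}{4}$ for $d=3$ and strictly smaller bounds for $d=4,5$. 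Thus $\hvol(x,X)>8$ forces $x\in X$ to be $cA_1$ or $cA_2$; a general hyperplane section is then du Val, so $x\in X$ is a cDV hypersurface singularity and $\crex(x,X)=0$ by Theorem~\ref{thm:crex-cDV}(1).

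For the converse, monomial valuations only give upper bounds, so I would degenerate $x\in X$ by a suitable $\bG_m$-action and apply lower semicontinuity of local volumes \cite{BL18a}. If $x\in X$ is $cA_1$ then it is analytically an $A_k$-singularity for some $1\leq k\leq\infty$; the $A_\infty$-singularity is $(A_1\text{ surface})\times\bA^1$, a quotient singularity of order $2$, and the $\bG_m$-action with weights $(1,1,1,N)$ for $N\gg 0$ degenerates $\{x_1x_2+x_3^2+x_4^{k+1}=0\}$ onto it, so by Example~\ref{expl:quotient} we get $\hvol(x,X)\geq\tfrac{27}{2}>9$. If $x\in X$ is $cA_2$, then by the first part $f\cong x_1x_2+g(x_3,x_4)$ with $\ord_0 g=3$; after a general linear change $x_4\mapsto x_4+\alpha x_3$ I may assume the coefficient of $x_3^3$ in $g$ is nonzero, and then the $\bG_m$-action with weights $(3,3,2,N)$ for $N\gg 0$ degenerates $X$ onto $X_0=\{x_1x_2+x_3^3=0\}\cong(A_2\text{ surface})\times\bA^1$, a quotient singularity of order $3$, so by Example~\ref{expl:quotient} we get $\hvol(x,X)\geq\hvol(x_0,X_0)=9$.

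I expect the main obstacle to be this last step: producing, for an arbitrary $cA_2$ singularity, a single $\bG_m$-degeneration onto $(A_2\text{ surface})\times\bA^1$. The preliminary coordinate change forcing $x_3^3$ into the cubic leading form of $g$ is exactly what makes the degeneration land on a normal---indeed klt---central fiber, so that lower semicontinuity of local volumes applies; without it the naive degeneration can acquire a non-normal special fiber. The first assertion, by contrast, is a finite though somewhat lengthy case analysis with explicit monomial weights.
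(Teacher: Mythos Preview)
Your proof is correct and follows essentially the same strategy as the paper's: monomial weights via Proposition~\ref{prop:hyp-weighted} for the forward direction, and lower semicontinuity of local volumes together with the quotient-singularity computation for transversal $A_1$/$A_2$ (Example~\ref{expl:quotient}) for the converse. Two minor remarks: in the $r=1$ case the paper uses the single weight $\bfw=(3,2,2,2)$, which already gives $\hvol(x,X)\le\tfrac{27}{4}$ for every $d\ge 3$ (since $v_\bfw(g)\ge 2d\ge 6$), so your detour through bounding $d\le 5$ and separate weights for $d=3,4,5$ is unnecessary; and for the converse the paper simply asserts the degeneration to a transversal $A_1$/$A_2$ singularity without writing down the $\bG_m$-action, whereas your explicit construction (including the preparatory coordinate change forcing $x_3^3$ into the cubic part) makes the argument more self-contained.
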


\begin{proof}
Firstly, if $x\in X$ has multiplicity at least $3$, then $\hvol(x,X) \leq 3$ by applying Proposition \ref{prop:hyp-weighted} to $\bfw = (1,1,1,1)$. If $x\in X$ has multiplicity $2$ but is not of $cA$-type, then we know that locally analytically $X = V( x_1^2 + f(x_2, x_3, x_4))$ with $\mult_0 f \geq 3$. %By lower semicontinuity we may assume that the degree $3$ part $f_3$ defines an ordinary singularity. 
Then by setting $\bfw = (3,2,2,2)$ in Proposition \ref{prop:hyp-weighted} we get 
\[
\hvol(x,X) \leq 3^3 \cdot \frac{6}{3\cdot 2\cdot 2\cdot 2} = \frac{27}{4}< 9.
\]
This shows that $x\in X$ has to be of $cA$-type. In suitable local analytic coordinates we write $X = V(x_1x_2 + g(x_3, x_4))$. If $x\in X$ is of type $cA_{\geq 3}$, then $\mult_0 g\geq 4$. %Similarly by perturbation we may assume that $g_4$ defines an ordinary singularity. 
Then by setting $\bfw = (2,2,1,1)$ in Proposition \ref{prop:hyp-weighted} we get %Let $w$ be the valuation of weight $(2,2,1,1)$. Then 
\[
\hvol(x,X) \leq 2^3 \cdot \frac{4}{2\cdot 2\cdot 1\cdot 1} = 8.
\]

For the converse, one proof follows from the  lower semicontinuity of local volumes \cite{BL18a} and the fact that transversal $A_1$ or $A_2$-singularities are quotient singularities hence have local volume $\frac{27}{2}$ or $9$ (see Example \ref{expl:quotient}). Another proof follows from the adjunction type estimate \cite[Theorem 1.7]{LZ18} by taking general hyperplane sections.
\end{proof}

\begin{prop}\label{prop:nv-cA2}
Suppose $x\in X$ is a hypersurface threefold singularity of type $cA_2$. Then $\hvol(x,X) \leq \frac{32}{3}$.
\end{prop}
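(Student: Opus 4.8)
The plan is to put $x\in X$ into an analytic normal form and then apply Proposition~\ref{prop:hyp-weighted} with a well-chosen monomial weight, exactly in the spirit of the proof of Proposition~\ref{prop:nv-hyp} but with a sharper choice of $\bfw$.

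First I would fix coordinates. Since $x\in X$ is compound du Val it has multiplicity $2$, so, as in the proof of Proposition~\ref{prop:nv-hyp}, there are local analytic coordinates with $\widehat{\cO_{X,x}}\cong \bk\llbracket x_1,x_2,x_3,x_4\rrbracket/(f)$ and $f=x_1x_2+g(x_3,x_4)$. Being of type $cA_2$, the point $x\in X$ is neither of type $cA_1$ (equivalently of type $A_k$ or $A_\infty$, which forces $\mult_0 g\leq 2$) nor of type $cA_{\geq 3}$ (which forces $\mult_0 g\geq 4$ by the proof of Proposition~\ref{prop:nv-hyp}); hence $\mult_0 g=3$.

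Next I would apply Proposition~\ref{prop:hyp-weighted} with $n=3$ and the monomial weight $\bfw=(3,3,2,2)$ on $(x_1,x_2,x_3,x_4)$. Then $v_{\bfw}(x_1x_2)=6$ and $v_{\bfw}(g)=2\mult_0 g=6$, so $v_{\bfw}(f)=6$; with $\sum_{i=1}^{4}w_i=10$ and $\prod_{i=1}^{4}w_i=36$, inequality~\eqref{eq:hyp-weighted} gives
\[
\hvol(x,X)\le (10-6)^3\cdot\frac{6}{36}=\frac{32}{3}.
\]

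I do not expect any real obstacle: once the normal form is in place the bound is an immediate substitution into Proposition~\ref{prop:hyp-weighted}, and the only mild point is pinning down $\mult_0 g=3$. It is worth remarking why the constant is $\tfrac{32}{3}$ rather than something smaller: among the weights $\bfw=(a,a,b,b)$ --- which are exactly the ones for which the right-hand side of~\eqref{eq:hyp-weighted} depends on $g$ only through $\mult_0 g$, via $v_{\bfw}(f)=\min(2a,3b)$ --- the choice $(3,3,2,2)$ (up to rescaling $(\tfrac32,\tfrac32,1,1)$) minimizes the resulting bound and yields precisely $\tfrac{32}{3}$. Thus $\tfrac{32}{3}$ is the best estimate this monomial-valuation method gives uniformly over $cA_2$ points, and it is the value needed in the later reduction steps for general Gorenstein canonical threefold singularities.
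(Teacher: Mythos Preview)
Your proof is correct and essentially identical to the paper's: both put $X$ in the normal form $x_1x_2+g(x_3,x_4)$ with $\mult_0 g=3$ and then apply Proposition~\ref{prop:hyp-weighted} with $\bfw=(3,3,2,2)$ to obtain $(10-6)^3\cdot\tfrac{6}{36}=\tfrac{32}{3}$. Your additional remark on the optimality of this weight is extra commentary not present in the paper but does not affect the argument.
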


\begin{proof}
    We write $X= V(x_1 x_2 + f(x_3, x_4))$ in suitable local analytic coordinates such that $\mult_0 f = 3$. Then by setting $\bfw = (3,3,2,2)$ in Proposition \ref{prop:hyp-weighted} we get
    \[
    \hvol(x,X) \leq 4^3 \cdot \frac{6}{3\cdot 3\cdot 2 \cdot 2} = \frac{32}{3}.
    \]
\end{proof}

\begin{prop}\label{prop:non-iso-hyp}
Suppose $x\in X$ is a non-isolated hypersurface threefold singularity such that $\hvol(x, X) > 9$. Then $x\in X$ is either a transversal $A_1$-singularity or a $D_\infty$-singularity.
\end{prop}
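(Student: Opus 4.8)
The plan is to reduce to the compound du Val types $cA_1$ and $cA_2$ via Proposition~\ref{prop:nv-hyp}, classify the non-isolated singularities of these types through their local analytic equations, and then eliminate the unwanted cases using the monomial-valuation bound of Proposition~\ref{prop:hyp-weighted}. Since $\hvol(x,X)>9>8$, Proposition~\ref{prop:nv-hyp} shows that $x\in X$ is of type $cA_1$ or $cA_2$.

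In the $cA_1$ case, $x\in X$ is analytically isomorphic to an $A_k$-singularity ($k\geq 1$) or an $A_\infty$-singularity, since (as noted above) these are exactly the $cA_1$ threefolds. An $A_k$-singularity with $k$ finite is isolated, while $A_\infty=V(x_1x_2+x_3^2)$ is non-isolated and coincides with a transversal $A_1$-singularity; this is the first alternative. In the $cA_2$ case, the quadratic part of a defining equation of $X$ has rank $2$, so by the splitting lemma we may write, in suitable local analytic coordinates, $\widehat{\cO_{X,x}}\cong \bk\llbracket x_1,x_2,x_3,x_4\rrbracket/(x_1x_2+g(x_3,x_4))$ with $\mult_0 g=3$. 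Inspecting the Jacobian ideal gives $\mathrm{Sing}(X)=\{x_1=x_2=0\}\times V(g,\partial_{x_3}g,\partial_{x_4}g)$, so $x\in X$ is non-isolated exactly when $g$ is non-reduced in $\bk\llbracket x_3,x_4\rrbracket$, i.e.\ divisible by the square of an irreducible element.

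For such a non-reduced $g$ with $\mult_0 g=3$, its factorization in the UFD $\bk\llbracket x_3,x_4\rrbracket$ is either $u f_1^3$ or $u f_1^2 f_2$ with $u$ a unit and $f_1\neq f_2$ irreducible of order $1$. If $g=u f_1^3$, then $g\sim x_3^3$ and $x\in X$ is a transversal $A_2$-singularity, hence $\hvol(x,X)=9$ by Example~\ref{expl:quotient}, contradicting our hypothesis. If $g=u f_1^2 f_2$, I would take $f_1=x_3$ as a coordinate and apply Weierstrass preparation to $f_2$, distinguishing whether the smooth curves $\{f_1=0\}$ and $\{f_2=0\}$ meet transversally or tangentially at the origin. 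In the transversal sub-case, a coordinate change in $x_4$ and absorption of a unit into $x_3$ give $g\sim x_3^2x_4$, so $x\in X$ is a $D_\infty$-singularity, the second alternative. In the tangential sub-case, $g=W x_3^2(x_3-h(x_4))$ with $W$ a unit, $W(0,0)\neq 0$, and $\ord_0 h=k\geq 2$; then every monomial of $g$ has $(3,3,2,1)$-weight at least $6$, with equality only for $x_3^3$ (and for $x_3^2x_4^2$ when $k=2$), so $v_{\bfw}(x_1x_2+g)=6$ for $\bfw=(3,3,2,1)$, and Proposition~\ref{prop:hyp-weighted} gives
\[
\hvol(x,X)\leq (3+3+2+1-6)^3\cdot\frac{6}{3\cdot 3\cdot 2\cdot 1}=9,
\]
again contradicting $\hvol(x,X)>9$. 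Hence $x\in X$ must be a transversal $A_1$-singularity or a $D_\infty$-singularity.

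The step I expect to be most delicate is the analytic bookkeeping in the $cA_2$ case: justifying the rank-$2$ claim and the resulting normal form $x_1x_2+g(x_3,x_4)$ with $\mult_0 g=3$, separating the transversal and tangential sub-cases correctly, and tracking how the various units are absorbed under the coordinate changes --- in particular verifying in the tangential sub-case that every surviving monomial of $g$ has $(3,3,2,1)$-weight at least $6$, so that $v_{\bfw}(x_1x_2+g)$ really equals $6$. The valuation estimate and the $cA_1$ case are routine once the equations are in hand.
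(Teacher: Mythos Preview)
Your proof is correct and follows essentially the same route as the paper's: reduce to $cA_{\leq 2}$ via Proposition~\ref{prop:nv-hyp}, dispatch $cA_1$ directly, put the $cA_2$ equation in the normal form $x_1x_2+g(x_3,x_4)$ with non-reduced $g$ of multiplicity $3$, and eliminate the tangential sub-case using the monomial weight $\bfw=(3,3,2,1)$ in Proposition~\ref{prop:hyp-weighted}. Your write-up is in fact slightly more complete than the paper's, since you treat the cube case $g\sim x_3^3$ (a transversal $A_2$-singularity with $\hvol=9$) explicitly, whereas the paper's phrasing ``denote by $k\in\bZ_{>0}$ the local intersection number of $V(g)$ and $V(x_3)$'' tacitly assumes $V(g)\neq V(x_3)$.
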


\begin{proof}
By Proposition \ref{prop:nv-hyp} we know that $x\in X$ is of type $cA_1$ or $cA_2$. A non-isolated  $cA_1$-singularity is a transversal $A_1$-singularity. Thus it suffices to show that a non-isolated $cA_2$-singularity $x\in X$ with $\hvol(x,X)>9$ is of type $D_\infty$. Write $X = V(x_1 x_2 + f(x_3,x_4))$ in a local analytic coordinate. Since $x\in X$ is non-isolated, we know that $V(f(x_3,x_4))$ is non-reduced. Moreover, since $X$ has type $cA_2$, we have $\mult_0 f = 3$. Hence under suitable local analytic coordinates we have $f(x_3,x_4) = x_3^2 g(x_3,x_4)$ where $V(g)$ defines a smooth plane curve. Denote by $k\in \bZ_{>0}$ the local intersection number of $V(g)$ and $V(x_3)$. Then again under suitable local analytic coordinates we can write $f(x_3,x_4) = x_3^2 (x_3+x_4^k)$. We shall show that $k=1$ which implies that $x\in X$ is a $D_\infty$-singularity. Assume to the contrary that $k\geq 2$. Then by taking $\bfw =(3,3,2,1)$ in Proposition \ref{prop:hyp-weighted} we get $v_{\bfw}(x_1 x_2+f) = 6$ and 
% \[
% A_X(v) = 3+3+2+1 - v(x_1 x_2 + f) = 3, \quad \vol_{X, x}(v) = \frac{1}{3\cdot 3\cdot 2\cdot 1} \cdot v(x_1 x_2 + f)= \frac{1}{3}. 
% \]
% Thus we have
\[
\hvol(x, X) \leq 3^3 \cdot \frac{6}{3\cdot 3 \cdot 2\cdot 1} = 9,
\]
which is a contradiction.
\end{proof}

\subsection{Cases when $\crex=1$}

\begin{prop}\label{prop:crex=1}
Theorem \ref{thm:nv-can} holds if $\crex(x,X) = 1$. 
\end{prop}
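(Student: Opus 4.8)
The plan is to bound $\hvol(x,X)$ from above by the normalized volume of the order valuation of the unique crepant divisor over $x\in X$, and then to pin down the equality case. Since $x\in X$ is Gorenstein canonical and non-hypersurface, Theorem~\ref{thm:crex-cDV}(2) gives that $f\colon Y:=\Bl_x X\to X$ is a crepant birational morphism, an isomorphism over $X\setminus\{x\}$, with $Y$ Gorenstein canonical. As $f$ is crepant and $X$ is canonical, every $f$-exceptional prime divisor has log discrepancy $1$ over $X$, i.e.\ is a crepant exceptional divisor over $x\in X$; since $\crex(x,X)=1$, this forces $f^{-1}(x)=E$ to be a single prime divisor, which is Cartier in $Y$ (cut out by the invertible sheaf $\fm_{X,x}\cdot\cO_Y=\cO_Y(-E)$) and for which $-E$ is $f$-ample. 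By adjunction $K_E=(K_Y+E)|_E=(f^*K_X)|_E+E|_E=E|_E$, so $-K_E=\cO_E(-E)$ is ample; thus $E$ is a Gorenstein del Pezzo surface. (One also gets $\crex(y,Y)=0$ for every $y\in Y$, so $Y$ has only cDV singularities by Theorem~\ref{thm:crex-cDV}(1); this is not needed here but is useful in the $\crex=2$ case.)

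Next I would carry out the volume estimate. Because $-E$ is $f$-ample, for every $m\ge 0$ one has $\fa_m(\ord_E)=f_*\cO_Y(-mE)$, and $R^{>0}f_*\cO_Y(-mE)=0$ by Kawamata--Viehweg vanishing (applied to $(Y,(1-\epsilon)E)$ for $0<\epsilon\ll1$, just as in the proof of Proposition~\ref{prop:Pic-lc}). A standard computation, as in the proof of Proposition~\ref{prop:hyp-weighted}, then gives $\vol_{X,x}(\ord_E)=-(-E)^3=(-K_E)^2$, using $K_E=E|_E$. Since $A_X(E)=1$, this yields
\[
\hvol(x,X)\ \le\ \hvol_{X,x}(\ord_E)\ =\ A_X(E)^3\cdot\vol_{X,x}(\ord_E)\ =\ (-K_E)^2 .
\]

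It then remains to prove $(-K_E)^2\le 9$, with equality only for $E\cong\bP^2$; this is the classification of Gorenstein del Pezzo surfaces. Here one must allow non-normal $E$ --- which can genuinely occur, e.g.\ when a curve of cDV singularities of $Y$ meets $E$ --- and the relevant analysis (see \cite{Rei94}, building on the study of elliptic Gorenstein singularities in \cite{Lau77, Rei76, Rei80}) gives $(-K_E)^2\le 9$, with equality if and only if $E\cong\bP^2$ polarized by $\cO_{\bP^2}(3)$. Hence $\hvol(x,X)\le 9$. In the equality case $E\cong\bP^2$ with $\cO_E(-E)\cong\cO_{\bP^2}(3)$; since $E$ is a smooth Cartier divisor, $Y$ is smooth along $E$, so $f$ contracts a copy of $\bP^2$ with normal bundle $\cO_{\bP^2}(-3)$. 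Then $\ord_E$ is a $\hvol_{X,x}$-minimizer, and unwinding the vanishing above its associated graded algebra $\bigoplus_m\fa_m(\ord_E)/\fa_{m+1}(\ord_E)$ equals $\bigoplus_m H^0(\bP^2,\cO(3m))$, the coordinate ring of the affine cone over $(\bP^2,\cO(3))$, which is $\tfrac13(1,1,1)$. By Theorem~\ref{thm:SDC} this is a degeneration of $x\in X$, and since three-dimensional isolated quotient singularities are rigid, the degeneration is trivial, so $x\in X$ is analytically isomorphic to $\tfrac13(1,1,1)$. Conversely $\tfrac13(1,1,1)$ is Gorenstein canonical, non-hypersurface (its embedding dimension is $10$), has $\crex=1$, and satisfies $\hvol=\tfrac{27}{3}=9$ by Example~\ref{expl:quotient}. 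This establishes Theorem~\ref{thm:nv-can} when $\crex(x,X)=1$.

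The main obstacle is the last step: obtaining the sharp bound $(-K_E)^2\le 9$ together with the exact description of its equality locus, the delicate point being the possible non-normality of $E$; this is precisely where the classification of (possibly non-normal) Gorenstein del Pezzo surfaces and the fine structure of elliptic Gorenstein surface singularities come into play. The first two steps --- identifying $\Bl_x X$ via Theorem~\ref{thm:crex-cDV} and computing $\vol_{X,x}(\ord_E)=(-K_E)^2$ via vanishing --- should be routine given the results recalled earlier.
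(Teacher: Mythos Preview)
Your overall skeleton is right, but the crucial claim that ``the classification of (possibly non-normal) Gorenstein del Pezzo surfaces gives $(-K_E)^2\le 9$ with equality only for $\bP^2$'' is false, and this is where essentially all of the work in the paper's proof lies.

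First, you dismiss the fact that $Y$ has only cDV (hence hypersurface) singularities as ``not needed here''. In fact it is essential. Already in the normal case, Brenton's classification allows $E$ to be a projective cone over an elliptic curve of degree $d$, with $(-K_E)^2=d$ arbitrarily large; it is precisely the constraint that every point of $E$ has embedding dimension at most $4$ (because $E\subset Y$ and $Y$ has hypersurface singularities) that forces $d\le 4$. So the cDV property of $Y$ is used even before one reaches the non-normal cases.

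Second, and more seriously, even with that embedding constraint the classification in \cite{Rei94} leaves two non-normal families---projections of $\bF_{m-2;1}$ and of $\bF_{m-4;2}$---in which $m=(-K_E)^2$ is not a priori bounded by $9$. The paper disposes of these by arguments that go well beyond citing the classification: for $\bF_{m-4;2}$ one does \emph{not} bound $(-K_E)^2$ at all, but instead uses the Artin approximation results of Section~\ref{sec:Artin} (Proposition~\ref{prop:Pic-lc}) to factor $Y'\to Z'\to X'$ after an \'etale base change so that $Z'$ acquires a transversal $A_2$-singularity, whence $\hvol(x,X)<9$ by Theorem~\ref{thm:nv-crepant}; for $\bF_{m-2;1}$ one carries out a sheaf-theoretic tangent bundle computation comparing $\det\cE$ with $\omega_Y|_C$ along the non-normal locus $C$, using the specific way $E$ sits inside the threefold $Y$, to deduce $m\le 7$. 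Neither of these is a statement about abstract Gorenstein del Pezzo surfaces; both use the ambient $Y$ in an essential way. Your proposal treats this entire analysis as a black-box consequence of \cite{Rei94}, which it is not.
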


\begin{proof}
%\YL{Reason: extract the only $E$ over $x\in X$, get $Y\to X$, where $Y$ has cDV singularities, and no curve in $Y_{\sing}$ is contained in $E$ as otherwise we have $\crex(x,X) \geq 2$. Then we know $E$ is Cartier along curves which implies that it is Cartier. Thus we basically have it from case 1 (even better, no need to discuss the case where $E$ contains a curve in $Y_{\sing}$). }

%We do induction on $\crex(x,X)$. Suppose $x\in X$ is a Gorenstein non-hypersurface canonical singularity such that $k \geq 3$. 
%Let $\mu: Y\to X$ be the blow-up of the maximal ideal $\fm_x$. Then by \cite[Theorem 5.35]{KM98} we know that $\mu$ is crepant birational and $Y$ has Gorenstein canonical singularities. Denote by $E$ the reduced exceptional divisor of $\mu$. Then by ??? we know that $-E$ is Cartier and $\mu$-ample.\footnote{\YL{actually I'm not sure if this is true. No references found yet stating $E$ is reduced.}} Moreover, we have $-K_E\sim -E|_E$ is Cartier and ample. If $Y$ has non-hypersurface singularity at $y\in E$, then we have $\crex(y,Y)<\crex(x,X)$ and it follows from induction hypothesis. Thus we may assume that $Y$ has hypersurface singularities along $E$. We split into two cases based on irreducibility of $E$. 
%
%\textbf{Case 1.} $E$ is irreducible and hence integral. 
Let $E$ be the unique crepant exceptional divisor over $x\in X$.  Thus by Proposition \ref{prop:extract} we can extract $E$ on a crepant birational model $\mu:Y\to X$ such that $Y$ is also Gorenstein canonical and $E\subset Y$ is $\bQ$-Cartier, $\Supp(E) = \mu^{-1}(x)$, and $-E$ is ample over $X$. Thus for any $y\in E$ we have $\crex(y,Y)<\crex(x,X)$ which implies that $\crex(y,Y)=0$, i.e.\ $Y$ has cDV singularities along $E$ by Theorem \ref{thm:crex-cDV}.
%If $Y$ has a non-hypersurface singularity $y$ in $E$, then we have $\crex(y,Y)<\crex(x,X)$ and we can apply induction. Thus we may assume that $Y$ has hypersurface singularities along $E$. 
Moreover, since there are no crepant exceptional divisor over $Y$ whose center is contained in $E$, we know that $Y_{\sing}\cap E$ has dimension $0$. 

We shall show that $E$ is Cartier in $Y$. If not, say $E$ is not Cartier at some point $y\in Y$ with Cartier index $d>1$.  Let $(\ty\in \tY)\to (y\in Y)$ be an index $1$ cover of $E$ at $y$. By Theorems \ref{thm:finite-deg} and \ref{thm:nv-crepant} we know that 
\[
\tfrac{1}{d}\hvol(\ty, \tY)= \hvol(y,Y)>\hvol(x, X) .
\]
If either $\ty\in \tY$ is singular or $d\geq 3$, then by Theorem \ref{thm:ODP-gap} we have $\hvol(x,X)<9$. If $\ty\in \tY$ is smooth and $d=2$, then $Y$ being Gorenstein implies that $y\in Y$ is a cyclic quotient singularity of type $\frac{1}{2}(1,1,0)$ whose singular curve is denoted by $C$. Thus $\tY\to Y$ is ramified along $C$ which implies that $E$ is not Cartier along $C$, in particular $C\subset E$. This contradicts the fact that $Y_{\sing}\cap E$ is a finite set. Thus we have shown that $E$ is Cartier in $Y$  and  $Y_{\sing}\cap E$ is a finite set.

%\textbf{Case 1.} $E$ is Cartier.

Next, we follow similar arguments to \cite[Section 3.1]{LX19}. By adjunction, we know that $-K_E = -(K_Y+E)|_E= -E|_E$ is ample, and $E$ is Gorenstein by the Cartierness of $E$. Thus $E$ is an integral  Gorenstein del Pezzo surface. 
Thus we can apply the classification from \cite{Rei94}.
Similar to \cite[Proof of Proposition 3.4]{LX19}, we have
\begin{equation}\label{eq:lc-blowup}
\hvol(x,X) \leq \hvol_{X,x}(\ord_E) = (E^3) =  (-K_E)^2 .
\end{equation}

If $E$ is normal, then by \cite{Bre80} we get that either $E$ is a projective cone over an elliptic curve with local embedding dimensions at most $4$ (as $Y$ has hypersurface singularities along $E$) which implies $(-K_E)^2 \leq 4$, or $E$ has du Val singularities which implies $(-K_E)^2 \leq 9$. Both cases imply that $\hvol(x,X) \leq 9$. Moreover, if equality holds, then $(-K_E)^2 = 9$ and hence $E\cong \bP^2$. Thus, $E$ is a Koll\'ar component over $x\in X$. By \cite[Section 2.4]{LX20} it induces an isotrivial degeneration of $x\in X$ to the affine cone over $E\cong \bP^2$ with polarization $-E|_E\cong \cO_{\bP^2}(3)$, which is a cyclic quotient singularity of type $\frac{1}{3}(1,1,1)$. By Schlessinger's rigidity theorem \cite{Sch71} we know that they must be analytically isomorphic, which implies that $x\in X$ is a cyclic quotient singularity of type $\frac{1}{3}(1,1,1)$. 

In the rest of the proof, we may assume that $E$ is non-normal. Then the only two cases with possibly $(-K_E)^2\geq 9$ and the embedding dimension of every point $y\in E$ at most $4$ are projections of $\bF_{m-2;1}$ and $\bF_{m-4;2}$ where $m = (-K_E)^2$. These are also the main cases from \cite[Section 3.1]{LX19} thus we will generalize arguments there to allow $Y$ having hypersurface singularities. We shall show that $\hvol(x,X)<9$ in both cases, thus finishing the proof.

\smallskip

\textbf{Case 1.} $E$ is a projection of $\bF_{m-4;2}$. 

Denote by $A$ the fiber and $B$ the negative section of $\bF_{m-4;2}$. Under the normalization $\bF_{m-4;2}\to E$ we have a double cover $B\to C$ where $C$ is the non-normal locus in $E$. Both $B$ and $C$ are isomorphic to $\bP^1$. Thus we have a morphism $g_E: E\to C$ which is the descent of the composition $\bF_{m-4;2}\to B\to C$ where the first map is the usual $\bP^1$-bundle projection. Thus $L_E:=g_E^*\cO_{C}(1)$ is a nef line bundle $L_E$ on $E$. Clearly, a fiber of $g_E$ is either the union of two intersecting lines or a double line.  

Since $E$ is slc, by inversion of adjunction we know that $\mu: (Y,E)\to X$ is a Cartier lc blow-up. Thus by Proposition \ref{prop:Pic-lc} there exists an \'etale morphism $(x'\in X')\to (x\in X)$ and a line bundle $L'$ on $Y'=Y\times_X X'$ semiample over $X'$ such that $L'|_{E'}\cong L_E$ under the natural identification of $E'=E\times_X X'$ and $E$. By taking the ample model of $L'$ over $X'$, we obtain a projective birational morphism $g' : Y'\to Z'$ over $X'$ that contracts $E'$ to a curve $B_{Z'}$ whose fibers are naturally identified with the fibers of $g_E$ which is connected. Since $Y\to X$ is crepant, so is $Y'\to X'$. Hence both $Y'\to Z'$ and $Z'\to X'$ are crepant. Thus $Z'$ has Gorenstein canonical singularities. For a general point $z\in Z'$ over $x$, the fiber of $Y'\to Z'$ over $z$ has two irreducible components. Moreover, since $Y_{\sing}\cap E$ is finite, we may assume that $Y'$ is smooth in a neighborhood of $g'^{-1}(z)$. Thus we know that $z\in Z'$ has transversal $A_2$-singularities which implies that $\hvol(z, Z')  =  9$. Since by Theorem \ref{thm:nv-crepant} and Proposition \ref{prop:nv-analytic} we have
\[
\hvol(x, X) = \hvol(x', X') < \hvol(z, Z') = 9.
\]

%, and twice of the fiber class gives a nef line bundle on $E$. \YL{One should be able to extend it to an analytic neighborhood, and take ample model to contract the fibers but not the negative curve. As a result, we get a small birational model over $X$ that is singular along a curve, and it has to be of type $A_{\geq 2}$ as there are two crepant exceptional curves over a point.}
\smallskip

\textbf{Case 2.} $E$ is a projection of $\bF_{m-2;1}$.

We follow similar arguments as \cite[Proof of Proposition 3.4]{LX19}. 
Denote by $C$ the non-normal locus of $E$ and $\nu:\oE \to E$ the normalization. Then we have $\nu^{-1}(C) = A\cup B$ where $A$ is a fiber of the $\bP^1$-bundle $\oE \cong \bF_{m-2}$ and $B$ is the negative section. Let $y:=\nu(A\cap B) \in C$ be the only point in $E$ that is neither smooth nor normal crossing. 
Denote by 
$\cE$ the image of $\cT_{\oE}|_A\oplus \cT_{\oE}|_{B} \to \cT_Y|_C$. Then we have two short exact sequences 
\[
0 \to \cT_C \to \cT_{\oE}|_A\oplus \cT_{\oE}|_{B}  \to \cE \to 0,
\]
\[
0 \to \cE \to \cT_Y|_C \to \cG\to 0.
\]
%Since the $\bG_m$-action on $C\cong \bP^1$ is non-trivial, it has two fixed points $y$ and $y'$, where at $y'$ the surface $E$ is normal crossing with two analytic components. 
Let $y_1',\cdots, y_k'$ be all points in $C\setminus \{y\}$ where $Y$ is singular. Then we have $\Supp(\cG)\subset\{y, y_1',\cdots, y_k'\}$ as  $(Y,E)$ is normal crossing along $C^\circ:=C\setminus \{y, y_1',\cdots, y_k'\}$. Next we analyze the relation of $\det \cE$ and $\omega_Y|_C$. Clearly along $C^\circ$ the two line bundles are dual to each other. 

At $y$, under suitable local analytic coordinates $(x_1,\cdots, x_4)$ we know that $E$ is locally  defined by $V(x_3^2 + x_2^3 - x_1x_2x_3, x_4)$ and $C = V(x_2, x_3, x_4)$ by \cite[Proof of Proposition 3.4]{LX19}. Since $E\subset Y$, we can write $Y = V(f)$ at $y$ where $f = (x_3^2 + x_2^3 - x_1x_2x_3) h + x_4 g$ such that $x_4$ does not appear in $h$.
A local generator of $\omega_Y|_C$ at $y$ is given by $ f_{x_4} ^{-1} dx_1\wedge dx_2 \wedge dx_3$ where $f_{x_4} = \frac{\partial{f}}{\partial{x_4}}|_C = g|_C$. Since $C\not\subset Y_{\sing}$ we know that $g|_C\neq 0$. Let $l :=\ord_{y} (g|_C)\in \bZ_{\geq 0}$. Thus a local generator of $\omega_Y|_C$ at $y$ is $x_1^{-l} dx_1\wedge dx_2 \wedge d x_3$.
By \cite[Proof of Proposition 3.4]{LX19}, we have $\cE = \langle \partial_{x_1}, x_1\partial_{x_2}, x_1^2\partial_{x_3} \rangle$ which implies that a local generator of $\det \cE$ at $y$ is $\partial_{x_1}\wedge x_1\partial_{x_2} \wedge x_1^2\partial_{x_3} = x_1^3 \partial_{x_1}\wedge \partial_{x_2} \wedge \partial_{x_3} $. Thus the natural pairing of $\det\cE$ and $\omega_Y|_C$ leads to a duality between $(\det\cE) \otimes \cO_C((3-l) y)$ and $\omega_Y|_C $ at $y$. 

Similarly, at $y_i'$ suppose $E = V(x_2x_3, x_4)$ and $C = V(x_2, x_3, x_4)$. Then $Y = V(f_i')$ where $f_i'=x_2 x_3 h_i' + x_4 g_i'$ such that $x_4$ does not appear in $h_i'$. We also have $g_i'|_C\neq 0$ as otherwise $Y$ is singular along $C$. Moreover, $g_i'$ has to vanish at $y_i'$ as otherwise $Y$ is smooth at $y_i'$, a contradiction. Let $l_i':=\ord_{y_i'} (g_i'|_C)\in \bZ_{> 0}$. Then a local generator of $\omega_Y|_C$ at $y_i'$ is given by $x_1^{-l_i'} dx_1\wedge dx_2 \wedge dx_3$ as $f_{x_4}/x_1^{l_i'}$ is an invertible function at $y_i'$. Then a simple geometric description shows that $\cE = \langle \partial_{x_1}, \partial_{x_2}, \partial_{x_3}\rangle $ at $y_i'$ which implies that $\det \cE $ has a local generator $\partial_{x_1}\wedge \partial_{x_2} \wedge \partial_{x_3} $ at $y_i'$. Similar to the previous discussion at $y$, we have a duality between $(\det\cE) \otimes \cO_C(-l_i' y_i')$ and $\omega_Y|_C$ at $y_i'$. To summarize, we have $(\det\cE) \otimes \cO_C ( (3-l) y - \sum_{i=1}^k l_i' y_i')\cong \omega_Y^{\vee}|_C$ which implies that 
\begin{align*}
 0 & = \deg \omega_Y^{\vee}|_C = \deg (\det\cE) \otimes \cO_C ( (3-l) y - \sum_{i=1}^k l_i' y_i')\\
 & = \deg \cE + 3-l - \sum_{i=1}^k l_i' \\
 &\leq \deg \cE + 3 = -\deg \cT_C +\deg \cT_{\oE}|_A + \deg \cT_{\oE}|_{B} + 3\\
 & = \deg \cT_C + \deg \cN_{A/\oE} + \deg \cN_{B/\oE} + 3\\
 & = 2 - (m-2) + 3  = 7-m.
\end{align*}
Thus we conclude from \eqref{eq:lc-blowup} that $\hvol(x,X) \leq m\leq 7$. 
\end{proof}

\subsection{Cases when $\crex = 2$}

\begin{prop} \label{prop:crex=2}
Theorem \ref{thm:nv-can} holds if $\crex(x,X) = 2$.
\end{prop}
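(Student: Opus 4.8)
The plan is to run the same strategy as in the $\crex=1$ case (Proposition~\ref{prop:crex=1}) on a suitable crepant model, while being careful that contracting a single crepant divisor may produce a transversal $A_1$-singularity, whose local volume $\tfrac{27}{2}$ is too large for Theorem~\ref{thm:nv-crepant}. I would pass to $Y:=\Bl_xX\xrightarrow{\mu}X$, which by Theorem~\ref{thm:crex-cDV}(2) is crepant and Gorenstein canonical; then $E:=\mu^{-1}(x)$ is a Cartier divisor with $-E$ $\mu$-ample, and by adjunction $E$ is a reduced Gorenstein slc surface with $-(K_E+\Delta_E)=-E|_E$ ample, where $\Delta_E$ is the different of $(Y,E)$ along $E$. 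Exactly as in \eqref{eq:lc-blowup}, for any irreducible component $E'$ of $E$ the inclusion $\fa_m(\ord_{E'})\supseteq\mu_*\cO_Y(-mE)$ yields
\[
\hvol(x,X)\le \hvol_{X,x}(\ord_{E'})=\vol_{X,x}(\ord_{E'})\le (E^3)_Y=\big((K_E+\Delta_E)^2\big).
\]
Any crepant divisor over $X$ centered at $x$ other than the components of $E$ is a crepant divisor over $Y$, so since $\crex(x,X)=2$ a bookkeeping of centers on $Y$ leaves exactly three possibilities: (i) $E=E_1+E_2$ is reducible (both crepant divisors are components of $E$, and then $Y$ has only cDV singularities); (ii) $E$ is irreducible and the remaining crepant divisor $F$ over $Y$ has a closed point $y_0\in E$ as its center; or (iii) $E$ is irreducible and $F$ has a curve $Z\subset E$ as its center.

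In case (ii) we have $\crex(y_0,Y)=1$, and $y_0\in Y$ is a Gorenstein canonical singularity that is non-hypersurface by Theorem~\ref{thm:crex-cDV}(1); hence Proposition~\ref{prop:crex=1} applies to $y_0\in Y$ and gives $\hvol(y_0,Y)\le 9$, so $\hvol(x,X)<\hvol(y_0,Y)\le 9$ by Theorem~\ref{thm:nv-crepant}. In cases (i) and (iii) one has $\crex(y,Y)=0$ for every closed point $y\in Y$, so $Y$ is cDV everywhere and every point of $E$ has embedding dimension at most $4$; moreover in case (iii) the generic transversal singularity of $Y$ along $Z$ must be of type $A_1$ (a transversal $A_2$ or worse would contribute a second crepant divisor centered in $Z$, violating $\crex(x,X)=2$), which forces $E$ to be non-normal along $Z$ and $\Delta_E$ to be a nonzero divisor supported on $Z$. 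I would then classify the resulting slc Gorenstein (log) del Pezzo surface $(E,\Delta_E)$ of embedding dimension $\le 4$ using \cite{Rei94}, \cite{Bre80}, Lemma~\ref{lem:can-elliptic} and Proposition~\ref{prop:fund-cycle-2} — the latter describing the minimal resolution of the general hyperplane section $x\in H$, an elliptic Gorenstein surface singularity with two exceptional $\bP^1$'s meeting with intersection number $2$, which governs the structure of $E$.

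When $E$ is normal it is a normal Gorenstein del Pezzo and $\Delta_E=0$ (a transversal $A_1$ of $Y$ along a curve in $E$ would make $E$ non-normal), so by \cite{Bre80} it has du Val singularities or is a cone over an elliptic curve of embedding dimension $\le 4$; thus $(K_E^2)\le 9$ with equality only for $E\cong\bP^2$. But $E\cong\bP^2$ forces $X$ to be the $\frac13(1,1,1)$-cone, hence $\crex(x,X)=1$, a contradiction, so $(K_E^2)\le 8$ and $\hvol(x,X)\le (K_E^2)<9$. When $E$ is non-normal, exactly as in Proposition~\ref{prop:crex=1} the only possibilities with embedding dimension $\le 4$ are the projections of $\bF_{m-2;1}$ and $\bF_{m-4;2}$; I would repeat the argument there — extending the natural nef line bundle on $E$ to an \'etale neighborhood via the Artin-approximation factorization of Proposition~\ref{prop:Pic-lc} and taking its ample model to contract $E$ onto a curve whose generic point is a transversal $A_2$-singularity (local volume $9$), so that $\hvol(x,X)<9$ by Theorem~\ref{thm:nv-crepant} — the new input being that, along the portion of the non-normal locus of $E$ carrying the transversal $A_1$ of $Y$ in case (iii), the contracted singularity is still shown to have local volume $\le 9$. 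Finally, in case (i), $E=E_1+E_2$ is a connected reduced Gorenstein slc del Pezzo surface with two components glued along the conductor (anticanonical on each side) and embedding dimension $\le 4$; classifying such surfaces — built from pairs $(\overline{E_i},\overline{D_i})$ of normal Gorenstein log del Pezzos with reduced boundary of embedding dimension $\le 4$ — shows $(-K_E)^2\le 8<9$, and the displayed estimate concludes.

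The step I expect to be the main obstacle is case (iii): the transversal $A_1$-curve on $E$ both keeps the crude bound at $\tfrac{27}{2}$ and introduces a nonzero different, so one must either carry the extra curve $Z$ and $\Delta_E$ through the non-normal del Pezzo analysis of \cite{Rei94}, or else extract the second crepant divisor $F$ — obtaining $F\to Z$ a conic bundle over $Z$ — and use Proposition~\ref{prop:Pic-flop} (or Proposition~\ref{prop:Pic-lc}) to contract it after an \'etale base change in a way that replaces the transversal $A_1$ by a transversal $A_2$ of local volume exactly $9$. Making this compatible with the non-normal projections $\bF_{m-2;1}$, $\bF_{m-4;2}$ of \cite{Rei94}, and carrying out the reducible-del-Pezzo classification in case (i), are the remaining technical hurdles.
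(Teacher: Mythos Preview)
Your overall plan mirrors the paper's: blow up $\fm_{X,x}$, reduce to $Y$ cDV via Proposition~\ref{prop:crex=1}, and then analyse the exceptional surface using \cite{Rei94}. However, two of your concrete claims are wrong, and each corresponds to a case that the paper isolates and treats with a genuinely new argument.

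\textbf{(A) $E$ need not be Cartier.} Your assertion that ``$E:=\mu^{-1}(x)$ is a Cartier divisor'' conflates the scheme-theoretic fibre (Cartier but possibly non-reduced) with the reduced exceptional divisor. When $\mu$ extracts a single prime divisor $E$ and the second crepant divisor is centred on a curve $Z\subset E$, the reduced $E$ can have Cartier index $2$: the pullback of $\fm_{X,x}$ is $\cO_Y(-2E)$, not $\cO_Y(-E)$. This is exactly the situation of the paper's Proposition~\ref{prop:crex=2-2}. There your whole adjunction/del~Pezzo framework collapses (there is no Gorenstein del Pezzo structure on $E$), and the paper instead computes the fundamental cycle of the elliptic hyperplane section, shows that the hyperplane section of $E$ is a rational normal curve, and concludes that $E$ is a surface of minimal degree; only then can one finish via a Koll\'ar-component estimate or an Artin-approximation contraction. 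None of this is visible from your outline, and your claim ``a transversal $A_1$ of $Y$ along a curve in $E$ would make $E$ non-normal'' is precisely what fails when $E$ is not Cartier.

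\textbf{(B) In case (i) the bound $(-K_E)^2\le 8$ is false.} Your reducible-del-Pezzo classification overshoots: the paper's Proposition~\ref{prop:crex=2-3} finds subcases with $(-K_E)^2=9$ and even $(-K_E)^2=10$ (e.g.\ two components of type $\bF_{3;1}$ glued ``fibre to section''), so the crude estimate $\hvol(x,X)\le(-K_E)^2$ does not close the argument. The paper disposes of these by either contracting a component along its ruling via Proposition~\ref{prop:Pic-lc} to produce a transversal $A_2$-point or a Koll\'ar component on $\bP(1,1,3)$, or by a tangent-sheaf/dualising-sheaf pairing along the double curve that forces the degrees $a_i\le 3$.

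Finally, even in your case (iii) with $E$ Cartier (the paper's Proposition~\ref{prop:crex=2-1}), the $\bF_{m-2;1}$ projection cannot be handled by ``repeating'' the $\crex=1$ argument: now $Y$ is singular along the whole non-normal curve $C$, not at finitely many points, and the paper replaces the tangent-bundle computation by an explicit analysis of the conic bundle obtained by blowing up $C$ (getting $m=10-k$ for $k$ the number of singular fibres), followed by an MMP contraction to a Koll\'ar component on $\bP(1,1,3)$ when $k=0$. Your sketch gestures at this but does not supply the mechanism.
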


The proof of Proposition \ref{prop:crex=2} is quite lengthy and is based on a thorough study of blow-ups of such singularities. We first fix some notation and derive some easy consequences.

Let $\mu:Y \to X$ be the blow-up of $\fm_{X,x}$. Denote by $e$ the Hilbert--Samuel multiplicity of $x\in X$. By Theorem \ref{thm:crex-cDV} we know that $e+1$ is the embedding dimension of $x\in X$, $\mu$ is crepant, and $Y$ has Gorenstein canonical singularities. Thus $e\geq 4$ since $x\in X$ is not a hypersurface singularity.
If $Y$ has a non-cDV singularity at some point $y$ over $x$, we have $0<\crex(y,Y)<\crex(x,X) = 2$ and hence $\crex(y,Y) = 1$. Thus by Theorem \ref{thm:nv-crepant} and Proposition \ref{prop:crex=1} we have 
\[
\hvol(x,X) < \hvol(y, Y) \leq 9.
\]
Thus we may assume that $Y$ has cDV singularities. Since $\crex(x,X)=2$, we know that $\mu$ extracts either one or two prime divisors. 

Below, we split the proof into three parts based on the behavior of $\mu$.

\begin{prop}\label{prop:crex=2-1}
    Theorem \ref{thm:nv-can} holds if $\crex(x,X) = 2$, the blow-up $\mu:Y\to X$ extracts only one prime divisor $E$, and $E$ is Cartier in $Y$.
\end{prop}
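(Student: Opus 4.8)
The plan is to adapt the proof of Proposition~\ref{prop:crex=1}, now exploiting that $E$ is a Gorenstein del Pezzo surface. First I would record the structure. Since $\mu$ is crepant and $E$ is Cartier in $Y$, adjunction gives $\omega_E\cong\cO_E(E|_E)$; as $-E$ is $\mu$-ample, the divisor $L:=-E|_E$ is ample on the projective surface $E$, so $-K_E\cong L$ is ample and $E$ is an integral Gorenstein del Pezzo surface. The asymptotic Riemann--Roch argument behind \eqref{eq:lc-blowup} then gives
\[
\hvol(x,X)\ \le\ \hvol_{X,x}(\ord_E)\ =\ A_X(E)^3\cdot\vol_{X,x}(\ord_E)\ =\ (-K_E)^2\ =:\ m ,
\]
and since $Y$ has cDV (hence hypersurface) singularities and $E\subset Y$ is Cartier, the embedding dimension of $E$ is at most $4$ at every point. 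I would also pin down the second crepant divisor $E_2$ over $x$: since $Y$ has only cDV singularities, Theorem~\ref{thm:crex-cDV} gives $\crex(y,Y)=0$ for every $y\in Y$, so the center of $E_2$ on $Y$ is a curve $C\subset E$; as $\crex(x,X)=2$, this forces $Y$ to be a transversal $A_1$ singularity along $C$ and cDV away from $C$.

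Next I would split into three cases exactly as in the proof of Proposition~\ref{prop:crex=1}. If $E$ is normal, the classification of normal Gorenstein del Pezzo surfaces of embedding dimension at most $4$ gives $m\le 9$ by \cite{Bre80}; and if $m=9$ then $E\cong\bP^2$, so $E$ is a Koll\'ar component over $x\in X$ inducing an isotrivial degeneration of $x\in X$ to the affine cone over $(\bP^2,\cO_{\bP^2}(3))$, i.e.\ to $\frac{1}{3}(1,1,1)$, and Schlessinger's rigidity theorem \cite{Sch71} identifies $x\in X$ with $\frac{1}{3}(1,1,1)$ --- precisely the equality case of Theorem~\ref{thm:nv-can}, so we are done in this case. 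If $E$ is non-normal, then by $\edim\le 4$ it is a projection of $\bF_{m-2;1}$ or of $\bF_{m-4;2}$ as in \cite{Rei94}. In the $\bF_{m-4;2}$ case $E$ carries a conic fibration $g_E\colon E\to C_0\cong\bP^1$ whose general fiber is a pair of lines; noting that $\mu\colon(Y,E)\to X$ is an lc blow-up with $E$ Cartier, I would extend $g_E^{*}\cO_{C_0}(1)$ to a line bundle on $Y'=Y\times_X X'$ that is semiample over $X'$ by Proposition~\ref{prop:Pic-lc} and take its relative ample model $g'\colon Y'\to Z'$, which contracts $E'$ to a curve $B_{Z'}$; then $Z'\to X'$ is crepant, so $Z'$ is Gorenstein canonical, and for general $z\in B_{Z'}$ the fiber of $Y'\to Z'$ has at least two irreducible components, so the general transversal slice of $Z'$ at $z$ is a du Val surface singularity with at least two exceptional curves; hence $z\in Z'$ is a transversal $A_k$ ($k\ge 2$), $D_k$, or $E_k$ singularity, all of which have local volume $\le 9$ by Example~\ref{expl:quotient}, and Theorem~\ref{thm:nv-crepant} together with Proposition~\ref{prop:nv-analytic} gives $\hvol(x,X)=\hvol(x',X')<\hvol(z,Z')\le 9$. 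In the $\bF_{m-2;1}$ case I would rerun the local computation from \cite[Proof of Proposition~3.4]{LX19} used in Proposition~\ref{prop:crex=1}, comparing $\det\cE$ with $\omega_Y|_C$ along the non-normal locus $C$ of $E$, to conclude $m\le 7$, so that $\hvol(x,X)\le 7<9$.

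The main obstacle is that, unlike in Proposition~\ref{prop:crex=1} where $Y_{\sing}\cap E$ was finite, here $Y$ may be singular along a curve. In the $\bF_{m-2;1}$ case this means the local generators of $\omega_Y|_C$ must be recomputed at the points where $Y$ has a transversal $A_1$ along $C$, or along a curve meeting $C$, carefully accounting for the extra contributions to $\deg\omega_Y|_C$ so that the inequality $m\le 7$ --- or at least $m\le 8$, which still suffices --- is preserved. In the $\bF_{m-4;2}$ case one must verify that the transversal $A_1$ locus of $Y$ does not force the generic point $z\in B_{Z'}$ to become a singularity worse than transversal du Val; this ought to follow because $Z'$ is canonical and general hyperplane sections of canonical threefold singularities are canonical, hence du Val in the surface case, but it needs to be checked. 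Ruling out that $g_E$ could have a double line as its general fiber is a further point to settle.
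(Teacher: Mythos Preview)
Your outline is sound through the normal case and the $\bF_{m-4;2}$ case, but the $\bF_{m-2;1}$ case has a genuine gap, and it is exactly the obstacle you flag: here $Y$ is singular along the entire non-normal locus $C$ of $E$. The LX19 computation you want to rerun uses in an essential way that $C\not\subset Y_{\sing}$, because the local generator of $\omega_Y|_C$ is $f_{x_4}^{-1}\,dx_1\wedge dx_2\wedge dx_3$ with $f_{x_4}|_C=g|_C$, and one needs $g|_C\neq 0$ to define $l=\ord_y(g|_C)$. When $Y$ has transversal $A_1$-singularities along $C$, one has $g|_C\equiv 0$ and, more fundamentally, $\cT_Y|_C$ is no longer a rank-$3$ bundle into which $\cE$ embeds with torsion cokernel; the whole $\det\cE$ versus $\omega_Y|_C$ bookkeeping collapses. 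So the inequality $m\le 7$ (or $m\le 8$) cannot be salvaged by ``recomputing local generators''.

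In fact the paper shows that the bound you hope for is \emph{false}: after blowing up $Y$ along $C$ to extract the second crepant divisor $S$, one gets a conic bundle $\pi\colon S\to C$ with sections $A$, $B$ (the strict transforms of the fiber and negative section of $\bF_{m-2}$), and a direct intersection computation gives $(A^2)_S=-3$, $(B^2)_S=m-5$. Contracting the $(-1)$-components of the singular fibers identifies a minimal model $\bF_3$ and forces $m=10-k$, where $k$ is the number of singular fibers. So $m=10$ genuinely occurs when $\pi$ is smooth; in that case $\hvol_{X,x}(\ord_E)=10>9$ and no estimate via $\ord_E$ suffices. The paper handles $k\ge 2$ trivially, $k=1$ by observing that $m=9$ would force $E$ to be a Koll\'ar component (impossible since $E$ is non-normal), and $k=0$ by running a $(K_{\tY}+\epsilon\,\overline{E})$-MMP to produce a plt blow-up with Koll\'ar component $S'\cong\bP(1,1,3)$, giving $\hvol(x,X)\le(-K_{S'})^2=\tfrac{25}{3}<9$. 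This last step is the missing idea in your proposal.
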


\begin{proof}
Since $E$ is Cartier in $Y$ and $-E$ is ample, we know that $E$ is an integral Gorenstein del Pezzo surface by adjunction.
Similar to the proof of Proposition \ref{prop:crex=1}, we only need to consider the cases where $E$ is a projection of $\bF_{m-2;1}$ or $\bF_{m-4;2}$. The $\bF_{m-4;2}$ case follows from Artin approximation results in Section \ref{sec:Artin} by the same arguments in the proof of  Proposition \ref{prop:crex=1}. Thus we may assume that $E$ is a projection of $\bF_{m-2;1}$. 

Let $C$ be the non-normal locus of $E$ and $\nu:\oE \to E$ the normalization. Then $\nu^{-1}(C)=A\cup B$ where $A$ is a fiber of $\oE \cong \bF_{m-2}$ and $B$ is the negative section. Let $y = \nu(A\cap B)\in C$. Since $E$ is Cartier in $Y$, we know that $Y$ is smooth along $E\setminus C$. By the assumption $\crex(y,Y) = 2$, we know that there is a crepant exceptional divisor $S$ over $Y$ whose center is contained in $E$. Since $Y$ has cDV singularities, we know that the center of $S$ cannot be a point by Theorem \ref{thm:crex-cDV}. Thus the center of $S$ in  $Y$ is the curve $C$ which implies that $Y$ is singular along $C$. We can assume that $\hvol(y', Y)>9$ for every point $y'\in C$, as otherwise we have $\hvol(x,X)< \hvol(y', Y)\leq 9$ by Theorem \ref{thm:nv-crepant}. Then Proposition \ref{prop:non-iso-hyp}  implies that $Y$ has either transversal $A_1$-singularities or $D_\infty$-singularities along $C$.

Next, let $\phi:\tY\to Y$ be the blow-up of $Y$ along $C$. By simple computations using the local equations of transversal $A_1$-singularities or $D_\infty$-singularities we know that $\phi$ is a log resolution of $Y$ extracting a smooth crepant exceptional  divisor $S$. Clear, $\pi=\phi|_S: S\to  C$ is a conic bundle. Moreover, from the local equation of $E$ at $y$ we know that the strict transform $\phi_*^{-1}E$ is isomorphic to the normalization $\oE$, and we identify them for simplicity. 
% We can write $f = x_3^2 + x_2 (x_4 + x_2^2 - x_1x_3)$.
%Thus locally at $y$ we know that $Y$ is  an  $A_1$-surface singularity times $\bC$.  Denote by $S$ the exceptional divisor of $\phi$. Clearly  $\pi: S\to C$ is a conic bundle which is smooth over a neighborhood of $y$, hence $\pi$ is smooth over $C\setminus \{y'\}$ by the $\bG_m$-action. Moreover,  $\phi_*^{-1} E$ is isomorphic to $\oE$ which we denote by $\oE$. 
Then $A+B = \oE|_{S}$ gives a fiberwise hyperplane section of $S$ over $C$ which implies that both $A$ and $B$ are sections of $\pi$. 
Since $A$ intersects $B$ transversally at a single point in $S$ which is contained in the fiber $F_y := \pi^{-1}(y)$, we know that $F_y$ is smooth. Denote by $F_1,\cdots, F_k$ the singular fibers of $\pi$. Then every $F_{i}$ is a union of two different lines where $A$ intersects one line denoted by $\ell_{i}^+$ and $B$ intersects the other line denoted by $\ell_{i}^-$.

Next, we compute some intersection numbers. 
Clearly $\phi^* E = S + \oE$. Thus we have
\[
(A\cdot \oE) + (A\cdot S) = (A\cdot \phi^* E) = (\phi_* A \cdot E) = (C\cdot E) = -1
\]
and  
\[
(A\cdot S) = (A\cdot (A+B))_{\oE} = 1,
\]
which implies that 
\[
(A\cdot (A+B))_{S} = (A\cdot \oE) = -2.
\]
Similarly, 
\[
(B\cdot \oE) + (B\cdot S) = (B\cdot \phi^* E) = (\phi_* B \cdot E) = (C\cdot E) = -1
\]
and  
\[
(B\cdot S) = (B\cdot (A+B))_{\oE} = 1-(m-2),
\]
which implies that 
\[
(B\cdot (A+B))_{S} = (B\cdot \oE) = -2+(m-2) = m-4.
\]
Since $(A\cdot B)_S = 1$, we have that $(A^2)_S = -3$ and $(B^2)_S = m-5$.

Since $\ell_i^{-}$ are disjoint $(-1)$-curves, we can contract all to get a birational morphism $\sigma : S\to S^{\rm m}$ where $\pi$ descends to a smooth $\bP^1$-fibration $\pi^{\rm m}: S^{\rm m}\to C$. Denote by $A^{\rm m}:= \sigma_* A$ and $B^{\rm m}:=\sigma_* B$ the two sections of $\pi^{\rm m}$. Since $\sigma$ is isomorphic in a neighborhood of $A$, we know that 
\[
((A^{\rm m})^2) = (A^2) = -3,\qquad (A^{\rm m}\cdot B^{\rm m}) = 1,\quad\textrm{and}\quad ((B^{\rm m})^2) = (B^2) + k = m-5+k. 
\]
Thus we have $S^{\rm m} \cong \bF_3$ with $A^{\rm}$ the negative section.
Let $F^{\rm m}$ denote the fiber class of $\pi^{\rm m}$. Then $((B^{\rm m} - A^{\rm m})\cdot A^{\rm m}) = 4$ implies that $B^{\rm m} \sim A^{\rm m} + 4F^{\rm m}$. Then we have 
\[
m-5+k = ((B^{\rm m})^2) = (A^{\rm m} + 4F^{\rm m})^2 = 5,
\]
i.e.\ $m = 10 - k$.

When $k\geq 2$, then we have $\hvol(x,X) \leq m \leq 8$ so we are done. When $k  = 1$, then we have $\hvol(x,X) \leq m = 9$. Moreover, the equality cannot hold as otherwise $\ord_E$ is a minimizer of $\hvol_{X,x}$ which by \cite{LX20} implies that $E$ is a Koll\'ar component, contradicting to the fact that $E$ is non-normal.

%is singular for some $y'\in C$, denote by $y_1',\cdots, y_k'\in C$ the singular fibers, then we \YL{use blow-up of $\bF_3$, conclude that $(B^2) = m-5 \leq 4$.}

We are left with the case when $k = 0$, i.e.\ $\pi$ is smooth and $S = S^{\rm m}$. 
%If $S_{y'}$ is smooth, then $\pi$ is smooth which implies that $S\cong \bF_3$ with $A$ as the negative section. Moreover, since both $S$ and $\oE$ are Cartier in $\tY$ and smooth, we know that $\tY$ is smooth. If the fiber class of $\pi$ is denote by $F$, then $((B-A)\cdot A)_S = 4$ implies that $B\sim A+4F$. 
By the same argument before, we have  $(B^2)_S = 5$ and $m=10$. Since $\tY$ is smooth, we can run an $(K_{\tY}+\epsilon\oE)$-MMP on $Y$ over $X$ for $0<\epsilon\ll 1$. The Mori cone of curves on $\tY/X$ is generated by $A$ and $F=F^{\rm m}$ as $B\sim A+4F$ on $S$. Moreover, by computations $(A\cdot \oE) = -2$ and $(F\cdot \oE) = 2$, we know that $A$ is the only $(K_{\tY}+\epsilon\oE)$-negative extremal ray. Thus the first step of the MMP gives us a divisorial contraction $\psi:\tY\to Y'$ which contracts $\oE$ to a rational curve $B'$. Since $(A\cdot B)_{\oE} = 1$ we know that $B\to B'$ is bijective. Moreover, by the fact that $(A\cdot (S+\frac{1}{2}\oE) )=0$ we know that $S+\frac{1}{2}\oE = \psi^* S'$ where $S' := \psi_* S$. Since $(\tY, S+\frac{1}{2}\oE)$ is plt by inversion of adjunction, we know that $(Y', S')$ is plt and hence $S'$ is normal. Denote the different divisor of $(Y',S')$ along $S'$ by $\Delta_{S'}$. Since $S\to S'$ only contracts $A$ to a point, we conclude that $S'\cong \bP(1,1,3)$ and hence $-S'|_{S'}$ is ample. This implies that
\[
\hvol(x,X) \leq \hvol_{X,x}(\ord_{S'}) = (-K_{S'}-\Delta_{S'})^2 \leq (-K_{S'})^2 = \tfrac{25}{3}<9.
\]
Thus the proof is finished.
\end{proof}

\begin{prop}\label{prop:crex=2-2}
    Theorem \ref{thm:nv-can} holds if $\crex(x,X) = 2$, the blow-up $\mu:Y\to X$ extracts only one prime divisor $E$, and $E$ is not Cartier in $Y$.
\end{prop}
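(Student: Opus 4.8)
The plan is to reduce, via a quasi-\'etale double cover, to the cases already settled in Propositions~\ref{prop:crex=1} and~\ref{prop:crex=2-1}; producing this cover globally is where the Artin-approximation results of Section~\ref{sec:Artin} enter.

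First I would run the index-$1$ cover argument. Since $E$ is not Cartier in $Y$, there is a point $y\in\mu^{-1}(x)$ at which $E$ has Cartier index $d\geq 2$; let $(\ty\in\tY)\to(y\in Y)$ be the index-$1$ cover of $E$ at $y$. By Theorems~\ref{thm:finite-deg} and~\ref{thm:nv-crepant} we have $\tfrac1d\hvol(\ty,\tY)=\hvol(y,Y)>\hvol(x,X)$. If $\ty\in\tY$ is singular, then $\hvol(\ty,\tY)\leq 16$ by Theorem~\ref{thm:ODP-gap}, so $\hvol(x,X)<8$; if $d\geq 3$, then $\hvol(\ty,\tY)\leq 27$ and $\hvol(x,X)<9$. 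So we may assume $d=2$ and $\ty\in\tY$ smooth, whence $y\in Y$ is a cyclic quotient $\bA^3/\bmu_2$; since $Y$ is Gorenstein, the only type acting freely in codimension one is $\tfrac12(1,1,0)$, a transversal $A_1$-singularity. Thus $Y$ is singular along a curve $C\ni y$, the double cover $\tY\to Y$ is ramified exactly along $C$, and since the non-trivial class in $\mathrm{Cl}(\cO_{Y,y})\cong\bZ/2$ is supported along $C$, we get $C\subseteq E$. We may further assume $\hvol(y',Y)>9$ for every $y'\in\mu^{-1}(x)$, since otherwise Theorem~\ref{thm:nv-crepant} gives $\hvol(x,X)<9$; together with Proposition~\ref{prop:non-iso-hyp} this shows that the non-Cartier locus of $E$ in $Y$ is a curve $C\subseteq E$ along which $Y$ has generically transversal $A_1$-singularities and $E$ has Cartier index $2$.

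Next comes the key step. Near $C$ the index-$1$ cover of $E$ is a double cover that simultaneously resolves the transversal $A_1$-singularity and renders $E$ Cartier, and these local covers glue over a neighborhood of $C$ in $Y$. Using the Artin-approximation results of Section~\ref{sec:Artin}, after replacing $(x\in X)$ by an \'etale neighborhood (which by Propositions~\ref{prop:nv-analytic} and~\ref{prop:crex-analytic} changes neither $\hvol(x,X)$ nor the crepant data) I would extend this to a finite quasi-\'etale degree-$2$ morphism $\pi\colon(\hx\in\hX)\to(x\in X)$, ramified only over $x$, sitting in a commutative square with a crepant projective birational morphism $\hat\mu\colon\hY\to\hX$ for which $\hat E:=\pi^{-1}(E)$ is a Cartier prime divisor with $\hat E=\hat\mu^{-1}(\hx)$ and $-\hat E$ ample over $\hX$; moreover $\hX$ is again Gorenstein canonical. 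By Theorem~\ref{thm:finite-deg} and Proposition~\ref{prop:nv-analytic} we then have $\hvol(\hx,\hX)=2\hvol(x,X)$. Finally I would bound $\hvol(\hx,\hX)$: if $\hx\in\hX$ is a hypersurface singularity, Theorem~\ref{thm:ODP-gap} gives $\hvol(\hx,\hX)\leq 16$, so $\hvol(x,X)\leq 8$; otherwise $\hat\mu$ is a crepant birational morphism of a Gorenstein canonical non-hypersurface singularity extracting the single Cartier divisor $\hat E$ with $-\hat E$ ample, so $\hvol(\hx,\hX)\leq\hvol_{\hX,\hx}(\ord_{\hat E})=(\hat E^3)$, and reducing to the case that $\hY$ has cDV singularities and applying the del Pezzo analysis in the proofs of Propositions~\ref{prop:crex=1} and~\ref{prop:crex=2-1} we get $\hvol(\hx,\hX)\leq 9$, whence $\hvol(x,X)\leq\tfrac92<9$. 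In all cases $\hvol(x,X)<9$, so in particular the equality case of Theorem~\ref{thm:nv-can} does not occur here.

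The hard part is the middle step: the double cover exists only \'etale-locally on $Y$ near $C$, so extending it to a genuine quasi-\'etale cover $\hX\to X$, and checking that the base-changed blow-up still extracts a single Cartier exceptional divisor, is exactly where the machinery of Section~\ref{sec:Artin} is required; one must also verify that on $\hX$ we land in the setting of Propositions~\ref{prop:crex=1} or~\ref{prop:crex=2-1} and never back in the present case (which is guaranteed because $\hat E$ is Cartier). A more hands-on alternative, parallel to the $\bF_{m-2;1}$ case in the proof of Proposition~\ref{prop:crex=2-1}, is to blow up $C$ directly: the blow-up $\phi$ is then crepant, $\phi_*^{-1}E$ becomes Cartier, a ruled divisor $S$ over $C$ is extracted with $\phi^{*}E=\phi_*^{-1}E+\tfrac12 S$, and the blown-up threefold has cDV singularities; one then runs a $(K+\epsilon\,\phi_*^{-1}E)$-MMP over $X$ to contract $\phi_*^{-1}E$ and estimates $\hvol$ via adjunction on the resulting model, but tracking the $\tfrac12 S$-contribution and the possible non-normality of $E$ makes this route delicate.
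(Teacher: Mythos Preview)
Your opening paragraph (the local index-$1$-cover analysis forcing $Y$ to have transversal $A_1$-singularities along a curve $C\subset E$ with $E$ of Cartier index~$2$ there) is correct and matches the paper. The central step, however, has a real gap. You assert that after an \'etale base change the local index-$1$ covers of $E$ along $C$ globalise to a quasi-\'etale double cover $\hX\to X$, invoking Section~\ref{sec:Artin}; but Propositions~\ref{prop:Pic-lc} and~\ref{prop:Pic-flop} only extend line bundles from $E$ to $Y'$ and factor contractions---they do not produce covers. Concretely, the cover you want corresponds to a $2$-torsion class in $\Cl(Y')$ that is non-Cartier along $C$; since $[E']$ generates $\Cl(Y')/\Pic(Y')\cong\bZ/2$, this is equivalent to finding $L'\in\Pic(Y')$ with $2L'\sim 2E'$, i.e.\ a square root of $\cO_{Y'}(2E')$. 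Restricting to $E$ (where by adjunction $2E|_E\sim 2K_E+C$, using $K_Y=\mu^*K_X$ and that the different is $\tfrac12C$), a necessary condition is $[C]\in 2\Pic(E)$. This parity condition can genuinely fail: e.g.\ when $E$ is a smooth scroll $\bF_{a;b}$ and $C$ is a section, its class has odd coefficient on the section generator of $\Pic(E)\cong\bZ^2$. So the cover need not exist and your reduction breaks. Even granting the cover, your endgame is not automatic: the del Pezzo analyses in Propositions~\ref{prop:crex=1} and~\ref{prop:crex=2-1} use that $\mu$ is the blow-up of $\fm_{X,x}$ and that $\crex\le 2$ to control $Y_{\sing}$ and to handle the $\bF_{m-2;1}$, $\bF_{m-4;2}$ cases; on $\hX$ you only know $\hat\mu$ is a crepant extraction of a single Cartier divisor, so those arguments would have to be redone.

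The paper takes an entirely different route and never builds a cover. After the same local step it cuts by a general hyperplane $H$: then $x\in H$ is Gorenstein elliptic with canonical modification $H_Y=\Bl_xH$, and Laufer's theory pins down the fundamental cycle on the minimal resolution of $H$ as $2\tC_1+\sum_{i\ge2}\tC_i$, where $\tC_1$ is the strict transform of $C_1:=E\cap H_Y$ and the $\tC_i$ are $(-2)$-curves over the $A_1$-points $C\cap H_Y$. A short cohomology computation then shows $C_1$ is a rational normal curve, hence $E\subset\bP^e$ is a surface of minimal degree. The classification \cite{EH87} leaves $E$ as $\bP^2$, a quadric, $\bF_{a;0}$ with $a\le 3$, or a smooth scroll $\bF_{a;b}$; in the first three cases $E$ is a Koll\'ar component with different $\tfrac12C$, giving $\hvol(x,X)\le(-K_E-\tfrac12C)^2<(-K_E)^2\le 9$, and in the Hirzebruch case Proposition~\ref{prop:Pic-lc} is used only to contract $E$ along its $\bP^1$-fibers after \'etale base change, reducing to either a $\crex=1$ point or a transversal $A_{\ge 2}$/$D$/$E$ point on the base.
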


\begin{proof}
Suppose that $E$ is not Cartier at some point $y\in Y$. Then let $(\ty\in \tY) \to (y\in Y)$ be an index $1$ cover of $E$ at $y$. Then by the same argument as the second paragraph of the proof of Proposition \ref{prop:crex=1}, we know that $Y$ has transversal $A_1$-singularities along a curve $C$ such that $y\in C\subset E$ and $E$ has Cartier index $2$ in $Y$ along $C$. Since $\crex(x, X) = 2$ and both $E$ and the exceptional divisor of the blow-up of $Y$ along $C$ are crepant exceptional divisors over $x\in X$, we know that $C$ is an irreducible smooth projective curve and $(Y_{\sing}\cap E)\setminus C$ is a finite set. In the below, we often treat $E\subset \bP^{e}$ as an embedded projective variety from the blow-up structure. 

Next, let $x\in H$ be a general hyperplane section of $x\in X$. Let $H_Y\subset Y$ be the strict transform of $H$. Then by Lemma \ref{lem:can-elliptic} and Theorem \ref{thm:Laufer-blowup} we know that $x\in H$ is a Gorenstein elliptic surface singularity and $\mu_H: H_Y\to H$ is a canonical modification. Moreover, we know that $\mu_H$ has a single exceptional curve $C_1= E\cap H_Y$ and that $H_Y$ has only $A_1$-singularities at a non-empty finite set $C\cap H_Y$ and smooth elsewhere. Let $f_H: \tH\to H$ be the minimal resolution. Then $f_H$ factors through $g_H:\tH\to H_Y$ being a minimal resolution of the $A_1$-singularities. Let $\tC_2,\cdots, \tC_k$ be the exceptional curves of $g_H$. Denote by $\tC_1\subset \tH$ the strict transform of $C_1$. Then we know that $\mathrm{Ex}(f_H) = \cup_{i=1}^k \tC_i$ and $\tC_2,\cdots, \tC_k$ are disjoint $(-2)$-curves such that $(\tC_1 \cdot \tC_i) = 1$ for any $i\geq 2$. Thus by Theorem  \ref{thm:laufer-resol} we know that $f_H$ is a log resolution, $\tC_1\cong \bP^1$, and $(\tC_1^2)\leq -3$.

Next, we claim that $Z = 2\tC_1 + \sum_{i=2}^k \tC_i$ is the fundamental cycle of $x\in H$ in $\tH$.  Denote by $Z_{\red}:= \sum_{i=1}^k \tC_i$ and $Z':= \tC_1 + Z_{\red}$. Since $E$ is not Cartier in $Y$ we know that $\ord_E \fm_{X,x} \geq 2$ which implies that $Z\geq 2\tC_1$ by \cite[Theorem 4.57]{KM98}. Moreover, $\Supp(Z) = \Supp(Z_{\red})$ implies that $Z \geq Z'$. Then by the definition of Gorenstein elliptic singularities we know that 
\[
0\leq \chi(\cO_{Z'}) = \chi(\cO_{\tC_1}) + \chi(\cO_{Z_{\red}}) - (Z_{\red}\cdot \tC_1) = 2 - (\tC_1^2) - (k-1). 
\]
Here we use the fact that $Z_{\red}$ is a tree of $\bP^1$'s. 
Thus we have 
$(Z'\cdot \tC_i) = 0$ for any $i\geq 2$ and 
\[
(Z'\cdot \tC_1) = 2(\tC_1^2) + (k-1) \leq (\tC_1^2) + 2 <0. 
\]
As a result, we have $Z\leq Z'$ which implies $Z = Z'$ combining with the earlier discussion. In particular, $\chi(\cO_Z)=0$ implies that $(\tC_1^2) = 3-k$. 

Next, we show that $C_1$ is a rational normal curve. Indeed, we have a short exact sequence
\[
0\to \cO_{\tH}(-Z_{\red})/\cO_{\tH}(-Z) \to \cO_Z \to \cO_{Z_{\red}} \to 0.
\]
Since $Z = Z_{\red}+ \tC_1$, we know that  $\cO_{\tH}(-Z_{\red})/\cO_{\tH}(-Z) \cong \cO_{\tC_1}\otimes \cO_{\tH} (-Z_{\red})$. 
Thus twisting the exact sequence by $\cO_{\tH}(-Z)$ and taking long exact sequence yields
\[
H^0(Z, \cO_Z(-Z)) \to H^0(Z_{\red}, \cO_{Z_{\red}}(-Z)) \to H^1(\tC_1, \cO_{\tC_1}\otimes \cO_{\tH} (-Z_{\red}-Z)).
\]
Since $(\tC_1^2)=3-k \leq -3$, we have $k\geq 6$. Thus we have
\begin{align*}
\deg \cO_{\tC_1}\otimes \cO_{\tH} (-Z_{\red}-Z) & = (\tC_1 \cdot (-Z_{\red}- Z) = -3(\tC_1^2) - 2(k-1)\\ & = -3(3-k) - 2(k-1) = k-7\geq -1.
\end{align*}
Thus $H^1(\tC_1, \cO_{\tC_1}\otimes \cO_{\tH} (-Z_{\red}-Z))=0$ as $\tC_1\cong \bP^1$. Hence we have a surjection $H^0(Z, \cO_Z(-Z)) \twoheadrightarrow H^0(Z_{\red}, \cO_{Z_{\red}}(-Z))$. Since $Z_{\red}$ is a tree of $\bP^1$'s where $\cO_{Z_{\red}}(-Z)$ is trivial on all the $(-2)$-curves $\tC_i$ for $i\geq 2$, we know that $H^0(Z_{\red}, \cO_{Z_{\red}}(-Z)) \cong H^0(\tC_1, \cO_{\tC_1}(-Z))$ which implies that $\tC_1\to C_1\subset \bP^{e-1}$ is defined by a complete linear system. Thus $C_1\subset \bP^{e-1}$ is a rational normal curve in some projective subspace. Since $C_1$ is a general hyperplane section of $E\subset \bP^e$, we know that  $E$ is a surface of minimal degree in some projective subspace. 

Next, we use the classification of surfaces of minimal degree \cite{EH87} to conclude that $E$ is $\bP^2$ embedded via $|\cO(1)|$ or $|\cO(2)|$, a quadric surface in $\bP^3$, a cone over a rational normal curve $\bF_{a;0}$, or a smooth rational normal scroll $\bF_{a;b}$ with $b\geq 1$. In all cases, we have $(E, \frac{1}{2}C_1)$ is klt and $-E$ is ample which implies that $E$ is a Koll\'ar component over $x\in X$. Here we use the fact that $C_1$ is a smooth Cartier divisor in $E$ along which $E$  has index $2$ in $Y$. Thus we have
\[
\hvol(x,X) \leq \hvol_{X,x}(\ord_E) = (-K_E - \tfrac{1}{2}C)^2 < (-K_E)^2.
\]
If $E$ is $\bP^2$ or a quadric surface, then $\hvol(x,X) <(-K_E)^2 \leq 9$.
If $E\cong\bF_{a;0}\cong \bP(1,1,a)$, then by embedding dimension constraints as $E\subset Y$ and $Y$ has only hypersurface singularities we know that $a\leq 3$. Hence 
$\hvol(x,X) < (-K_E)^2 = \frac{(a+2)^2}{a}\leq 9$.

The only case left is when $E\cong \bF_{a;b}$ is a smooth Hirzebruch surface. Since $\dim X = 3$, we know that $\mu:(Y,E) \to X$ is an integrally $\bQ$-Cartier plt blow-up in the sense of Definition \ref{def:Cartier-lc-blowup}. Let $L_E$ be the nef line bundle on $E$ as the pullback of $\cO_{\bP^1}(1)$ under the $\bP^1$-fibration $E\to \bP^1$. Then by Proposition \ref{prop:Pic-lc} after an \'etale base change $(x'\in X') \to (x\in X)$ we can contract $E'= E\times_X X'$ in $Y'= Y\times_X X'$ along the fibers to get a factorization $Y'\to W'\to X'$ such that the exceptional locus of $W'/X'$ is a rational curve $\Gamma'$. Moreover, since $Y'$ has transversal $A_1$-singularities along $C'= C\times_X X'$, there are two possibilities: either $C'$ is contracted to a point $w'\in \Gamma'$ which creates a singularity with $\crex(w',W')=1$, or $C'$ dominates $\Gamma'$ which implies that $W'$ has a transversal $A_k ~(k\geq 2)$, $D_k$, or $E_k$-singularity at a general point $w\in \Gamma'$. In the former possibility, by Theorem \ref{thm:nv-crepant} and Proposition \ref{prop:crex=1} we have 
\[
\hvol(x,X)= \hvol(x', X') < \hvol(w',W') \leq 9.
\]
In the latter possibility, by Theorem \ref{thm:nv-crepant} we have
\[
\hvol(x,X)= \hvol(x', X') < \hvol(w,W') \leq 9.
\]
Thus the proof is finished.
\end{proof}

%\YL{add argument using Artin approximation. Show that $E$ is a normal surface of minimal degree (use Laufer's classification of fundamental cycles) and that $(E, \Delta_E)$ has index $2$, then use classification to show that either $E$ is a cone over RNC or a Hirzebruch surface. The cone case we have degree bound by embedding dimension; the Hirzebruch case it is a crepant log smooth plt blowup and we contract to the base using Artin approximation.}

\begin{prop} \label{prop:crex=2-3}
    Theorem \ref{thm:nv-can} holds if $\crex(x,X) = 2$ and the blow-up $\mu:Y\to X$ extracts precisely two prime divisors $E_1$ and $E_2$.
\end{prop}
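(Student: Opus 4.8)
The plan is to refine the argument used for $\crex(x,X)=1$: first pin down the two crepant exceptional divisors $E_1,E_2$ extracted by $\mu$ by means of a general hyperplane section, and then run a crepant MMP over $X$ that contracts one of them, controlling the singularity that is created.

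First I would record the basic structure of $\mu$. Since $E_1$ and $E_2$ are the only crepant exceptional divisors over $x\in X$, there is no crepant exceptional divisor over any closed point of $\mu^{-1}(x)$ nor over the generic point of any curve contained in $\mu^{-1}(x)$; by Theorem \ref{thm:crex-cDV}(1), $Y$ is cDV along $\mu^{-1}(x)$, and since a transversal du Val singularity along a curve would produce an extra crepant exceptional divisor, $Y$ has only isolated cDV (hence terminal hypersurface) singularities in a neighborhood of $\mu^{-1}(x)$; in particular a general hyperplane section of $Y$ is smooth near the fiber over $x$. Next I would analyze a general hyperplane section $x\in H$ of $x\in X$: by Lemma \ref{lem:can-elliptic} it is minimally elliptic, with $e=\mult_x X=\mult_x H\geq 4$ and $\edim(x\in H)=e$, and by Theorem \ref{thm:Laufer-blowup} its canonical modification is $H_Y=\Bl_x H$, which by the previous remark is smooth and hence is its minimal resolution, with exceptional curves $C_i:=E_i\cap H_Y$. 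Proposition \ref{prop:fund-cycle-2} then gives $C_i\cong\bP^1$, $(C_1\cdot C_2)_{H_Y}=2$, and reduced fundamental cycle $Z=C_1+C_2$; combining $K_Y\cdot C_i=0$ with adjunction on $E_i\subset Y$ along $C_i$ shows $(C_i^2)_{E_i}=b_i-2=:d_i\geq 1$, where $-b_i:=(C_i^2)_{H_Y}$. Since $C_i$ is a general hyperplane section of $E_i$, this means each $E_i$ is a surface of minimal degree $d_i$; by the classification \cite{EH87}, $E_i$ is $\bP^2$, a Veronese surface, a quadric, a cone $\bF_{a;0}$, or a smooth scroll $\bF_{a;b}$ ($b\geq 1$), and $E_1$ and $E_2$ are glued along a curve supported in a line of each ambient space, with $d_1+d_2=e$.

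With this structure in hand I would run the $E_1$-MMP over $X$; as $K_Y\equiv_X 0$, every step is a flop or a crepant divisorial contraction, and, choosing at each step (when several extremal rays are available) to contract the ruling of $E_1$ transverse to $E_1\cap E_2$, the output is a crepant birational morphism $\mu_W\colon W\to X$ with $W$ Gorenstein canonical, $\mathrm{Ex}(\mu_W)$ equal to the strict transform of $E_2$, and $E_1$ contracted onto a point or a curve $T\subset W$. When this contraction proceeds along a family of curves carrying a nef but non-ample line bundle on $E_1$ (the cone and scroll cases), one first passes to an \'etale neighborhood and extends that line bundle via Proposition \ref{prop:Pic-lc}, exactly as in the proof of Proposition \ref{prop:crex=1}. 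If $E_1$ is contracted to a point $p$, then $\crex(p,W)=1$, so Proposition \ref{prop:crex=1} gives $\hvol(p,W)\leq 9$ and Theorem \ref{thm:nv-crepant} yields $\hvol(x,X)<\hvol(p,W)\leq 9$. If $E_1$ is contracted to a curve $T$, then $\crex$ at the generic point of $T$ is $1$, so $W$ has transversal $A_1$-singularities along $T$; if $T\subseteq\mathrm{Ex}(\mu_W)$ then Theorem \ref{thm:nv-crepant} (applied at $\eta_T$) gives $\hvol(x,X)<\hvol(\eta_T,W)=2$, while if $T$ dominates a curve $\Gamma\ni x$ of $X$ then $X$ itself is transversal $A_1$ along $\Gamma$, and since $\Bl_\Gamma X\to X$ is crepant we get $\hvol(x,X)\leq\hvol(\eta_\Gamma,X)=2$. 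The combinatorics of the previous step should ensure that, by choosing which of $E_1,E_2$ to contract and which ruling, one always lands in one of these situations, and the residual degenerate configurations (both $E_i$ cones) force $x\in X$ to be a cone, whose local volume I would then compute directly.

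I expect the main obstacle to be exactly the intermediate case flagged in the introduction: a single crepant divisorial contraction can create a transversal $A_1$-singularity along a curve $T$, whose local volume at a general \emph{closed} point of $T$ is $\tfrac{27}{2}>9$, so applying Theorem \ref{thm:nv-crepant} at a closed point is too weak. The way out is to argue at the generic point of $T$ (or of its image in $X$), where the local volume drops to $2$, and this in turn forces one to use the minimal-degree description of $E_1,E_2$ from Step 2 and to choose carefully which divisor and which ruling to contract, so that $T$ (or $\Gamma$) is placed in the correct position relative to the exceptional locus.
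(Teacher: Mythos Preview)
Your proposal has a genuine gap at the crucial step. When $E_1$ is contracted to a curve $T$ along which $W$ has transversal $A_1$-singularities, you propose to apply Theorem~\ref{thm:nv-crepant} at the generic point $\eta_T$ and conclude $\hvol(x,X)<\hvol(\eta_T,W)=2$. This does not work: the local volume is defined for closed points of a variety, and Theorem~\ref{thm:nv-crepant} compares two klt singularities of the \emph{same} dimension. The number $2$ you quote is the local volume of a \emph{surface} $A_1$-singularity; at a general \emph{closed} point $w\in T$ of the threefold $W$ the local volume is $\tfrac{27}{2}$, exactly as you note in your last paragraph, and there is no inequality in the paper (or in the literature you cite) comparing $\hvol(x,X)$ to the $2$-dimensional invariant at $\eta_T$. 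The same objection applies to your alternative branch ``$T$ dominates a curve $\Gamma\ni x$'': the bound $\hvol(x,X)\le\hvol(\eta_\Gamma,X)=2$ again mixes dimensions. This is precisely the obstacle flagged in the introduction, and your proposed ``way out'' does not circumvent it.

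The paper's proof proceeds quite differently. It does not try to reduce to $\crex=1$ by contracting a single divisor. Instead it first shows $E=E_1+E_2$ is Cartier, so $E$ is a reducible Gorenstein del Pezzo surface, and obtains the strict inequality $\hvol(x,X)<(-K_E)^2$ from the uniqueness of the $\hvol$-minimizer. It then runs through Reid's classification \cite[Section~1.3]{Rei94} of such surfaces. In most cases embedding-dimension constraints or direct degree counts give $(-K_E)^2\le 9$. In the remaining cases (both components rational scrolls), the paper either contracts \emph{both} $E_1$ and $E_2$ along compatible rulings via Proposition~\ref{prop:Pic-lc} to produce a transversal $A_2$-singularity (local volume $9$ at a closed point, so Theorem~\ref{thm:nv-crepant} applies), or carries out a tangent-sheaf/canonical-bundle computation on the non-normal locus (parallel to Case~2 of Proposition~\ref{prop:crex=1}) to bound the degrees, or constructs an explicit Koll\'ar component with $(-K)^2\le\tfrac{25}{3}$. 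Your minimal-degree analysis of the $E_i$ is a reasonable start, but the MMP strategy after it needs to aim for a transversal $A_2$ (by contracting both divisors) rather than a transversal $A_1$.
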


\begin{proof}
Denote by $E:=E_1+E_2$. 
We know $Y$ has cDV singularities from earlier discussions. Moreover, the set $Y_{\sing}\cap E$ is finite by the assumption $\crex(x,X) = 2$.  Let $x\in H$ be a general hyperplane section, which is a Gorenstein elliptic surface singularity. Then by Bertini's theorem and Theorem \ref{thm:Laufer-blowup} we know that $\Bl_x H\to H$ is a minimal resolution with precisely two exceptional curves. Then by Proposition \ref{prop:fund-cycle-2} we have two $\bP^1$'s intersecting transversally at two points or forming a tacnode, and the fundamental cycle of $x\in H$ in $\Bl_x H$ is reduced. This implies that $\cO_Y(-E)$ is the inverse image of $\fm_{X,x}$ and hence $E$ is Cartier in $Y$. By adjunction, we know that $E$ is a reduced Gorenstein del Pezzo surface, each irreducible component $E_i$ is normal, and at every generic point of $C:=E_1\cap E_2$ the intersection is either transversal or tacnodal.

Since $A_X(E_i)= 1$ for $i = 1,2$, we have $\hvol(x,X)\leq \vol_{X,x}(\ord_{E_i})$. Moreover, we know that $\fa_j(\ord_{E_1}) \supset \fa_j(\ord_{E_1}) \cap \fa_j(\ord_{E_2}) = \mu_*\cO_Y(-jE)$ which implies that $\vol_{X,x}(\ord_{E_i})\leq -(-E)^3 = (-K_E)^2$. Combining these inequalities, we get $
\hvol(x,X) \leq (-K_E)^2$.
Moreover, if equality holds, then both $\ord_{E_1}$ and $\ord_{E_2}$ are minimizers of $\hvol_{X,x}$, contradicting the uniqueness of the minimizer (Theorem \ref{thm:SDC}). Thus we have 
\[
\hvol(x, X) < (-K_E)^2.
\]

Below, we split into three cases based on the classification from \cite[Section 1.3]{Rei94}. Note that $(-K_E)^2$ is the sum of degrees of $E_1$ and $E_2$ viewed as subvarieties of $\bP^e$. 

\smallskip

\textbf{Case 1.} \emph{($=$Case (B) from \cite[Section 1.3]{Rei94})} Two surfaces from (a1) (b) (c0) (d0) (e), not both from (a1). 

\emph{Subcase 1.1.} If none of the components is (e), then we get $(-K_E)^2 \leq 8$ and we are done. 

\emph{Subcase 1.2.} If precisely one of the components (say $E_2$) is (e), then there is a nef line bundle $L_E$ on $E$ such that $L_E|_{E_1}\cong -K_E|_{E_1}$ is ample and $L_E|_{E_2}$ is the pull-back of $\cO_{\bP^1}(2)$ under the $\bP^1$-fibration $E_2\to \bP^1$. Moreover, we know that $E$ is slc and Cartier in $Y$ which implies that $\mu:(Y,E)\to X$ is a Cartier lc blow-up. Thus by Proposition \ref{prop:Pic-lc}, after an \'etale base change $(x'\in X')\to (x\in X)$ we can contract $E_2' = E_2\times_X X'$ in $Y'=Y\times_X X'$ along the $\bP^1$-fibers only to get a factorization $Y'\xrightarrow{g'} Z'\to X'$. Let $C_{Z'}:=g'(E_2')\subset Z'$ which is a rational curve. Denote by $E_{Z'}:=g'_* E_1'= g'_* (E_1\times_X X')$ in $Z'$. Then we know that $Z'$ has transversal $A_1$-singularities at a general point of $C_{Z'}$ and that $g'^* E_{Z'} = E_1' + \frac{1}{2}E_2'$. Thus by adjunction we know that $(Z', E_{Z'})$ is plt and $\frac{1}{2}C_{Z'}$ is the different divisor on $E_{Z'}$. In particular, $E_{Z'}$ is normal and hence $E_1\cong E_1'\cong E_{Z'}$ by Zariski's main theorem. Thus we have $(E_{Z'},\frac{1}{2}C_{Z'}) \cong (E_1, \frac{1}{2}C)$ is a klt log Fano pair by running through all possibilities of (a1), (b), (c0), and (d0) where $E_1$ is one of $\bP^2$, $\bF_{2;0}$, $\bF_{0;1}$, or $\bF_{1;1}$. In particular, $Z'\to X'$ is a crepant plt blow-up extracting a Koll\'ar component $E_{Z'}$. Thus we have 
\[
\hvol(x,X) = \hvol(x', X') \leq (-K_{E_{Z'}} - \tfrac{1}{2}C_{Z'})^2 = (-K_{E_1} - \tfrac{1}{2}C)^2 < (-K_{E_1})^2 \leq 9.
\]
Here we use the fact that $-K_{E_1}$ is ample.

%Extend the line bundle to $Y$ and taking ample model yields a contraction of $E_2$ to $\bP^1$ and creates an $A_1\times \bC$-singularity along $C$ which is the image of $E_2$. Then one can show that $(E_1, \frac{1}{2}C)$ is a Koll\'ar component whose $\hvol \leq 4$. 

\emph{Subcase 1.3.} If both components are (e), then we have a fibration $\pi: E_1\cup E_2 \to \bP^1$ where each fiber of $\pi$ is a union of two $\bP^1$'s at a point. Thus by choosing $L_E:=\pi^*\cO_{\bP^1}(1)$ and applying Proposition \ref{prop:Pic-lc}, after an \'etale base change $(x'\in X')\to (x\in X)$ we get a factorization $Y'\xrightarrow{g'} Z'\to X'$ such that $g'$ contracts $E_1'\cup E_2'$ along the fibers of $\pi$ to a curve $\Gamma'\subset Z'$. Thus $Z'$ has a transversal $A_2$-singularity at a general point $z\in \Gamma'$. By Theorem \ref{thm:nv-crepant} we have 
\[
\hvol(x,X) = \hvol(x', X') < \hvol(z, Z') = 9.
\]

%Extend the pull-back of $\cO_{\bP^1}(1)$ to $Y$ and taking ample model, one gets $A_2\times \bC$-singularity and we are done.\footnote{\YL{again work in formal category}}

\smallskip

\textbf{Case 2.} \emph{($=$Case (D$_2$) from \cite[Section 1.3]{Rei94})} 
Two surfaces from (a3)(c2), not both from (a3). 

By embedding dimension constraints, if we take a surface $\bF_{a;0}$ from (c2) then its degree $a\leq 3$, which implies that $(-K_E)^2\leq 6$ (note that (a3) has degree $1$).

\smallskip

\textbf{Case 3.} \emph{($=$Case (C$_2$) from \cite[Section 1.3]{Rei94})}  Two surfaces from (a2)(c1)(d1), not both from (a2). Then we know that $\nu: E_1\sqcup E_2 = \oE \to E$ is the normalization, and that $\mu:(Y,E)\to X$ is a Cartier lc blow-up.

\emph{Subcase 3.1.} If none of the components is (d1), then similar to Case 2 we have the sum of degrees $(-K_E)^2 \leq 6$. 

\emph{Subcase 3.2.} If both components are (d1) (say $E_i\cong \bF_{a_i;1}$), then we analyze the gluing of $E_1$ and $E_2$. Let $A_i$ and $B_i$ denote the fiber and negative section in $E_i$ respectively such that $A_i+B_i$ is the conductor divisor in $E_i$. There are two situations in the gluing process: either $\nu(A_i) = \nu(A_{3-i})$ and $\nu(B_i) = \nu(B_{3-i})$, or $\nu(A_i) = \nu(B_{3-i})$. 

In the former situation, the two $\bP^1$-fibrations $E_i\to \bP^1$ glue to a fibration $E\to \bP^1$ with a general fiber being a union of two $\bP^1$'s at a point. Denote by $L_E$ the pull-back of $\cO_{\bP^1}(1)$ to $E$. Then applying Proposition \ref{prop:Pic-lc} as before, after an \'etale base change $(x'\in X') \to (x\in X)$ we have a factorization $Y'=Y\times_X X'\xrightarrow{g'} Z'\to X'$  such that $g'$ contracts $E$ to a rational curve $\Gamma'$ over $x\in X$. Moreover, $Z'$ has transversal $A_2$-singularities at a general point $z'\in \Gamma'$. Thus by Theorem \ref{thm:nv-crepant} we have 
\[
\hvol(x,X) = \hvol(x', X') < \hvol(z', Z')  = 9.
\]
%we can do a similar normal bundle argument showing that if $B_1$ and $B_2$ glued to $C$ then we can contract both to $C$ and have a $A_2\times \bC$-singularity. 

In the latter situation, denote by $C_i:= \nu(B_i)$ for $i=1,2$ and $y:= C_1\cap C_2$. We will follow similar arguments to Case 2 of the proof of Proposition \ref{prop:crex=1}. Then we have exact sequences
\[
0 \to \cT_{C_i} \to \cT_{\oE}|_{A_{3-i}} \oplus \cT_{\oE}|_{B_{i}} \to \cE_i\to 0,
\]
where $\cE_i$ is a subsheaf of $\cT_Y|_{C_i}$ whose quotient is torsion. At $y$, under suitable local analytic coordinates $(x_1,\cdots, x_4)$ we can write $E = V(x_3(x_1x_2 -x_3))$ and $C_i = V(x_2, x_3, x_4)$ for a fixed $i\in \{1,2\}$. Write $Y = V(f)$ where $ f= x_3(x_1x_2 -x_3) h + x_4 g$ such that $x_4$ does not appear in $h$. Since $C\not\subset Y_{\sing}$ we know that $g|_{C_i} \neq 0$ which gives $l_i:=\ord_y (g|_{C_i}) \in \bZ_{\geq 0}$. Then locally at $y$ we have $\cE_i = \langle \partial_{x_1}, \partial_{x_2}, x_1\partial_{x_3} \rangle$ which implies that a local generator of $\det \cE_i$ at $y$ is given by $\partial_{x_1}\wedge\partial_{x_2}\wedge x_1\partial_{x_3} = x_1 \partial_{x_1}\wedge\partial_{x_2}\wedge \partial_{x_3}$. Thus the natural pairing of $\det\cE_i$ and $\omega_Y|_{C_i}$ leads to a duality between $(\det\cE_i)\otimes \cO_{C_i}((1-l_i) y)$ and $\omega_Y|_{C_i}$ at $y$. Similarly, at each point $y_{i,j}'\in Y_{\sing}\cap (C_i \setminus \{y\})$ we have $l_{i,j}'\in \bZ_{>0}$ such that  $(\det\cE_i)\otimes \cO_{C_i}(-l_{i,j}' y_{i,j}')$ and $\omega_{Y}|_{C_i}$ are dual to each other at $y_{i,j}'$. Combining these together, we get $(\det \cE_i) \otimes \cO_{C_i}((1-l_i)y - \sum_j l_{i,j}' y_{i,j'}) \cong \omega_Y^{\vee}|_{C_i}$  which implies that 
\begin{align*}
 0 & = \deg \omega_Y^{\vee}|_{C_i} = \deg (\det\cE_i) \otimes \cO_{C_i} ( (1-l_i) y - \sum_{j} l_{i,j}' y_{i,j}')\\
 & = \deg \cE_i + 1-l_i - \sum_{j} l_{i,j}' \\
 &\leq \deg \cE_i + 1 = -\deg \cT_{C_i} +\deg \cT_{\oE}|_{A_{3-i}} + \deg \cT_{\oE}|_{B_{i}} + 1\\
 & = \deg \cT_{C_i} + \deg \cN_{A_{3-i}/E_{3-i}} + \deg \cN_{B_{i}/E_{i}} + 1\\
 & = 2 - a_{i} + 1  = 3-a_{i}.
\end{align*}
Thus we get $a_i\leq 3$ which implies that $(-K_E)^2 = a_1+a_2 + 4\leq 9$ unless $a_1 = a_2 = 3$ which we treat separately below.

If we have $a_1 = a_2 = 3$, then the above inequalities are equalities which implies that $l_i$ and $l_{i,j}'$ must all vanish. Thus $Y$ is smooth along $C_1$ and $C_2$ and hence smooth along $E$. In particular, $(Y, E)$ is an snc pair. Let $L:= -2E_1-E_2$ be a Cartier divisor on $Y$. Since $E_1\cong E_2 \cong \bF_3$, simple computations show that 
\[
(L\cdot A_1) = (L\cdot B_2) = 3, \qquad (L\cdot A_2) = (L\cdot B_1) = 0.
\]
In particular, $L$ is nef over $X$ such that $L|_{E_1}$ is big and $L|_{E_2}$ is not big. By the basepoint free theorem, $L$ is semiample over $X$ whose ample model gives a divisorial contraction $g: Y\to Z$ that contracts $E_2$ along the fibers and contract the negative section of $E_1$ to a point. Denote by $E_Z:= g_* E_1$. Then we know that $-2E_Z = g_*L$ which implies that $E_Z$ is $\bQ$-Cartier and $g^*E_Z = E_1+\frac{1}{2}E_2$. Since $(Y, E_1+\frac{1}{2}E_2)$ is plt, so is $(Z, E_Z)$ as $g$ is crepant. Moreover, $-E_Z=\frac{1}{2}g_*L$ is ample. Thus we know that $Z\to X$ is a plt blow-up extracting a Koll\'ar component $(E_Z, \Delta_{E_Z})$. Since $E_Z$ is normal, we have $E_Z\cong \bP(1,1,3)$. Hence we have 
\[
\hvol(x,X) \leq \hvol_{X,x}(\ord_{E_Z}) = (-K_{E_Z}-\Delta_{E_Z})^2 \leq (-K_{E_Z})^2 = \tfrac{25}{3}<9.
\]

% If $A_1$ and $B_2$ glued to $C_1$ and $A_2$ and $B_1$ glued to $C_2$, then we shall analyze the singularity of $Y$ along $C_1$ and $C_2$. Each contributes an integer $l_i'\geq 0$ which is zero if and only if $Y$ is smooth along $C_i$. Then similar argument as the normal bundle one in Case 1 yields 
% \[
% 0 = 2 + 0 - a_2 + 1 - l_1', \quad 0 = 2+0 -a_1 + 1 - l_2'.
% \]
% Thus we get $a_1 = 3-l_2'$ and $a_2 = 3-l_1'$. Clearly $(-K_E)^2 = a_1 + a_2 + 4$ which is $\leq 9$ unless $l_1'=l_2'=0$. In the latter case we have $Y$ is smooth along $E$ and both $E_1$ and $E_2$ are isomorphic to $\bF_3$. Thus we can contract $E_1$ along the fibers and yields a $\bP(1,1,3)$ with different divisor for the image of $E_2$ which would be a Koll\'ar component. Then $\hvol\leq \frac{25}{3}$.

\emph{Subcase 3.3.} If one of the component is (d1) (say $E_2$) and the other is (a2) (say $E_1$), then similar argument to Subcase 3.2 shows that if $C_1\subset E$ is the gluing of a line $\ell$ in $E_1=\bP^2$ and a negative curve $B$ in $E_2=\bF_{a;1}$ then we have an inequality
\[
0 \leq \deg \cT_{C_1} + \deg \cN_{\ell/\bP^2} + \deg \cN_{B/E_2} + 1 = 2 + 1 - a + 1 = 4 - a,
\]
which yields $a\leq 4$. Thus $(-K_E)^2 = 1+ a + 2 \leq 7$. 

\emph{Subcase 3.4.}
The only subcase left is when the components are (d1) (say $E_2\cong \bF_{a_2;1}$) and (c2) (say $E_1\cong \bF_{a_1;0}$). By embedding dimension constraints we have $a_1=2$ or $3$. 
Let $A$ and $B$ denote the fiber and negative section in $E_2$ such that $A+B$ is the conductor divisor in $E_2$. Let $\ell$ and $\ell'$ denote the lines in $E_1$ such that $\ell+\ell'$ is the conductor divisor in $E_1$,  $\nu(\ell) = \nu(B)=: C_1$ and $\nu(\ell')= \nu(A)=: C_2$. Denote by $y:= C_1\cap C_2$. 

We choose a local analytic coordinate $(x_1,\cdots, x_4)$ at $y$ such that $C_1 = V(x_2,x_3,x_4)$, $C_2 = V(x_1, x_2, x_4)$, $E_2 = V(x_2,x_4)$, and 
\[
E_1 = \begin{cases} V(x_2^2 - x_1x_3, x_4) & \textrm{ if }a_1=2;\\
V(x_2^2 - x_1x_4, x_4^2 - x_2 x_3, x_1x_3 - x_2 x_4) & \textrm{ if }a_1 = 3.
\end{cases}
\]
This is possible as $E=E_1+E_2$ is slc where $(E_2, A+B)$ is snc, hence the complete local ring $\widehat{\cO_{E,y}}$ only depends on the choice of $a_1$ by Koll\'ar's gluing theory. 
Let $\sigma: \tE_1\to E_1$ be the minimal resolution with $\tilde{\ell}\subset \tE_1$ the strict transform of $\ell$. Let $\ty:= \sigma^{-1}(y) \cap \tilde{\ell}$. In particular, we have $\tE_1\cong \bF_{a_1}$ and $\ty$ is the intersection of a fiber $\tilde{\ell}$ with the negative section $\sigma^{-1}(y)$. Thus we can choose local analytic coordinates $(u,v)$ of $\tE_1$ and write
\[
\sigma(u,v) = \begin{cases}
    (u, uv, uv^2, 0)  &  \textrm{ if }a_1 = 2;\\
    (u, uv, uv^3, uv^2) & \textrm{ if }a_1 = 3.
\end{cases}
\]
Here $C_1=\nu(\ell)$ is given by the image of $\tilde{\ell} = (v=0)$. Moreover, we know that the tangent space $\cT_{E_2, y} = V(x_2, x_4)$. %We can parametrize $E_2$ as $(x_2, x_4)  = (x_1x_3 f_2, x_1 x_3 f_4)$ where both $f_2$ and $f_4$ are formal power series in $(x_1,x_3)$.
%where the other double curve $C' = V(x_1, x_2, x_4)$.

Next, let $\cE$ be the image of $\sigma_* \cT_{\tE_1}|_{\tilde{\ell}} \oplus \cT_{E_2}|_{B} \to \cT_Y|_{C_1}$. By the expression of $\sigma$ we know that the image of  $\sigma_* \cT_{\tE_1}|_{\tilde{\ell}} $ is $\langle \partial_{x_1}, x_1 \partial_{x_2}\rangle$, and the image of  $\cT_{E_2}|_{B}$ is $\langle \partial_{x_1}, \partial_{x_3}\rangle$, which implies that $\cE = \langle\partial_{x_1}, x_1 \partial_{x_2}, \partial_{x_3}\rangle$. Thus we still have a short exact sequence
\[
0 \to \cT_{C_1}\to \sigma_* \cT_{\tE_1}|_{\tilde{\ell}} \oplus \cT_{E_2}|_{B} \to \cE \to 0.
\]
Thus a local generator of $\det \cE$ at $y$ is $\partial_{x_1}\wedge x_1 \partial_{x_2}\wedge \partial_{x_3} = x_1 \partial_{x_1}\wedge \partial_{x_2}\wedge \partial_{x_3}$. Now, let $Y = V(f)$. Since $Y$ is smooth at a general point $y'$ of $C_1$, we know that the tangent space $\cT_{Y, y'}$ contains $\cE_{y'}$ which implies that $\cT_{Y,y'} = \langle\partial_{x_1}, \partial_{x_2}, \partial_{x_3}\rangle$. In particular, we know that $f_{x_4}(y') \neq 0$ which implies that $f_{x_4}|_{C_1}\neq 0$ and hence $l : = \ord_y (f_{x_4}|_{C_1})\in \bZ_{\geq 0}$. Hence a local generator of $\omega_Y|_{C_1}$ at $y$ is given by $x_1^{-l} dx_1\wedge dx_2 \wedge dx_3$. In particular, we have a duality between $(\det\cE)\otimes \cO_{C_1}((1-l)y)$ and $\omega_Y|_{C_1}$ at $y$. Then by similar arguments as in Subcase 3.2, we have
\[
0 \leq \deg \cT_{C_1} + \deg \cN_{\tilde{\ell}/\tE_1} + \deg \cN_{B/E_2} + 1  = 2 + 0 - a_2 + 1,
\]
which yields $a_2 \leq 3$. 
% Then we look for the similar estimate as before, we get $\cE = \langle \partial_{x_1}, x_1\partial_{x_2}, \partial_{x_3}\rangle$ which implies (apply $C$ being the image of $B$)
% \[
% 0 = 2 + 0 - a_2 + 1 - l',
% \]
% where $l' \geq a_1-2$. 
Thus we get  $(-K_E)^2 = a_1+a_2+2 \leq 8$. This completes the proof.
\end{proof}

\begin{proof}[Proof of Proposition \ref{prop:crex=2}]
This follows directly from combining Propositions \ref{prop:crex=2-1}, \ref{prop:crex=2-2} and \ref{prop:crex=2-3}.
\end{proof}

\begin{comment}

% \textbf{Case 2.} $E$ is reducible. By \cite{Rei80} we know that $E$ is a reduced Gorenstein del Pezzo surface. Then the classification from \cite[Section 1.3]{Rei94} implies that either one of the component (say $E_1$) of $E$ has Picard rank $1$, or $E$ is obtained from gluing two pieces in (d0), (e), or $E$ is obtained by gluing $r\geq 2$ pieces in (d1). If $E_1$ has Picard rank $1$, we may run the $K_Y+\epsilon E_1$-MMP\footnote{\YL{but $E_1$ may not be $\bQ$-Cartier! Think about how to argue.}} over $X$ which yields a birational morphism $Y\to Y'$ that contracts $E_1$ to a point $y'$ and isomorphic elsewhere. Thus we have $\crex(y', Y') < \crex(x,X)$ so the induction works. Thus we are left with gluing pieces from (d0), (e), or (d1).

% Or, if $E$ has many components, then there should be a singular point with high embedding dimension, which would violate the fact that $Y$ has hypersurface singularities.

% Actually, if $E$ has $\geq 3$ components, one can run MMP (after $\bQ$-factorization) to contract both $E_1$ and $E_2$ while $E_3$ remains, and then the resulting 3fold  $Y'$ either has a singularity with $\crex(y',Y')>0$ or $Y'$ is singular along a curve with $A_{\geq 2}$ surface singularities, which implies that $\hvol(Y')\leq 9$ and hence is OK. Thus the only case left is when $E$ has $2$ components.\footnote{\YL{not quite correct, $Y'$ may have  $A_{1}$-sing along two curves. Need more argument}}

\textbf{Case 2.} $E$ is not Cartier. 

If $E$ has index $\geq 3$ at $y$ then we automatically get $\hvol(x,X) \leq \hvol(y, Y) \leq 9$. Thus we may assume $E$ has index $2$. Then a simple argument shows that $E$ contains a (possibly disconnected) smooth curve $\Gamma$ along which $Y$ has $A_1\times \bC$-singularities. 

We can try to prove a weak version. Assume that we have a bound on the $\mult(x,X)$ or equivalently $\edim(x,X) - 1$. Then we can take a general hyperplane section $x\in H$ to get a Gorenstein elliptic surface singularity. Then the minimal resolution of $x\in H$ is either a log resolution or $x\in H$ is a triangle singularity. 

In the triangle singularity case, we have a description on the fundamental cycle on the log resolution (see \cite{Yau80}), which can lead to bound on $\hvol$ except now $A(E)>1$.

In the log resolution case, we can assume the fundamental cycle $Z$ is reduced (i.e.\ cusp singularities) as otherwise we can bound $\hvol(E) \leq \frac{1}{8}\mult(x, H)$ using $c\ord_{\fm}\leq \ord_{E}$ if $\ord_{E}(Z) = c\geq 2$. Thus it remains to study the case of cusp singularities.

%\YL{Hope to classify $(E,\frac{1}{2}C)$ just like \cite{Rei94}.}

\end{proof}

\end{comment}

\subsection{General cases}

\begin{proof}[Proof of Theorem \ref{thm:nv-can}]
%In the above proof, we may need to find a better strategy, as bounding mult does not pass over to partial resolutions. Below is a new outline.
We proceed by induction on $\crex(x,X)$. If $\crex(x,X) \leq 2$, then the result follows from Propositions \ref{prop:crex=1} and \ref{prop:crex=2}. Now assume that Theorem \ref{thm:nv-can} is true for $\crex(x,X) \leq d-1$ with $d \geq 3$. Assume $\crex(x,X) = d$ from now on.

Firstly, we reduce to the case when $x\in X$ is $\bQ$-factorial. If not, let $\hX\to X$ be a small $\bQ$-factorialization that is isomorphic away from $x$. %This is possible as $X$ has quotient singularities in codimension $2$.  
Then $\hX$ has cDV singularities, as otherwise we can find $\hx\in \hX$ over $x$ such that $0<\crex(\hx,\hX)\leq \crex(x,X) = d$ and the new singularity  $\hx\in \hX$ is $\bQ$-factorial and $\hvol(x,X) <\hvol(\hx, \hX)$ by  Theorem \ref{thm:nv-crepant}. Let $\cup_{i=1}^l C_i$ be the exceptional curves of $\hX \to X$. Then $\hX_{\sing}$ will contain some $C_i$ as otherwise $\crex(x,X) = 0$. If $\hX$ has transversal $A_{\geq 2}$, $D_k,$ or $ E_k$-singularity along some $C_i$, then by Theorem \ref{thm:nv-crepant} we have 
\[
\hvol(x, X) < \hvol(\hx, \hX) \leq 9
\]
for a general point $\hx\in C_i$.
If $\hX$ has transversal $A_1$-singularity along some $C_i$, then by Proposition \ref{prop:Pic-flop} after an \'etale base change $(x'\in X')\to (x\in X)$ we can contract $C_i'=C_i\times_X X'$ only in $\hX':=\hX\times_X X'$, hence creating a new singularity $z\in Z'$ with $\crex(z, Z') = 1$. By Proposition \ref{prop:crex-analytic} we have $\crex(x', X') = \crex(x,X) >1$ which implies that $z$ is in the exceptional locus of the crepant birational morphism $Z'\to X'$. 
Thus by Theorem \ref{thm:nv-crepant} and Proposition \ref{prop:crex=1} we have
\[
\hvol(x, X) = \hvol(x', X') < \hvol(z, Z') \leq 9. 
\]

Next, assume $x\in X$ is $\bQ$-factorial. Let $Y\to X$ be a maximal crepant model as in \cite[Lemma 3.2]{LX19}. Let $E_1\subset Y$ be a crepant exceptional prime divisor over $x\in X$. Then running $(K_Y+\epsilon E_1)$-MMP on $Y/X$ for $0<\epsilon\ll 1$ we get $Y\dashrightarrow Y^+ \to Y'$ over $X$ where $Y\dashrightarrow Y^+$ is a composition of flips and $Y^+\to Y'$ only contracts the birational transform of $E_1$. Since $Y^+$ is also a maximal crepant model over $X$, we may replace $Y$ by $Y^+$. 

If $E_1$ is contracted to a point $y'\in Y'$, then we have $\crex(y', Y')=1$. Thus by Theorem \ref{thm:nv-crepant} and Proposition \ref{prop:crex=1} we have
\begin{equation}\label{eq:nv-compare}
\hvol(x,X) < \hvol(y', Y') \leq 9. 
\end{equation}

If $E_1$ is contracted to a curve $\Gamma\subset Y'$, then we know that $Y'\to X$ has exceptional locus of pure codimension $1$ by \cite[Corollary 2.63]{KM98} which implies that $\Gamma$ is contained in some crepant exceptional divisor $E_2$ in $Y'$ over $X$. Then we run $(K_{Y'} + \epsilon E_2)$-MMP to get $Y'\dashrightarrow {Y'}^+ \to Y''$ where $Y'\dashrightarrow {Y'}^+$ is a composition of flips and ${Y'}^+\to Y''$ only contracts the birational transform of $E_2$. In each step of the flips from $Y'\dashrightarrow {Y'}^+$, we cannot contract $\Gamma$ as otherwise we create a singularity $z\in Z$ with $\crex(z,Z) = 1$ which implies $\hvol(x,X)<9$ by the same argument as in \eqref{eq:nv-compare}. Thus $\Gamma$ survives as a curve $\Gamma^+$ in ${Y'}^+$ which is contained in $E_2^+$. If the image of $\Gamma^+$ in $Y''$ is a point $y''$ while $E_2^+$ gets mapped to a curve, then we get $\crex(y'', Y'') = 1$ and we are done by the same argument as in \eqref{eq:nv-compare}. If the image of $\Gamma^+$ in $Y''$ is a curve $\Gamma''$, then we know that $E_2^+$ is also mapped onto $\Gamma''$. Thus there are two crepant exceptional divisors over the generic point of $\Gamma''$. As a result, for a general point $y''\in \Gamma''$ we know that $y''\in Y''$ is a transversal $A_{\geq 2}$, $D_k$, or $E_k$-singularity, which implies that 
\[
\hvol(x, X) < \hvol(y'', Y'')\leq 9.
\]
The last possibility is that $E_2^+$ gets contract to a point $y''\in Y''$. Then we have created a singularity with $\crex(y'', Y'') =2$. Thus by Theorem \ref{thm:nv-crepant} and  Proposition \ref{prop:crex=2} we have 
\[
\hvol(x, X) < \hvol(y'', Y'')\leq 9.
\]
The proof is finished.
%We have $(Y, E_1+E_2)$ log canonical. Let $\tY \to Y$ be a $\bQ$-Cartierization of $E_1$ such that $\tE_1$ is ample over $Y$. We know $\tE_1$ is Cartier as otherwise there is a curve contained in $E_1$ along which $\tY$ has $A_1\times \bC$-singularities, a contradiction. By adjunction we have $K_{\tE_1}$ is ample over $E_1$. \YL{to be completed}
\end{proof}

\begin{proof}[Proof of Theorem \ref{thm:nv-3fold}]
    The ``if'' part is a direct consequence of Example \ref{expl:quotient} and Proposition \ref{prop:nv-hyp}.
    For the ``only if'' part, let $(\tx\in \tX)\to (x\in X)$ be the index $1$ cover of $K_X$ whose Cartier index is denoted by $d$. Then by Theorem \ref{thm:finite-deg} we have 
    \[
    \hvol(\tx, \tX) = d\cdot \hvol(x,X)\geq 9d.
    \]
    If $d=1$, i.e.\ $x\in X$ is Gorenstein, then by Theorem \ref{thm:crex-cDV} we know that  $x\in X$ is either a hypersurface singularity or a cyclic quotient singularity of type $\frac{1}{3}(1,1,1)$. The hypersurface singularity case follows from Proposition \ref{prop:nv-hyp}. If $d\geq 2$, then we have $\hvol(\tx, \tX) \geq 18$ which implies that $\tx\in \tX$ is smooth and $d \leq 3$ by Theorem \ref{thm:ODP-gap}. Thus $x\in X$ is a cyclic quotient singularity of order $2$ or $3$. If $d=2$, then $x\in X$ is of type $\frac{1}{2}(1,1,1)$ as the other case $\frac{1}{2}(1,1,0)$ is Gorenstein. If $d=3$, then $x\in X$ is of type $\frac{1}{3}(1,1,0)$ or $\frac{1}{3}(1,1,2)$ as the other two cases $\frac{1}{3}(1,2,0)$ or $\frac{1}{3}(1,1,1)$ are both Gorenstein.
\end{proof}

\section{Applications and examples}

In this section, we prove Theorems \ref{thm:K-moduli} and \ref{thm:nv-mld}. We give examples of singularities whose local volumes are at least $9$. In the end, we propose some natural questions.

\begin{proof}[Proof of Theorem \ref{thm:K-moduli}]
 Let $x\in X$ be a non-smooth point.
 By \cite{Liu18} we know that $\hvol(x,X) \geq \frac{27}{64}V$.

 (1) Since $V\geq 26$, we have
 \[
 \hvol(x,X) \geq \tfrac{351}{32}\approx 10.97 > \tfrac{32}{3}.
 \]
 Thus by Theorem \ref{thm:nv-3fold} and Proposition \ref{prop:nv-cA2} we know that $x\in X$ is a hypersurface singularity of type $cA_1$ or a cyclic quotient singularity of type $\frac{1}{2}(1,1,1)$. 

 % If moreover $V\geq 34$, then 
 % \[
 % \hvol(x,X) \geq \frac{459}{32}\approx 14.34 > \frac{27}{2}.
 % \]
 % Thus $x\in X$ 

 (2) Since $V\geq 22$, we have
 \[
  \hvol(x,X) \geq \tfrac{297}{32}\approx 9.28 > 9.
 \]
 By Theorem \ref{thm:nv-3fold} and Proposition \ref{prop:non-iso-hyp}, we know that $x\in X$ is either an isolated hypersurface singularity of type $cA_1$ or $cA_2$, a transversal $A_1$-singularity, a $D_\infty$-singularity, or a cyclic quotient singularity of type $\frac{1}{2}(1,1,1)$. %Here we use the fact that cyclic quotient singularities of type $\frac{1}{2}(1,1,1)$ is not $\bQ$-Gorenstein smoothable by \cite{Sch71}.

 (3) Let $x\in X$ be a non-Gorenstein point. Since $V\geq 11$, we have
 \[
  \hvol(x,X) \geq \tfrac{297}{64}\approx 4.64 > \tfrac{9}{2}.
 \]
 Denote by $(\tx\in \tX)\to (x\in X)$ the index $1$ cover of $K_X$. Then by Theorem \ref{thm:finite-deg} we have 
 \[
 \hvol(\tx,\tX)\geq 2 \hvol(x,X) > 9. 
 \]
 Thus by Theorems \ref{thm:nv-can} and \ref{thm:nv-3fold} we know that $\tx\in \tX$ is a hypersurface singularity of type $cA_{\leq 2}$. %or a cyclic quotient singularity of type $\frac{1}{2}(1,1,1)$. In the latter case, $x\in X$ is a quotient singularity of even order at most $4$ by Example \ref{expl:quotient}. Thus either $x\in X$ is a cyclic quotient singularity, or it is a quotient singularity by the group $\bmu_2\times \bmu_2$. But the group being $\bmu_2\times \bmu_2$ cannot happen as a simple classification of $3$-dimensional representations of $\bmu_2\times \bmu_2$ shows that its action on $\bA_{\bk}^3$ cannot be free in codimension $1$ once it contains the antipodal map. 
 
 Finally, we note that cyclic quotient singularities of type $\frac{1}{2}(1,1,1)$ are not $\bQ$-Gorenstein smoothable by \cite{Sch71}. 
 %Since $x\in X$ is $\bQ$-Gorenstein smoothable, we exclude the possibilities of cyclic quotient singularities of type $\frac{1}{2}(1,1,1)$, $\frac{1}{3}(1,1,1)$, or $\frac{1}{3}(1,1,2)$ by \cite{Sch71}. Note that cyclic quotient singularities of type $\frac{1}{3}(1,1,0)$ are not $\bQ$-Gorenstein smoothable either by taking general hyperplane sections and applying the classification of $T$-singularities \cite{KSB88}. 
 Thus the proof is finished.
\end{proof}

\begin{rem}\label{rem:K-mod}
According to the author's knowledge based on \cite{SS17, LX19, ADL21, ACC+, ACD+, CDG+, CT23, DJKHQ24, LZ24, Zha24, CFFK24}, we give a list of families of smooth Fano threefolds with volume $V\geq 11$ whose K-moduli compactifications are currently unknown. We hope Theorem \ref{thm:K-moduli} can be used to study these families in the future. We follow Mori--Mukai's notation for the numbering of these families.
\begin{enumerate}
    \item $V\geq 26$: \textnumero 2.20, 2.21, 3.11;
    \item $22\leq V<26$: \textnumero 1.10, 2.16, 2.17, 3.6, 3.7, 3.8;
    \item $11\leq V<22$: \textnumero 1.6, 1.7, 1.8, 1.9, 2.5, 2.6, 2.7, 2.8, 2.9, 2.10, 2.11, 2.12, 2.13, 2.14, 3.1, 3.2, 3.3, 3.4, 3.5.
\end{enumerate}
\end{rem}

\begin{proof}[Proof of Theorem \ref{thm:nv-mld}]
Let $(\tx\in \tX)\to (x\in X)$ be an index $1$ cover of $K_X$. Denote by $d$ the Cartier index of $K_X$ at $x$. 
Then by \cite[Proposition 5.20]{KM98} and Theorem \ref{thm:finite-deg} we have
\[
\mld(\tx, \tX) \leq d\cdot \mld(x, X), \qquad \hvol(\tx, \tX) = d\cdot \mld(x,X). 
\]
Thus the inequality \eqref{eq:nv-mld} reduces to showing 
\begin{equation}\label{eq:nv-mld-cover}
    \hvol(\tx,\tX) \leq 9 \cdot\mld(\tx,\tX)
\end{equation} 
where $\tx\in \tX$ is Gorenstein canonical. 

Since $K_{\tX}$ is Cartier, we know that $\mld(\tx,\tX) \geq 1$ is a positive integer. If $\crex(\tx, \tX) = 0$, then $\mld(\tx, \tX) = 1$ and \eqref{eq:nv-mld-cover} follows by Theorem \ref{thm:nv-can} with equality if and only if $\tx\in \tX$ is a cyclic quotient singularity of type $\frac{1}{3}(1,1,1)$. If $\crex(\tx,\tX) >0$, then by Theorem \ref{thm:crex-cDV} we know that $\tx\in \tX$ is a cDV singularity. Then we have three cases: $\tx\in \tX$ is smooth, non-smooth  terminal, or non-terminal. The smooth case is obvious as $\mld(\tx,\tX) = 3$ and $\hvol(\tx,\tX) = 27$. For the terminal case, we have $\mld(\tx,\tX) = 2$ by \cite{Mar96}. Thus \eqref{eq:nv-mld-cover} follows from Theorem \ref{thm:ODP-gap} as $\hvol(\tx, \tX)\leq 16 < 18 = 9\cdot \mld(\tx,\tX)$. The last case is when $\tx\in \tX$ is not terminal. Let $\phi: \tZ\to \tX$ be a terminalization. Then we know that $\phi^{-1}(\tx)$ has dimension $1$ since $\phi$ is exceptional over $\tx$ and there are no crepant exceptional divisors over $\tx\in \tX$ by the assumption that $\crex(\tx, \tX) = 0$. Let $\eta$ be a generic point of $\phi^{-1}(\tx)$. Since $\tZ$ is terminal, we know that $\tZ$ is smooth at $\eta$.  Thus we have $\mld(\tx,\tX) \leq \mld(\eta, \tZ) = 2$. Moreover, since $\crex(\tx,\tX) = 0$ we have $\mld(\tx,\tX)>1$. Combining these together, we have $\mld(\tx, \tX) = 2$. Thus \eqref{eq:nv-mld-cover} follows again from Theorem \ref{thm:ODP-gap}.

Finally, we analyze the equality cases of \eqref{eq:nv-mld}. From the above proof, we see that the equality in \eqref{eq:nv-mld} holds if and only if $\mld(\tx, \tX) = d\cdot \mld(x,X)$ and $\hvol(\tx,\tX) = 9\cdot \mld(\tx, \tX)$. From the above arguments, the latter equality holds if and only if $\tx\in \tX$ is either smooth or a cyclic quotient singularity of type $\frac{1}{3}(1,1,1)$. In particular, $x\in X$ is a quotient singularity. Denote by $\pi: (x'\in X')\to (x\in X)$ the finite quotient morphism where $x'\in X'$ is smooth. Let $G$ be the Galois group of $X'/X$. Then we have $\hvol(x,X) = \frac{27}{|G|}$ which implies $\mld(x,X) = \frac{3}{|G|}$. Suppose $\mld(x,X)$ is computed by a prime divisor $E$ over $x$. Let $\mu: Y\to X$ be a proper birational morphism where $Y$ is normal and contains $E$ as a prime divisor. Let $Y'$ be the normalization of the base change $Y\times_X X'$ and $\mu': Y'\to X'$ the induced proper birational morphism. Then we have a $G$-action on $Y'$ such that the morphism $\pi_Y: Y'\to Y$ is finite and $G$-equivariant. Let $E'$ be an irreducible component of $\pi_Y^* E$. By \cite[Proof of Proposition 5.20]{KM98}, we have $A_{X'}(E') = r A_X(E)$ where $r\leq |G|$ is the ramification index of $\pi_Y$ along $E'$. Thus we have
\[
3 = \mld(x', X') \leq A_{X'}(E') = r A_X(E) = r\cdot  \mld(x,X) = r\cdot \tfrac{3}{|G|}\leq 3,
\]
which implies that $E'$ computes $\mld(x', X')$ and $r = |G|$. It is well-known that the only divisor computing the $\mld$ of a smooth point is the exceptional divisor of the usual blow-up. Thus by changing the birational models $Y$ and $Y'$ we can assume that $Y' = \Bl_{x'} X'$ and $Y = Y'/G$, where $E$ and $E'$ are the unique exceptional divisors of $\mu$ and $\mu'$ respectively. Then $r = |G|$ implies that the $G$-action on $E'\cong \bP^2$ is trivial. In particular, the $G$-action on $x'\in X'$ in suitable local analytic coordinates $(x_1,x_2,x_3)$ only contains multiplication by scalars. Thus $x\in X$ is a cyclic quotient singularity of type $\frac{1}{r}(1,1,1)$. Conversely, if $x\in X$ is of type $\frac{1}{r}(1,1,1)$, then $x\in X$ is locally analytically isomorphic to the affine cone over $\bP^2$ with polarization $\cO(r)$. Hence by taking the blow-up at the cone point we get a log resolution of $x\in X$ with a unique exceptional divisor $E$ such that $\mld(x,X) = A_X(E) = \frac{3}{r} = \frac{\hvol(x,X)}{9}$. Thus the proof is finished.  
\end{proof}

\begin{expl}\label{expl:ADE}
\begin{enumerate}
    \item If $x\in X$ is an $A_k$-singularity, that is, locally analytically given by $x_1 x_2 + x_3^2 + x_4^{k+1} = 0$, then its local volume is $16$ when $k=1$, $\frac{125}{9}$ when $k=2$, and $\frac{27}{2}$ when $k\geq 3$. See \cite[Section 5]{Li18}, \cite[Example 4.7]{LL19} and \cite[Example 7.1.2]{LX20}.
\item If $x\in X$ is a $cA_1$-singularity but not an $A_k$-singularity, then $x\in X$ has to be a transversal $A_1$-singularity with $\hvol(x,X) = \frac{27}{2}$.
\item If $x\in X$ is defined by $x_1 x_2 + x_3^3 + x_4^k=0$ for $3\leq k\leq 6$, then by \cite{CS15} and \cite{LST25} we have $\hvol(x,X) = \frac{4(3+k)^3}{9k^2}$. Thus we get values $\frac{32}{3}, \frac{343}{36}, \frac{2048}{225}, 9$ for $k = 3,4,5,6$. Note that  $k=3,4,5$ correspond to $D_4$, $E_6$, and $E_8$-singularities.
\item If $x\in X$ is a $D_k$-singularity with $k\geq 5$, then we have local equation $x_1x_2 + x_3^2 x_4 + x_4^{k-1} = 0$. Then by \cite[Appendix, Example 4.1]{LX18} we know that its local volume is computed by the monomial valuation $v_{\bfw}|_{K(X)}$ with weight $\bfw=(1, 1, \sqrt{3}-1, 4-2\sqrt{3})$ whose K-semistable Fano cone degeneration is precisely the $D_\infty$-singularity $x_1x_2 + x_3^2 x_4 = 0$. Thus we have $\hvol(x, X) = 6\sqrt{3}$ which is the same as the local volume of a $D_\infty$-singularity.
\item If $x\in X$ is an $E_7$-singularity, then we have local equation $x_1x_2 + x_3^3 + x_3 x_4^3 =0$. By \cite[Section 8]{Li18} and \cite[Example 7.1.3]{LX20} we know that $\hvol(x, X) = \frac{250}{27}$.
\end{enumerate}

\end{expl}

We propose the following questions for future study.

\begin{que}\label{que:vol-list}
Let $\hVol_3$ denote the set of  local volumes of all klt threefold  singularities $x\in X$. By \cite{XZ24} (see also \cite{Zhu24, LMS23}) we know that $0$ is the only accumulation point of $\hVol_3$. Moreover, Theorem \ref{thm:nv-3fold} implies that if $\hvol(x,X)\in \hVol_3 \cap [9, 27]$ then $x\in X$ is either a quotient singularity of order at most $3$ or a hypersurface singularity of type $cA_{\leq 2}$.
 From Example \ref{expl:ADE} we have the following containment
\[
    \hVol_3 \cap [9, 27] \supset \left\{9, \tfrac{2048}{225}, \tfrac{250}{27}, \tfrac{343}{36}, 6\sqrt{3}, \tfrac{32}{3}, \tfrac{27}{2}, \tfrac{125}{9}, 16, 27\right\}.
\]
Is this containment  an equality?
\end{que}

\begin{que}
Let $x\in X$ be the anti-canonical affine cone over $\bF_1$ (which is Gorenstein canonical). Then by \cite[Section 8.3]{Blu18} we have $\hvol(x,X) = \frac{46+13\sqrt{13}}{12}\approx 7.74$. 
Is this the largest local volume among all klt threefold singularities that are not LCIQ, i.e.\ (finite, quasi-\'etale) quotients of local complete intersection singularities?
\end{que}

\begin{que}
Does a similar inequality to \eqref{eq:nv-mld} hold for klt threefold pairs? 
\end{que}

\bibliography{ref}

\end{document}